\numberwithin{equation}{section}
\newcommand{\var}{\operatorname{var}}
\newcommand{\Int}{\operatorname{int}}
\newcommand{\esup}{\operatorname{ess\,sup}}
\newcommand{\elim}{\operatorname{ess\,lim}}
\newcommand{\Reg}{\operatorname{Reg}}
\newcommand{\fr}{\operatorname{frac}}
\def\j{R}
\def\eq{equation}
\def\tk{\widetilde{K(r)}}
\def\ep{\varepsilon}
\def\vphi{\varphi}
\def\Si{\Sigma}
\def\s{\sigma}
\def\o{\omega}
\def\Sn{\Sigma_n}
\def\St{\Sigma_*}
\def\Sb{\Sigma_b}
\def\o{\omega}
\def\O{\Omega}
\def\r{\rho}
\def\co{\overline{O}}
\def\<{\langle}
\def\>{\rangle}
\def\E{\mathbb{E}}
\def\N{\mathbb{N}}
\def\P{\mathbb{P}}
\def\R{\mathbb{R}}
\def\rd{\mathbb{R}^d}
\def\CB{\mathcal{B}}
\def\K{\mathcal{K}}
\def\L{\mathcal{L}^d}
\def\F{\mathcal{F}}
\def\H{\mathcal{H}}
\def\C{\mathcal{C}}
\def\PR{\mathcal{PR}}
\def\1{\mathbf{1}}
\newcommand{\const}{\operatorname{const}}
\newcommand{\Sim}{\operatorname{Sim}}
\newcommand{\nor}{\operatorname{nor}}
\newcommand{\rea}{\operatorname{reach}}
\newcommand{\dist}{\operatorname{dist}}
\newcommand{\INt}{\operatorname{int}}
\newcommand{\conv}{\operatorname{conv}}
\newcommand{\reach}{\operatorname{reach}}
\newcommand{\Tan}{\operatorname{Tan}}
\newcommand{\Nor}{\operatorname{Nor}}
\def\tit{\textit}
\theoremstyle{plain}
   \newtheorem{thm}{Theorem}[section]
   \newtheorem{thms}{Theorem}[section] 
  \newtheorem{lems}[thms]{Lemma}
   \newtheorem{props}[thms]{Proposition}
\theoremstyle{remark}
\newtheorem*{remark*}{Remark}
\theoremstyle{definition}
   \newtheorem{rems}[thms]{Remark}
\newtheorem{ex}[thms]{Example}
\begin{document}

\title{Mean Lipschitz-Killing curvatures for homogeneous random fractals
}

\author{Jan Rataj}\curraddr{\it Jan Rataj, Charles University, Faculty of Mathematics and Physics, Soko\-lovs\-k\'a 83, 186 75 Praha 8, Czech Republic}
\author{Steffen Winter}\curraddr{\it Steffen Winter, Institute of Stochastics, Karlsruhe Institute of Technology, Englerstr. 2, D-76131 Karlsruhe, Germany}
\author{Martina Z\"ahle}\curraddr{\it Martina Z\"ahle, Institute of Mathematics,  University of Jena, Ernst-Abbe-Platz 2, D-07743 Jena, Germany.}

\begin{abstract}
Homogeneous random fractals form a probabilistic extension of self-similar sets with more dependencies than in random recursive constructions. For such random fractals we consider mean values of the Lipschitz-Killing curvatures of their parallel sets for small parallel radii. Under the Uniform Strong Open Set Condition and some further geometric assumptions we show that rescaled limits of these mean values exist as the parallel radius tends to $0$. Moreover, integral representations are derived for these limits which extend those known in the deterministic case.
\end{abstract}

\thanks{Steffen Winter was supported by DFG grant WI 3264/5-1 and Martina Z\"ahle by DFG grant ZA 242/8-1.}

\subjclass[2010]{Primary: 28A80, 60G57; Secondary: 28A75, 53C65, 60D05}



\keywords{homogeneous random fractals, fractal curvatures, mean values}

\maketitle

\section{Introduction}\label{intro}

Fractal versions of the $k$-th order Lipschitz-Killing curvatures $C_k$ in $\rd$ known from convex geometry, differential geometry and geometric measure theory have been considered in \cite{Wi08,WZ13} and subsequent papers for deterministic self-similar sets and in \cite{Za11} for random recursive constructions. Compared to the latter class of models, homogeneous random fractals, as first considered in \cite{Ha92} (for a special case), possess more dependencies in their structure. This leads, in particular, to the phenomenon that their a.s.\ Minkowski dimension, which agrees again with the a.s.\ Hausdorff dimension, is determined by a different equation than in the recursive case (see \cite{RU11} together with \cite{Tr17} for the general case). Moreover, in \cite{Tr21a} it is shown that in the non-deterministic case there exists no gauge function for an exact Hausdorff measure. It was conjectured in \cite{Za20} that the almost sure Minkowski content does not exist either. Meanwhile this has been proved in \cite{Tr21b}. In contrast to this striking difference in the almost sure behaviour of both models, it turned out that for expectations the results are the same as in the case of stochastically self-similar sets, i.e., random recursive constructions (see \cite{Za20}). 

The Minkowski content may be viewed as the marginal case $k=d$ of the fractal Lipschitz-Killing curvatures.  It is the aim of the present paper to extend the above mentioned results regarding Minkowski contents of homogeneous random fractals $F$ to all Lipschitz-Killing curvatures.


For a deterministic self-similar set $F$ satisfying the open set condition and some additional assumptions, Lipschitz-Killing curvatures can be introduced by approximation with parallel sets. If $r>0$ is a regular value of the distance function $d(\cdot,F)$ of $F$, then the $r$-parallel set $F(r):=\{x:\, d(x,F)\leq r\}$ has `nice' geometric properties. (More precisely, its boundary is a Lipschitz manifold and the closure of its complement has positive reach.) In particular, $F(r)$ admits Lipschitz-Killing curvature measures $C_k(F(r),\cdot)$, $k=0,\dots,d-1$. We use the notation $C_k^{\var}(F(r),\cdot)$ for the variation measures, and $C_k(F(r)):=C_k(F(r),\R^d)$ for the total mass.
Provided that almost all $r>0$ are regular values (which is always the case if $d\leq 3$, see \cite{Fu85}),
the (total) \emph{fractal Lipschitz-Killing curvatures} of $F$ can be introduced as rescaled (essential or averaged) limits
$$
\elim_{\ep\rightarrow 0}\limits\ep^{D-k}\,C_k(F(\ep)) \qquad \text{ or }\qquad \lim_{\delta\rightarrow 0}\frac{1}{|\ln\delta|}\int_\delta^1\ep^{D-k}\, C_k(F(\ep))~\ep^{-1}d\ep,
$$
where $D$ is the Minkowski dimension of $F$.
It turned out that the first one (essential limit) exists in the case of non-lattice self-similar sets, while the second one (averaged limit) exists in general, see \cite{Wi08} and \cite{Za11} for details.

In order to give some idea of the results obtained in the paper, we give an informal description of the considered model, a detailed definition can be found in Section~\ref{sec:def_main_res}. Consider a random iterated function system (random IFS) $f=(f_1,\dots,f_N)$ consisting of a random number $N\geq 2$ of contracting similarities $f_1,\dots,f_N$ with (random) contraction ratios $r_1,\dots,r_N$ fulfilling $0<r_{\min}\leq r_i\leq r_{\max}<1$ for some deterministic values $r_{\min},r_{\max}$. We assume the \emph{Uniform Open Set Condition} (UOSC): there exists a (deterministic) nonempty open set $O$ such that almost surely, $f_i(O)\subset O$ and $f_i(O)\cap f_j(O)=\emptyset$, $i\neq j\leq N$ (cf.\ \eqref{UOSC}). The \emph{homogeneous random fractal} $F$ is defined by means of an i.i.d.\ random sequence $(f^n)_{n\in\N}$, where each random IFS $f^n=(f^n_1,\dots,f^n_{N^n})$ has the same distribution as $f$. Then $F$ is obtained by applying at each construction step $n\in\N$ the same chosen IFS $f^n$ to all components of that step. More precisely, denoting $\Sigma_n:=\{\sigma_1\sigma_2\dots\sigma_n:\, 1\leq\sigma_i\leq N^i, 1\leq i\leq n\}$, $n\in\N$, and $f_\sigma:=f^1_{\sigma_1}\circ\dots\circ f^n_{\sigma_n}$, $\sigma\in\Sigma_n$, the homogeneous random fractal associated to the random IFS $f$ is the random set $F$ defined by
$$
F:=\bigcap_{n=1}^\infty\bigcup_{\sigma\in\Sigma_n}f_\sigma(\overline{O}).
$$
If $\E N<\infty$ (which we assume here throughout), then the equation $\E\sum_{i=1}^N r_i^D=1$ has a unique solution $D\in[0,d]$, called the \emph{mean Minkowski dimension} of $F$. (Note that it is given by the same formula as in the random recursive case, cf.~\cite{Za20} and \eqref{minkdim} below.) We consider the measure $\mu:=\E\sum_{i=1}^N \1\{|\ln r_i|\in\cdot\} r_i^D$ and its mean value $\eta:=\E\sum_{i=1}^N |\ln r_i|\,r_i^D.$
Further, we let $\Sigma_*:=\bigcup_{n=1}^\infty\Sigma_n$, 
and for any $r>0$, we denote by $\Sigma(r)$ the set of all finite words $\sigma=\sigma_1\dots\sigma_n\in\Sigma_*$ such that $r^1_{\sigma_1}\cdots r^n_{\sigma_n}\leq \frac{r}{2|O|}< r^1_{\sigma_1}\cdots r^{n-1}_{\sigma_{n-1}}$. Here $|O|$ denotes the diameter of $O$. For $\sigma\in\Sigma_*$ let $O_\sigma:=f_\sigma(O)$. Observe that $O_\sigma$ is open, while its parallel sets $O_\sigma(r)$, $r\geq 0$, are closed.

Our main result Theorem~\ref{maintheorem} establishes the existence of fractal Lipschitz-Killing curvatures for a large class of homogeneous random fractals. The following statement is a special case, as can easily be seen from Proposition~\ref{prop:easier-cond}\,(d). It provides this existence under a stronger but easier to state hypothesis.

\begin{thm} \label{thm:1.1}
Let $F\subset\R^d$ be a homogeneous random fractal with $\E N<\infty$ satisfying UOSC with some open set $O\subset\R^d$ such that $F\cap O\neq\emptyset$ almost surely. Let $k\in\{0,1,\ldots,d\}$ and assume further that
\begin{enumerate}
\item[(i)] with probability one, almost all $r>0$ are regular values of $d(\cdot,F)$;
\item[(ii)] there exists a constant $c>0$ such that with probability one,
$$C_k^{\var}\left(F(r),O_\sigma(r)\cap O_{\sigma'}(r)\right)\leq cr^k, \text{ for a.a. } r>0 \text{ and all } \sigma,\sigma'\in\Sigma(r) \text{ with } \s\neq\s'.$$ 
\end{enumerate}
Define $
F_i:=\bigcap_{n=1}^\infty\bigcup_{\sigma\in\Sigma_n, \sigma_1=i} f_\sigma(\overline{O})$ for $i\in\{1,\ldots,N\}$ and  
$$R_{k,1}(r):=\E C_k(F(r))-\E\sum_{i=1}^N\1_{(0,r_i]}(r)\, C_k\big(F_i(r)\big), \quad r>0.$$
Then,
$$\overline{C}_{k,F}^{\fr}:=\lim_{\delta\rightarrow 0}\frac{1}{|\ln\delta|}\int_\delta^1\ep^{D-k}\E\, C_k(F(\ep))~\ep^{-1}d\ep = \frac{1}{\eta}\int_0^{1} r^{D-k-1}R_{k,1}(r)\, dr.$$
Moreover, if the measure $\mu$ is non-lattice, then
$$C_{k,F}^{\fr}:=\elim_{\ep\rightarrow 0}\limits\ep^{D-k}\E\,C_k(F(\ep))
=\overline{C}_{k,F}^{\fr} .
$$
\end{thm}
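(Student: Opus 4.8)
The plan is to reduce the assertion to a classical renewal theorem. The starting point is the first-level decomposition $F=\bigcup_{i=1}^N F_i$, where $F_i=f^1_i(F^{(1)})$ and $F^{(1)}$ denotes the homogeneous random fractal generated by the shifted sequence $(f^{n+1})_{n\in\N}$. Crucially $F^{(1)}\deq F$, and $F^{(1)}$ is independent of the first IFS $f^1=(f_1,\dots,f_N)$. Since each $f_i$ is a similarity of ratio $r_i$, distances scale by $r_i$ and the curvature measures are $k$-homogeneous, so that $F_i(r)=f_i\big(F^{(1)}(r/r_i)\big)$ and $C_k(F_i(r))=r_i^k\,C_k\big(F^{(1)}(r/r_i)\big)$. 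Writing $z(r):=\E C_k(F(r))$, conditioning on $f^1$, and using the independence and equidistribution of $F^{(1)}$, the defining identity for $R_{k,1}$ turns into the renewal relation
$$z(r)=R_{k,1}(r)+\E\sum_{i=1}^N\1_{(0,r_i]}(r)\,r_i^k\,z(r/r_i),\qquad r>0.$$
It is essential here that we pass to expectations: although the homogeneous construction reuses the \emph{same} $F^{(1)}$ in every branch, linearity of $\E$ makes the dependence between branches irrelevant, which is exactly why mean values behave as in the recursive case.

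Next I would pass to the logarithmic scale. Setting $r=e^{-t}$, $g(t):=e^{-t(D-k)}z(e^{-t})$ and $\psi(t):=e^{-t(D-k)}R_{k,1}(e^{-t})$, and exploiting $r^{D-k}=(r/r_i)^{D-k}r_i^{D-k}$, the relation above becomes the renewal equation
$$g(t)=\psi(t)+\int_0^t g(t-s)\,\mu(ds),$$
with exactly the measure $\mu=\E\sum_{i=1}^N\1\{|\ln r_i|\in\cdot\}\,r_i^D$ from the statement. By the definition of $D$ one has $\mu(\R)=\E\sum_i r_i^D=1$, so $\mu$ is a probability measure of mean $\eta$, and the unique locally bounded solution is $g=\sum_{n\ge0}\mu^{*n}*\psi$. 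The Cesàro form of the renewal theorem then yields, for \emph{every} $\mu$,
$$\lim_{T\to\infty}\frac1T\int_0^T g(t)\,dt=\frac1\eta\int_0^\infty\psi(s)\,ds,$$
while its Blackwell/key form gives $\lim_{t\to\infty}g(t)=\frac1\eta\int_0^\infty\psi(s)\,ds$ when $\mu$ is non-lattice. Undoing the substitution $\ep=e^{-t}$ identifies the left-hand side of the first display with $\overline{C}_{k,F}^{\fr}$ and of the second with $C_{k,F}^{\fr}$ (the latter as an \emph{essential} limit, since by (i) $g$ is only defined for a.a.\ $t$), and converts $\frac1\eta\int_0^\infty\psi(s)\,ds$ into $\frac1\eta\int_0^1 r^{D-k-1}R_{k,1}(r)\,dr$, the asserted value.

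The main obstacle is verifying the hypotheses of the renewal theorem, namely that $\psi$ is directly Riemann integrable; this is precisely where (i) and (ii) enter. Assumption (i) guarantees that $z(r)$ is defined for a.a.\ $r$ and, via additivity of the curvature measures together with a uniform bound on the boundedly many $\Sigma(r)$-pieces meeting a given $r$-ball, that $g$ is bounded and $\psi$ a.e.\ continuous on compacts. For the decay of $\psi$ as $t\to\infty$ I would bound $R_{k,1}$: for small $r$ all indicators equal one and the correction is the expected \emph{coarse} interface contribution carried by $C_k(F(r),\cdot)$ on $\bigcup_{i<j}\big(F_i(r)\cap F_j(r)\big)$, the finer interfaces being absorbed recursively into the terms $z(r/r_i)$. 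Covering each $F_i(r)\cap F_j(r)$ by finest-scale overlaps $O_\sigma(r)\cap O_{\sigma'}(r)$ with $\sigma,\sigma'\in\Sigma(r)$, $\s\neq\s'$, one estimates the localized variation by $cr^k$ per piece using (ii); since by the UOSC only boundedly many pieces meet any $r$-ball and only those clustering near the first-level interface contribute, one obtains $r^{D-k}|R_{k,1}(r)|\to0$ exponentially in $t$, hence $\int_0^1 r^{D-k-1}|R_{k,1}(r)|\,dr<\infty$ and direct Riemann integrability of $\psi$. The delicate point is exactly this comparison between the level-one correction $R_{k,1}$ and the scale-$r$ bound (ii): the two decompositions live at different scales, and making the resulting estimate uniform in $r$ and integrable against $r^{D-k-1}\,dr$ near $0$ — which hinges on the interface being of dimension strictly below $D$ — is the heart of the proof.
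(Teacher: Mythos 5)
Your overall strategy---passing to expectations so that the reuse of the same $F^{(1)}$ in every branch becomes irrelevant, deriving the renewal equation for $g(t)=e^{-t(D-k)}\E C_k(F(e^{-t}))$ with the measure $\mu$, and invoking the key renewal theorem together with its Ces\`aro form---is exactly the route the paper takes. The reductions up to and including the renewal equation are correct, modulo two points you gloss over: the null-set bookkeeping for the arguments $r$ at which $C_k(F(r))$ is defined (handled in the paper via the set $T$ and Lemma~\ref{ts}), and the Lebesgue-a.e.\ continuity of $t\mapsto\E z_k(t)$, which rests on the nontrivial weak continuity of curvature measures at regular values (Theorem~\ref{P_cont} in the Appendix), not merely on additivity.

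The genuine gap is in the decay estimate for $R_{k,1}$. Your covering argument with condition (ii) gives a bound $cr^k$ per overlap piece, and the number of pieces $\sigma\in\Sigma(r)$ contributing near the first-level interface is of order $\E\,\sharp(\Sigma_b(r))$; this yields only $|R_{k,1}(r)|\lesssim r^{k}\,\E\,\sharp(\Sigma_b(r))$. Since a volume count gives only $\sharp(\Sigma(r))\lesssim r^{-d}$ and, a priori, $\E\,\sharp(\Sigma_b(r))$ could be as large as $\const\cdot r^{-D}$, this produces $r^{D-k}|R_{k,1}(r)|=O(1)$, which is \emph{not} enough for direct Riemann integrability of $\psi$: you need a strictly positive exponent gain, i.e.\ $\E\,\sharp(\Sigma_b(r))=O(r^{-D+\delta})$ for some $\delta>0$. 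You assert that this "hinges on the interface being of dimension strictly below $D$" but give no argument, and indeed none is possible from UOSC alone: this is precisely where the \emph{strong} open set condition $F\cap O\neq\emptyset$ must enter. The paper's proof constructs a stopping set $\Sigma(\rho,\alpha)$ of words whose cylinders stay $\alpha$-deep inside $O$, defines $\delta$ by $\E\sum_{\tau\in\Sigma(\rho)\setminus\Sigma(\rho,\alpha)}r_\tau^{D-\delta}=1$ (which requires $\P(\Sigma(\rho,\alpha)\neq\emptyset)>0$, i.e.\ USOSC), and runs a separate sub-renewal argument for $r^{D-\delta}\E\,\sharp(\Xi(r))$ to establish the bound \eqref{bnumber}. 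Without this, your claim that $r^{D-k}|R_{k,1}(r)|$ decays exponentially is unsupported, and the convergence of $\int_0^1 r^{D-k-1}|R_{k,1}(r)|\,dr$ is exactly what remains to be proved. A secondary, fixable issue: working directly with the cutoff $L=1$ makes the boundary terms $\1_{(r_i,1]}(r)\,C_k(F_i(r))$ awkward, since $r/r_i$ need not exceed $\sqrt2\,|F^{(1)}|$ and the large-radius reach bound of Lemma~\ref{lems:large distances} does not apply; the paper first proves everything for $L=R>\sqrt2|O|$ and then transfers to arbitrary $L>0$ by the separate computation in Lemma~\ref{lem:R2}.
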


As in the deterministic case, the values $\overline{C}_{k,F}^{\fr}$ (and $C_{k,F}^{\fr}$, in case they exist) can be interpreted as the \emph{mean (averaged) fractal Lipschitz-Killing curvatures} of $F$ of order $k$. (Note that in contrast to common differential geometric terminology in our paper the term `mean' curvatures is used in the probabilistic sense of expectations.) For the cases $k=d-1$ and $k=d$, conditions (i) and (ii) are in fact not needed.

Our result covers the case of deterministic self-similar sets considered in \cite{Wi08} and \cite{Za11}. As a nontrivial example, we discuss below a family of random homogeneous fractals, for which the associated random IFS choose between two deterministic IFS generating Sierpi\'nski type gaskets, see Section~\ref{sec:ex}. In this case, the values of $C_{k,F}^{\fr}$ are determined explicitly.

The plan of the paper is as follows.
In Section~\ref{sec:curv parallel sets} we give a brief survey on the notion of curvature measures of parallel sets with references to the literature. The measurability and continuity properties in Lemma \ref{lem:measurability} and Lemma \ref{lem: contcurvmeas} are used in the sequel for the probabilistic approach and application of the classical Renewal theorem. But they are also of independent interest, e.g.\ for possible extensions to other classes of random fractals as considered in \cite{RU11}, \cite{BHS12}, and \cite{Tr17}.

Section \ref{sec:def_main_res} contains the construction of homogeneous random fractals and the statement of the main result (Theorem \ref{maintheorem} together with Remarks~\ref{rem:partial}, \ref{rem:R} and \ref{rem:det-and-rand-rec-case}).
The proof of Theorem~\ref{maintheorem} in Section \ref{proofs} is split into several steps. As in the case of self-similar fractals and random recursive constructions the Renewal theorem is a main tool. Most effort is needed for verifying the corresponding conditions. It turns out that the integral representations for the above limits derived here for homogeneous random fractals are very similar to those known for (random) self-similar sets.

In Section \ref{sec:ex}, the hypothesis of Theorem \ref{maintheorem} is replaced in part by a simpler but stronger condition, cf.\ Proposition \ref{prop:easier-cond} and Theorem~\ref{thm:1.1}, which is easier to verify in concrete examples.
This is then demonstrated for a family of homogeneous random Sierpi\'nski gaskets as mentioned above.

In the Appendix 
the continuity of the curvature measures of parallel sets at regular distances is proved, cf.\ Theorem \ref{P_cont}, which is of independent interest. It completes, in particular, the proof of Lemma 2.3.4 in \cite{Za11} and can be applied to other deterministic and stochastic models.

\section{Curvature measures of parallel sets}\label{sec:curv parallel sets}
\subsection{Definitions and relevant properties}\label{sec:def curv}
For certain classes of compact sets $K\subset\rd$ (including many classical singular sets) it turns out that for Lebesgue-almost all distances $r>0$ the parallel set
\begin{\eq}\label{parallelset}
K(r):=\{x\in\rd: d(x,K)=\min_{y\in K}|x-y|\le r\}
\end{\eq}
possesses the property that the closure of its complement
\begin{\eq}\label{closcompl}
\widetilde{K(r)}:= \overline{K(r)^c}
\end{\eq}
is a set with positive reach with Lipschitz boundary. (Recall that $X\subset \rd$ is a set with positive reach, if for some $\delta>0$ every point $x\in X(\delta)$  has a unique point $\Pi_X(x)\in X$ nearest to $x$.)
A sufficient condition is that $r$ is a regular value of the Euclidean distance function to $K$ (see Fu \cite[Theorem 4.1]{Fu85} together with \cite[Proposition 3]{RZ03}). In this case
both sets $\widetilde{K(r)}$ and $K(r)$ are {\it Lipschitz $d$-manifolds of bounded curvature} in the sense of \cite{RZ05}, implying that their {\it k-th Lipschitz-Killing curvature measures}, $k=0,1,\ldots,d-1$, are determined in this general context and agree with the classical versions in the special cases. (See also Chapter~9 in \cite{RZ19} for some background and extensions.)
Moreover, they satisfy
\begin{\eq}  \label{reflect}
C_k(K(r),\cdot)=(-1)^{d-1-k}C_k\big(\tk,\cdot\big)\, ,
\end{\eq}
which implies that the $C_k(K(r),\cdot)$ are signed measures with finite {\it variation measures}
$C_k^{\var}(K(r),\cdot)$ and, moreover, that they have explicit integral representations which reduce to the ones in \cite{Za86b} (cf. \cite[Theorem 3]{RZ05} for the general case).
The corresponding normal cycle representation will briefly be mentioned below. We list some of the main properties of curvature measures of parallel sets which will be used repeatedly:
$2C_{d-1}(K(r),\cdot)$ agrees with $(d-1)$-{\it dimensional Hausdorff measure} $\H^{d-1}$ on the boundary $\partial K(r)$. The latter is a finite measure for all $r>0$ and all compact sets $K$. Therefore we use the notation
$$C_{d-1}(K(r),\cdot):=\frac{1}{2}\H^{d-1}(\partial K(r)\cap(\cdot))$$ in the general case. For completeness we also define
$C_d(K(r),\cdot)$ to be the {\it Lebesgue measure restricted to} $K(r)$. The {\it total curvatures} of $K(r)$ are denoted by
\begin{\eq}
C_k(K(r)):=C_k(K(r),\rd)\, ,~~ k=0,\ldots ,d\, .
\end{\eq}

By an associated Gauss-Bonnet theorem (see \cite[Theorems 2,3]{RZ03}) the {\it Gauss curvature}
$C_0(K(r))$ coincides with the {\it Euler-Poincar\'{e} characteristic} $\chi(K(r))$, whenever the curvature measure $C_0(K(r),\cdot)$ is defined as described above. For smooth boundaries $\partial K(r)$ in the differential geometric setting, $C_k(K(r))$ can be interpreted as the {\it total $k$-th order mean curvature} of $K(r)$, which is extrinsic if $d-1-k$ is odd. For $k=d-2$ it is also known as extrinsic {\it total mean curvature} of $K(r)$, and for $k=d-3$ it coincides with the intrinsic {\it total scalar curvature} of $\partial K(r)$, up to certain constants.

The curvature measures are {\it motion invariant}, i.e.,
\begin{\eq}
C_k(g(K(r)),g(\cdot))=C_k(K(r),\cdot)~~\mbox{for any Euclidean motion}~g\, ,
\end{\eq}
they are {\it homogeneous of degree} $k$, i.e.,
\begin{\eq}
C_k\big((\lambda K)(\lambda r),\lambda (\cdot)\big)=C_k\big(\lambda K(r),\lambda (\cdot)\big)=\lambda^k\, C_k(K(r),\cdot)\, ,~~\lambda>0\, ,
\end{\eq}
and \emph{locally determined}, i.e.,
\begin{\eq}
C_k(K(r),(\cdot)\cap G)=C_k(K'(r'),(\cdot)\cap G)
\end{\eq}
for any open set $G\subset\rd$ such that $K(r)\cap G = K'(r')\cap G$, where $K(r)$ and $K'(r')$ are
both parallel sets such that the closures of their complements have positive reach.

\subsection{Unit normal cycles, measurability and continuity}\label{sec: measurability and continuity}
We now summarize some facts about sets with positive reach needed in the sequel:\\
Recall that $\rea X$ of a set $X\subset \rd$ is defined as the supremum over all $r\geq 0$ such that for every point $x$ in the $r$-parallel set of $X$ there is a unique point $\Pi_Xx\in X$ nearest to $x$. The mapping $\Pi_X$ (on its domain) is called the \emph{metric projection} onto $X$. For a set $X$ of positive reach the \emph{unit normal bundle} is defined as
$$\nor X:=\{(x,n)\in \rd\times S^{d-1}: x\in X ,\, n\in \operatorname{Nor}(X,x)\}$$
where $\operatorname{Nor}(X,x)$ is the dual cone to the (convex) tangent cone of $X$ at $x$.

If additionally $\nor X\cap\r(\nor X)=\emptyset$ for the {\it normal reflection} $\r\: (x,n)\mapsto (x,-n)$, then $X$ is a $d$-dimensional Lipschitz manifold with boundary (see \cite[Proposition 3]{RZ03}).

For general $X$ with $\rea X>0$ there is an associated  rectifiable current called the {\it unit normal cycle} of $X$ which is given by
$$N_X(\vphi):=\int_{\nor X}\<a_X(x,n),\vphi(x,n)\>\H^{d-1}(d(x,n))$$
for an appropriate unit simple $(d-1)$-vector field $a_X=a_1\wedge\ldots\wedge a_{d-1}$ associated a.e. with the tangent spaces of $\nor X$ and for integrable differential $(d-1)$-forms $\vphi$. In these terms for $k\le d-1$ the curvature measure may be represented by
$$C_k(X,B)=N_X\llcorner \1_{B\times\rd}(\vphi_k)=\int_{\nor X\cap (B\times\rd)}\<a_X(x,n),\vphi_k(n)\>\H^{d-1}(d(x,n))$$
for any bounded Borel set $B\subset\rd$, where the $k$-th Lipschitz-Killing curvature form $\vphi_k$ does not depend on the points $x$ and is defined by its action on a simple $(d-1)$-vector $\eta=\eta_1\wedge\ldots\wedge\eta_{d-1}$ as follows: Let $\pi_0(y,z):=y$ and $\pi_1(y,z):=z$ be the coordinate projections in $\rd\times\rd$, $e'_1,\ldots,e'_d$ be the dual basis of the standard basis in $\rd$ and $\mathcal{O}_k$ the surface area of the $k$-dimensional unit sphere. Then we have
$$\<\eta,\vphi_k(n)\>:=\mathcal{O}_{d-k}^{-1}\sum_{\ep_i\in\{0,1\},\, \sum\ep_i=d-1-k}\<\pi_{\ep_1}\eta_1\wedge\ldots\wedge\pi_{\ep_{d-1}}\eta_{d-1}\wedge n,e'_1\wedge\ldots\wedge e'_d\>\, .
$$
Below we will use the following nice behavior of parallel sets with sufficiently large distances. The diameter of a compact set $K$ is denoted by $|K|$.
\begin{lems}\label{lems:large distances}\cite[Theorem 4.1]{Za11}
For any $R>\sqrt{2}$ and $k=0,1,\ldots,d$ there exists a constant $c_k(R)$ such that for any compact set $K\subset\mathbb{R}^d$ and any $r\ge R|K|$,
$$\rea(\widetilde{K(r)})\ge|K|\sqrt{R^2-1}\, ,$$
$\partial K(r)$ is a $(d-1)$-dimensional Lipschitz submanifold, and
$$\sup_{r\ge R|K|}\frac{C_k^{\var}(K(r),\rd)}{r^k}\le c_k(R)\, .$$
\end{lems}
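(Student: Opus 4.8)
The plan is to treat the three assertions in turn, exploiting two reductions. First, by motion invariance and the homogeneity $C_k(\lambda K(r),\lambda\,\cdot)=\lambda^kC_k(K(r),\cdot)$ together with $\rea(\lambda X)=\lambda\,\rea(X)$, it suffices to prove the reach bound and the manifold property in the normalised case $|K|=1$, $r\ge R$. For the curvature estimate I would instead scale by $\lambda=1/r$: since $\tfrac1r K(r)=(\tfrac1r K)(1)$ and $C_k^{\var}(K(r),\R^d)=r^k\,C_k^{\var}\big((\tfrac1r K)(1),\R^d\big)$, the claim $\sup_{r\ge R|K|}r^{-k}C_k^{\var}(K(r),\R^d)\le c_k(R)$ is \emph{equivalent} to the uniform bound $\sup\{C_k^{\var}(K'(1),\R^d):\ |K'|\le 1/R\}<\infty$ over all compact sets $K'$ of small diameter. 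Throughout I fix a point $o$ (a point of $K$, or its circumcentre) with $K\subseteq\overline B(o,|K|)$, which yields the sandwich $\overline B(o,r)\subseteq K(r)\subseteq\overline B(o,r+|K|)$ and hence confines $\partial K(r)$ to the spherical shell $\{r\le|x-o|\le r+|K|\}$.

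For the Lipschitz manifold assertion I would first show that every $r\ge R|K|$ is a regular value of $\dist(\cdot,K)$: a critical point $y$ satisfies $y\in\conv\Pi_K(y)\subseteq\conv K$ and $\dist(y,K)$ is at most the circumradius of $K$, which by Jung's inequality is $\le|K|\sqrt{d/(2(d+1))}<|K|/\sqrt2$; since $R>\sqrt2$, all critical values are strictly below $R|K|$. Consequently $r$ is regular, and by Fu \cite{Fu85} together with \cite[Proposition 3]{RZ03} the set $\tk$ has positive reach, satisfies $\nor\tk\cap\r(\nor\tk)=\emptyset$, and $\partial K(r)$ is a $(d-1)$-dimensional Lipschitz submanifold. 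This settles the middle assertion.

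For the quantitative reach bound I would use the exterior--ball characterisation of reach. Suppose some $x$ with $\dist(x,K)<r$ has two distinct nearest points $y_1\ne y_2$ in $\tk$, both at distance $t$; the goal is $t\ge\sqrt{r^2-|K|^2}$, which is $\ge|K|\sqrt{R^2-1}$ once $r\ge R|K|$. The open ball $B(x,t)$ lies in $\{\dist(\cdot,K)<r\}$, while $\dist(y_i,K)=r$ gives $|q-y_1|\ge r$ and $|q-y_2|\ge r$ for every $q\in K$; applying the parallelogram law to $m=\tfrac12(y_1+y_2)$ and $\ell=\tfrac12|y_1-y_2|$ yields $\dist(m,K)\ge\sqrt{r^2-\ell^2}$. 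Combining this with the description of the normal cone $\Nor(\tk,y_i)=\operatorname{cone}\{q-y_i:\ q\in\Pi_K(y_i)\}$ (so that $x-y_i$ is a nonnegative combination of the nearest--point directions) and the diameter bound $|q-q'|\le|K|$ forces the asserted inequality for $t$. A useful sanity check, which also identifies where the diameter must enter, is that if each $y_i$ had a \emph{unique} nearest point then $x$ would be forced onto the nearest--point rays and $t=r$; the binding configurations are therefore exactly those in which some $y_i$ has several nearest points.

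The main obstacle is the curvature total--variation bound, for which the naive estimate is off by the wrong power of $r$: bounding the generalised curvatures entering $\vphi_k$ by $1/\rea(\tk)$ and using $\H^{d-1}(\partial K(r))=O(r^{d-1})$ only gives $O(r^{d-1})$, whereas the reach \emph{floor} $|K|\sqrt{R^2-1}$ does not grow with $r$ and so cannot recover the sharp exponent $r^k$. This is precisely what the scaling reduction circumvents. Working with $K'(1)$, $|K'|\le 1/R<1/\sqrt2$, the circumradius $\rho_0$ of $K'$ is $<1/2$, so $\partial K'(1)$ is star--shaped about $o$ and is a radial Lipschitz graph $\theta\mapsto\rho(\theta)\theta$ over $S^{d-1}$ with $\rho\in[1-\rho_0,1+\rho_0]$; moreover at $y\in\partial K'(1)$ the outer normal $y-q$ ($q\in\Pi_{K'}(y)$, $|q-o|\le\rho_0$) makes angle at most $\arccos(1-2\rho_0)<\pi/2$ with the radial direction, so the tilt of $\nor\widetilde{K'(1)}$ relative to the sphere is uniformly controlled. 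Representing $C_k$ by the normal cycle and estimating $C_k^{\var}(K'(1),\R^d)\le\int_{\nor\widetilde{K'(1)}}|\vphi_k|$, the uniform tilt bound and the shell bounds on $\H^{d-1}$ yield a constant depending only on $d$, $k$ and $R$; undoing the scaling turns this into $\sup_{r\ge R|K|}r^{-k}C_k^{\var}(K(r),\R^d)\le c_k(R)$. I expect the care needed to make the normal--cycle estimate uniform as $|K'|\to 0$ (equivalently $r\to\infty$), using genuine near--sphericity rather than the reach bound, to be the crux of the whole argument.
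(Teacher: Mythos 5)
The paper does not actually prove this lemma --- it is imported verbatim from \cite[Theorem 4.1]{Za11} --- so your proposal can only be compared with the standard argument, whose essential ingredients the paper reproduces in its Appendix. Your scaling reductions are fine, and your treatment of the middle assertion is correct: critical values of $d_K$ are bounded by $\sqrt{d/(2d+2)}\,|K|<|K|/\sqrt2<R|K|$, so every $r\ge R|K|$ is a regular value and \cite{Fu85} together with \cite[Proposition~3]{RZ03} gives positive reach and the Lipschitz manifold property. For the reach bound, however, your argument stops exactly at the decisive step: the parallelogram identity does yield $d_K(m)\ge\sqrt{r^2-\ell^2}$ for the midpoint of two feet, but you never convert this into the required inequality $t\ge\sqrt{r^2-|K|^2}$; the sentence ``combining this with the description of the normal cone \dots forces the asserted inequality'' is where the entire proof lives, and it is not written. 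The clean route is the one the Appendix sets up: for $x\in\partial K(r)$ one has $\Sigma_K(x)\subset\partial B(x,r)$ with $\diam\Sigma_K(x)\le|K|$, and the barycentric identity $|x-p|^2=r^2-\sum_i\lambda_i|p-s_i|^2$ for $p=\sum_i\lambda_i s_i\in\conv\Sigma_K(x)$ gives $J_K(x)\ge\sqrt{r^2-|K|^2}\ge|K|\sqrt{R^2-1}$; then Lemma~\ref{L1} (Federer's quadratic criterion), or equivalently the exterior-ball inclusion $\INt B(s,|s-x|)\subset K_{<r}$ for $s\in\conv\Sigma_K(x)$ from the first Appendix lemma, delivers the reach bound directly. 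You should replace the two-feet formulation by this.

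The curvature bound contains a genuine gap. You propose to bound $C_k^{\var}(K'(1),\rd)$ by the integral of $|\vphi_k|$ over $\nor\widetilde{K'(1)}$ and to control that integral by the surface area of $\partial K'(1)$ together with a uniform tilt bound, using star-shapedness. Star-shapedness of $K'(1)$ about $o$ is correct (it is a union of unit balls all containing $o$) and it does bound $\H^{d-1}(\partial K'(1))$; but $\H^{d-1}(\nor\widetilde{K'(1)})$ is \emph{not} controlled by the area and normal tilt of the boundary. The boundary is only Lipschitz and has concave edges where spheres centred at distinct points of $K'$ meet; over such a $(d-2)$-dimensional edge the normal bundle carries a full arc of normals, and in general the Gauss-map contribution to $\H^{d-1}(\nor X)$ involves curvature-type Jacobians that a radial graph representation of the base simply does not see. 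The correct mechanism is precisely the reach bound you announce you will avoid (``using genuine near-sphericity rather than the reach bound''): with $\reach\widetilde{K'(1)}\ge\rho_R:=\sqrt{1-R^{-2}}$, the map $(x,n)\mapsto x+sn$ for fixed $0<s<\rho_R$ is a bi-Lipschitz bijection from $\nor\widetilde{K'(1)}$ onto the level set $\{z:\dist(z,\widetilde{K'(1)})=s\}$, with constants depending only on $s$ and $\rho_R$ (Federer's Lipschitz estimate for the metric projection); that level set lies in a ball of radius $2$ and its $\H^{d-1}$-measure is bounded by a constant depending only on $d$ and $R$, e.g.\ via the Kneser property of the volume function used elsewhere in the paper for $k=d-1$. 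This gives $\H^{d-1}(\nor\widetilde{K'(1)})\le c(d,R)$, hence the uniform bound on $C_k^{\var}(K'(1),\rd)$, and undoing the scaling produces $c_k(R)$.
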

(It is well-known that for compact convex sets $K$ the last two properties hold for all $r>0$, the last one is the sharper version of an isodiametric inequality with an optimal constant.)

In the general case we define the set of {\it regular pairs} of compact sets and distances by
\begin{equation}\label{regular pairs}
\Reg:=\bigg\{(r,K)\in [0,\infty)\times\mathcal{K}: \widetilde{K(r)}\in\PR ,\, \nor \widetilde{K(r)}\cap\rho(\nor\widetilde{ K(r)}) =\emptyset \bigg\}\, ,
\end{equation}
where $\K$ denotes the family of all nonempty compact subsets of $\R^d$, $\PR$ stands for the {\it space of subsets of} $\rd$ {\it with positive reach} and $\rho$ is the {\it normal reflection} given by $(x,n)\mapsto (x,-n)$.
\begin{rems}
An equivalent representation of regular pairs at positive distances is
\begin{equation} \label{rp2}
\Reg\cap ((0,\infty)\times\mathcal{K}) = \left\{ (r,K)\in (0,\infty)\times\mathcal{K}:\, r\text{ is a regular value of }d_K\right\},
\end{equation}
where $d_K:x\mapsto d(x,K)$ is the distance function of $K$, see Appendix.
\end{rems}
In Fu \cite[Theorem 4.1]{Fu85} it is shown that in space dimensions $d\le 3$ for any compact set $K$ there exists a bounded exceptional set $E$ of Lebesgue measure $0$ such that for any $r\notin E$ the set $\widetilde{K(r)}$ has positive reach and $\nor \widetilde{K(r)}\cap\rho(\nor\widetilde{ K(r)})=\emptyset$. Moreover, if $r>\sqrt{d/(2d+2)}|K|$, these two properties hold for {\it any} space dimension $d$.
It follows that in space dimensions $d\le 3$ for any compact $K$ the pair $(r,K)$ is regular for Lebesgue-a.a.\ $r$. In higher dimensions we will formulate this as a regularity condition on the random fractal sets.

For probabilistic purposes we need the following measurability properties.
Let $\F$ denote the space of all closed subsets of $\rd$ provided with the Vietoris topology (generated by the sets $\{A\in\mathfrak{F}:A\cap O\ne\emptyset\}$ and $\{A\in\mathfrak{F}:A\cap C=\emptyset\}$ for open $O$ and closed $C$) and the associated Borel $\s$-algebra $\mathfrak{B}(\F)$.
On its subspace $\K$ (of \emph{nonempty compact} sets), this topology can be metrized by the \emph{Hausdorff distance}
$$d_H(K,L):=\max\left\{\max_{x\in K}d(x,L),\max_{y\in L}d(y,K)\right\},\quad K,L\in\K,$$
and $\mathfrak{B}(\K)$ denotes the Borel $\s$-algebra on $\K$.
\begin{lems} The following assertions hold. \label{lem:measurability}
\begin{enumerate}[{\rm(i)}]
\item
$\PR\in\mathfrak{B}(\mathfrak{F})$.
\item The mapping
$(r,K)\mapsto \widetilde{K(r)}$ from $[0,\infty)\times \mathcal{K}$ to $\mathfrak{F}$
is $[\mathfrak{B}([0,\infty))\otimes \mathfrak{B}(\mathcal{K})\, ,\, \mathfrak{B}(\mathfrak{F})]$-measurable.
\item
For any bounded Borel set $B\subset\R^d$ and $k=0,\dots,d$, the mappings
$X\mapsto C_k(X,B)$ and $X\mapsto C_k^{\var}(X,B)$ from $\PR$ to $\R$ are $[\mathfrak{B}(\mathfrak{F})\cap\PR,\mathfrak{B}(\R)]$-mea- surable.
\item
For $k=0,\dots,d$, the mappings $(X,F)\mapsto C_k(X,F)$ and $(X,F)\mapsto C^{\var}_k(X,F)$ from $\PR\times\K$ to $\R$ are
$[(\mathfrak{B}(\mathfrak{F})\cap\PR)\otimes\mathfrak{B}(\K),\mathfrak{B}(\R)]$-measurable.
 \item
$\Reg\in \mathfrak{B}([0,\infty))\otimes \mathfrak{B}(\K)$.
\end{enumerate}
\end{lems}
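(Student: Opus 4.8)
The goal is to establish the five measurability claims of Lemma~\ref{lem:measurability}. The plan is to treat them in the listed order, since each relies on the previous ones, and to reduce everything to approximation by finite configurations where curvature measures can be computed via the normal-cycle integral.

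\textbf{Parts (i) and (ii).} For (i), the plan is to characterize positive reach by a countable family of conditions. One can write
$$\PR=\bigcup_{m=1}^\infty\bigcap_{x,y}\{X\in\F: \text{uniqueness of nearest point holds at scale }1/m\},$$
and the key point is that for a fixed rational radius and fixed rational centers, the relevant nearest-point conditions are closed or open in the Vietoris topology, so the whole set lies in $\mathfrak{B}(\F)$ after taking countable unions and intersections. For (ii), the map $(r,K)\mapsto K(r)$ is continuous (indeed $1$-Lipschitz in $K$ w.r.t.\ $d_H$ and monotone in $r$), so the map into $\F$ is Borel; taking complements and closures is a Borel operation on $\F$, giving measurability of $(r,K)\mapsto\widetilde{K(r)}$.

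\textbf{Part (iii): the crux.} This is the main obstacle and deserves the bulk of the effort. The difficulty is that $C_k(X,B)$ is defined through the normal-cycle integral over $\nor X$, a set that varies in a delicate way with $X$, so pointwise measurability is not automatic. The plan is to use the approximation of a general $X\in\PR$ by polyhedral or finite-union configurations: for polyconvex or finite-reach sets the curvature measures are explicit, and $X\mapsto C_k(X,B)$ is continuous (or at least Borel) there. I would then invoke a weak-continuity / approximation result for curvature measures under Hausdorff convergence of sets with uniformly positive reach (cf.\ the continuity results underlying the Renewal-theorem application, and Theorem~\ref{P_cont} in the Appendix). Concretely, one approximates $X$ by sets $X_j$ with $X_j\to X$ in $d_H$ and $\reach X_j$ bounded below, so that $C_k(X_j,B)\to C_k(X,B)$ for suitable $B$; measurability of each approximant together with the limit being a pointwise limit of measurable functions yields measurability of the limit. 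The variation-measure statement for $C_k^{\var}$ follows from the same scheme, using the Hahn--Jordan decomposition and that the total variation is again obtained as an integral against the absolute value of the integrand in the normal-cycle representation.

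\textbf{Parts (iv) and (v).} Part (iv) upgrades (iii) to joint measurability in the pair $(X,F)$. The plan is to fix a countable dense family and use that $F\mapsto C_k(X,F)$ (for fixed $X$) is a signed measure, while $X\mapsto C_k(X,B)$ is measurable by (iii) for each fixed Borel $B$; joint measurability then follows by a monotone-class / Dynkin argument, approximating a general Borel $F$ by sets from a generating algebra (e.g.\ finite unions of rational boxes) on which the map separates into the two measurable coordinates. Finally, (v) is a consequence of (i) and (ii): by (ii) the map $(r,K)\mapsto\widetilde{K(r)}$ is measurable, and $\Reg$ is the preimage of the Borel set
$$\{X\in\PR:\nor X\cap\rho(\nor X)=\emptyset\}$$
under this map, intersected with the preimage of $\PR$. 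So it remains to check that the disjointness-of-normal-bundle condition carves out a Borel subset of $\PR$; this I would again verify by expressing $\nor X\cap\rho(\nor X)=\emptyset$ through countably many closed conditions on $X$, exactly as in the proof of (i). Throughout, the recurring technical tool is that any geometric condition stated via nearest points, normals, or reach can be rephrased as a countable Boolean combination of Vietoris-open/closed conditions, so the real work is concentrated in the continuity-under-approximation argument of part~(iii).
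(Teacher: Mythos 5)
The main gap is in your part (iv). The assertion there is joint measurability of $(X,F)\mapsto C_k(X,F)$ with respect to the product of $\mathfrak{B}(\mathfrak{F})\cap\PR$ with the Borel $\sigma$-algebra of the \emph{metric space} $(\K,d_H)$ — the second variable is a point of the hyperspace $\K$, not a Borel subset of $\R^d$. Your monotone-class/Dynkin argument runs over a generating algebra of subsets of $\R^d$ (rational boxes), so at best it upgrades (iii) to "for every Borel $B\subset\R^d$ the map $X\mapsto C_k(X,B)$ is measurable"; it produces no information about measurability in $F$ as an element of $\K$, and in particular no sets of $\mathfrak{B}(\K)$. Note also that for fixed $X$ the map $F\mapsto C_k^{\var}(X,F)$ is only upper semicontinuous on $\K$, not continuous, so the standard Carath\'eodory-function theorem does not apply directly either. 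The paper closes exactly this gap with Lemma~\ref{L_prod_meas}: approximate $\1_F$ by $f_{F,n}=(1-n\,d_F)^+$, observe that $F\mapsto f_{F,n}$ is continuous from $\K$ into $\C_c$ while $(X,f)\mapsto\int f\,dC_k(X,\cdot)$ is a Carath\'eodory function (measurable in $X$, continuous in $f$), hence jointly measurable, and then let $n\to\infty$. Without this device, or an equivalent one, your (iv) does not go through.

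There are secondary issues elsewhere. In (ii) you assert that "taking complements and closures is a Borel operation on $\F$"; the map $C\mapsto\overline{C^c}$ is not continuous and its Borel measurability is not obvious, which is why the paper instead argues directly that $(r,K)\mapsto\widetilde{K(r)}\cap B(R)$ is continuous for each ball $B(R)$ and then exhausts $\widetilde{K(r)}$ by these intersections. In (iii) the paper simply cites Z\"ahle's 1986 results, whereas you propose an approximation scheme; its outline (work on $\{X:\reach X\geq 1/m\}$, where curvature measures depend weakly continuously on $X$) is reasonable, but as written it is missing (a) a measurable choice of approximants $X_j$ of $X$, without which "pointwise limit of measurable functions" is circular, (b) the passage from weak convergence, which controls only continuity sets, to arbitrary bounded Borel $B$, and (c) a separate argument for $C_k^{\var}$, since weak convergence of signed measures does not imply convergence of their variation measures. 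Your reductions in (i) and (v) to countable Boolean combinations of Vietoris-open/closed conditions are in the right spirit and consistent with what the paper cites, but they too are left entirely at the level of a plan.
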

\begin{proof}
(i) See Proposition 1.1.1 in \cite{Za86}.

(ii) It is easy to see that for the centered balls $B(R)$ of radius $R$ the mappings $(r,K)\mapsto \widetilde{K(r)}\cap B(R)$
are continuous (with respect to the Hausdorff metric). Using that $\widetilde{K(r)}=\bigcup_{R=1}^\infty
(\widetilde{K(r)}\cap B(R))$ we infer the assertion.

(iii) See Theorem 2.1.2 (i) and Theorem~6.2.2 in \cite{Za86}.

(iv) Due to (iii), the mappings $X\mapsto C_k(X,\cdot)$ and $X\mapsto C_k^{\var}(X,\cdot)$ are random signed measures on the probability space $(\PR,\mathfrak{B}(\mathfrak{F})\cap\PR,\Pr)$ for any probability measure $\Pr$ on the given space. The result then follows from Lemma~\ref{L_prod_meas}.

(v) This follows from (i), (ii), and the fact that the mapping $X\mapsto \nor X$ from $\PR$ into the space of closed subsets of $\rd\times\rd$ is measurable.
\end{proof}

In order to apply the classical Renewal theorem to the curvatures of random fractals we additionally need the following continuity property.
Recall that the set of regular values of the distance function $d_K$ is open. (This can e.g.\ be seen from \eqref{reg_point} in the Appendix.) Therefore, there exists for each regular value $r_0$ an $\ep>0$ such that for any $r\in(r_0-\ep,r_0+\ep)$, $r$ is a regular value of $d_K$, too.
\begin{lems}\label{lem: contcurvmeas}
For any $(r_0,K)\in \Reg$ with $r_0>0$ and
any $k\in\{0,\ldots,d\}$, the measures $C_k(K(r),\cdot)$ converge weakly to $C_k(K(r_0),\cdot)$ as $r\to r_0$.
\end{lems}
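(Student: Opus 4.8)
The plan is to pass through the unit normal cycle representation and to deduce the asserted weak convergence from a flat-norm convergence of the normal cycles $N_{\tk}$. Since the set of regular values of $d_K$ is open, the hypothesis $(r_0,K)\in\Reg$ provides an $\ep>0$ with $(r,K)\in\Reg$ for all $r\in(r_0-\ep,r_0+\ep)$; for each such $r$ the set $\tk$ has positive reach, its normal cycle $N_{\tk}$ is defined, and by \eqref{reflect} it suffices to prove $C_k(\tk,\cdot)\to C_k(\widetilde{K(r_0)},\cdot)$, the case of $C_k(K(r),\cdot)$ following with the sign $(-1)^{d-1-k}$. For $k=d$ the measure is $\L$ restricted to $K(r)$ and the claim is immediate from $\L(K(r)\,\triangle\,K(r_0))\to0$, so we assume $k\le d-1$. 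All sets $\tk$ with $|r-r_0|<\ep$ lie in a common bounded set and converge to $\widetilde{K(r_0)}$ in the Hausdorff metric as $r\to r_0$, so their curvature measures are supported in a fixed compact set and have finite total variation. Hence weak convergence will follow once we show $N_{\tk}\big((g\circ\pi_0)\,\vphi_k\big)\to N_{\widetilde{K(r_0)}}\big((g\circ\pi_0)\,\vphi_k\big)$ for every bounded continuous $g$, where $\pi_0$ is the base-point projection; and it is enough to verify this for smooth $g$ provided a uniform bound on the total variations is available.

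The crucial geometric input is a uniform lower bound on the reach near the regular level: after possibly shrinking $\ep$ there is a $\delta>0$ with $\rea(\tk)\ge\delta$ for all $r$ with $|r-r_0|<\ep$. This is where regularity of $r_0$ is used decisively. At a \emph{regular} value the generalized gradient of $d_K$ stays bounded away from $0$ on the compact level set $\{d_K=r_0\}$, so the nearest-point structure governing $\tk$ cannot degenerate as $r$ varies, and the reach stays bounded below; at a critical value the reach may collapse, which is precisely why the statement is restricted to regular pairs. The same non-degeneracy yields a bi-Lipschitz correspondence of the level sets $\partial K(r)\to\partial K(r_0)$ converging to the identity, realized through the gradient flow of $d_K$ near the regular level.

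Given this uniform reach bound together with the Hausdorff convergence $\tk\to\widetilde{K(r_0)}$, the normal bundles $\nor\tk$ converge locally in the Hausdorff sense and the unit normal cycles $N_{\tk}$ converge to $N_{\widetilde{K(r_0)}}$ in the local flat norm, with locally uniformly bounded masses (the latter following from the uniform reach bound and the common boundedness of the sets, in the spirit of Lemma~\ref{lems:large distances}). Pairing these convergent currents with the fixed smooth form $(g\circ\pi_0)\,\vphi_k$ then gives the desired convergence for smooth $g$; approximating an arbitrary bounded continuous $g$ by smooth functions, controlled by the uniform mass bound, extends it to all continuous $g$ and, via \eqref{reflect}, back to $C_k(K(r),\cdot)$.

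The main obstacle is the interplay of the last two steps: establishing the uniform lower bound on $\rea(\tk)$ near a regular value and, with it, the flat-norm convergence of the normal cycles. The delicate point is that each $\nor\tk$ is only a Lipschitz submanifold and its orienting $(d-1)$-vector field $a_{\tk}$ is defined merely $\H^{d-1}$-almost everywhere, so one must show that these tangential data vary continuously with $r$. The regular-value hypothesis is exactly what keeps the normal directions from becoming tangential in the limit and thereby makes the dominated-convergence argument go through. This analysis, carried out via the distance function and its gradient flow near the regular level, is the content of Theorem~\ref{P_cont} in the Appendix, on which the present lemma rests.
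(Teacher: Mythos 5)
Your overall architecture coincides with the paper's: handle $k=d$ directly, use \eqref{reflect} to pass to $\widetilde{K(r)}$, establish a uniform lower bound on $\reach\widetilde{K(r)}$ for $r$ near the regular value $r_0$, and then invoke a Federer-type continuity theorem for curvature measures of sets with reach bounded below that converge in the (local) Hausdorff sense. The paper takes the last step by citing \cite[\S 5.9]{Fe59} directly rather than arguing through flat-norm convergence of normal cycles and the a.e.-defined orienting vector fields $a_{\widetilde{K(r)}}$; your route would also work, but the gradient-flow bi-Lipschitz correspondence of level sets that you invoke is not needed for either version, and note that $\widetilde{K(r)}$ is unbounded, so ``all sets lie in a common bounded set'' should be read as a statement about the supports $\partial K(r)$ of the curvature measures.

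The genuine gap is the step you yourself single out as ``the crucial geometric input'': the uniform lower bound $\rea(\widetilde{K(r)})\ge\delta>0$ for $|r-r_0|<\ep$. You assert that since the Clarke gradient of $d_K$ is bounded away from $0$ on the level set, ``the nearest-point structure cannot degenerate'' and ``the reach stays bounded below,'' and then defer the analysis to Theorem~\ref{P_cont} --- but Theorem~\ref{P_cont} \emph{is} the proof of this lemma, so this is circular; the implication from non-vanishing gradient to a quantitative reach bound is exactly the nontrivial content. The paper fills it with two concrete lemmas: first, the estimate $\rea(\widetilde{K(r)})\ge\inf\{J_K(x):x\in\partial K(r)\}$ with $J_K(x)=\dist(x,\conv\Sigma_K(x))$, proved via the second-order inequality $d_{\Tan(\widetilde{K(r)},x)}(y-x)\le |y-x|^2/(2\eta)$ using the cone identities \eqref{Tan} and \eqref{Nor} together with a ball-inclusion lemma for $\conv\Sigma_K(x)$; second, the lower semicontinuity of $J_K$, which follows from the upper semicontinuity of the multifunction $x\mapsto\Sigma_K(x)$ and the continuity of the convex hull. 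Only the combination of these with the compactness of $\partial K(r_0)$ and the inclusion $\partial K(r)\subset U$ for $r$ near $r_0$ yields the uniform $\eta>0$. Without some substitute for these two facts your argument does not close: positivity of the reach at each individual $r$ does not by itself give a uniform lower bound, and the link between the subgradient $\partial d_K(x)=\conv(x-\Sigma_K(x))$ being bounded away from $0$ and the reach of $\widetilde{K(r)}$ is precisely what must be proved.
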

A detailed proof will be given in the Appendix, see Theorem~\ref{P_cont}.
\begin{rems}\label{rems: large k}
If $k=d$ or $k=d-1$, then the weak convergence in Lemma~\ref{lem: contcurvmeas} remains valid for general compact sets $K$ and almost all $r_0>0$, i.e., we need not restrict $K$ to the class $\PR$ or $r_0$ to regular values of $K$. To see this, note that $r\mapsto V_d(K(r))=\L(K(r))$ is a \emph{Kneser function} (see \cite[Lemma~5]{St76}) and that $V_{d-1}(K(r))=\H^{d-1}(\partial K(r))=\frac{d}{dr}\L(K(r))$ for all $r>0$ up to a countable set (see \cite[Corollary~2.5]{RW10}).
The continuity of $V_d(r)$ at all $r>0$ and of $V_{d-1}(K(r))$ at all $r>0$ up to a countable set follows then from the properties of Kneser functions (see \cite[Lemma 2]{St76}).

Furthermore, since both mappings $K\mapsto \L(K)$ and $K\mapsto\H^{d-1}(\partial K)$ are measurable on $\K$ (cf. \cite{Za82}), Lemma \ref{lem:measurability} (iii) and (iv) are valid for $k=d$ and $k=d-1$ with $\PR$ replaced by the larger space $\K$.
\end{rems}
\section{Construction of homogeneous random fractals and statement of the results}
\label{sec:def_main_res}
For fixed $0<r_{\min}<r_{\max}<1$ let $\Sim$ be the set of all contractive similarities $g:\rd\rightarrow\rd$ with contraction ratios $r$ such that  $r_{\min}\le r\le r_{\max}$. We equip $\Sim$ with the topology given by uniform convergence on compact sets. $\CB$ denotes the associated Borel $\s$-algebra. The space $\O_0:=\bigcup_{k=2}^\infty\Sim^k$ together with the $\s$-algebra $\F_0:=\big\{A\subset\O_0: A\cap \Sim^k\in\otimes_{i=1}^k\CB~ \mbox{for all}~k\ge 2\big\}$, and with a distribution $\P_0$ on it provide the primary probability space $[\O_0,\F_0,\P_0]$. This space is used to generate a random \emph{iterated function system} (IFS) $(f_1,\ldots, f_N)$, with a random number $N$ of mappings $f_1,\ldots, f_N$ chosen randomly from $\Sim$. Note that, by construction, $N\geq 2$.

For the definition of the homogeneous model, we need a sequence of independent and equally distributed  random IFS. Therefore, the basic probability space for the model is the product space
\begin{equation}\label{basicprobspace}
[\Omega,\F,\P]:=\bigotimes_{n=1}^\infty~[\O_0,\F_0,\P_0]
\end{equation}
and the expectation symbol $\E$ will be used for integration with respect to $\P$.

The elements of $\Omega$ are denoted by
$$\o=(\o_1,\o_2,\ldots):=((f_1^1,\ldots,f_{N^1}^1),(f_1^2,\ldots,f_{N^2}^2),\ldots)\, ,$$
and $r_i^n$ are the contraction ratios of the similarities $f_i^n$. For $f_i^1$, $r_i^1$ and $N^1$ we will often write $f_i$, $r_i$ and $N$, resp. Below we will use the measurable mapping
$$\theta: \O\rightarrow \O,\quad \theta(\o_1,\o_2,\o_3,\ldots):=(\o_2,\o_3,\ldots)\,,$$
and for a random element $X$ and $n\in\N$ we define the shifted random element $X^{(n)}$ by
\begin{align}
  \label{eq:shifted-random-elements} X^{(n)}(\o):=X(\theta^n\o), \quad \o\in\O\, ,
\end{align}
where $\theta^n$ is the $n$-fold application of the shift $\theta$.
(If it is clear from the context, the argument $\o$ will be omitted.)

Note that, for each $n\in\N$, $(f_1^n,\ldots,f_{N^n}^n)$ is a random IFS  of random length $N^{n}$ with distribution $\P_0$ representing the $n$-th construction step. For different $n$ they are independent of each other.

In the sequel we assume that the {\it Uniform Open Set Condition} (UOSC) is satisfied: there exists a nonempty bounded open set $O\subset\rd$ such that $\P_0$-a.s.\
\begin{\eq}\label{UOSC}
\bigcup_{i=1}^N f_i(O)\subset O\quad\mbox{and}\quad
f_i(O)\cap f_j(O)=\emptyset~, i\ne j\,.
\end{\eq}
Then with $\P$-probability 1 all IFS in the product space fulfill this UOSC.

The corresponding random fractal set is introduced by means of a random \tit{coding tree}:
$\Sn=\Sn(\o):=\{\s_1\ldots\s_n:1\le\s_i\le N^{i},\,i=1,\ldots,n\}$ is the set of all \tit{nodes} at level $n$ and $\Si_*:=\bigcup_{n=0}^\infty\Sn$ is the set of all nodes of the tree, where $\Si_0$ denotes the empty code at level $0$.

Recall from \eqref{eq:shifted-random-elements} that $\Si_l^{(k)}$ is defined by $\Si_l^{(k)}(\o)=\Si_l(\theta^k\o)$, $\o\in\O$. For $\s=\s_1\ldots\s_k\in\Si_k$ and $\tau=\tau_1\ldots\tau_l\in\Si_l^{(k)}$ we write $\s\tau:=
\s_1\ldots\s_k\tau_1\ldots\tau_l\in\Si_{k+l}$ for the \tit{concatenation} of these codes.
If  $\s=\s_1\ldots\s_n\in\Sn$ and $0\le k\le n$, then $\s|k:=\s_1\ldots\s_k$ denotes the \tit{restriction} to the first $k$ entries of $\s$, and $|\s|:=n$ is the \tit{length} of $\s$. For any fixed $n=1,2,\ldots$, we associate to each $\s\in\Si_n$ the same random IFS $\left(f_1^{n+1},\ldots,f_{N^{n+1}}^{n+1}\right)$.  This leads to the homogeneous structure. (In the $V$-variable case these random IFS are chosen by means of $V$ different types. Here we have $V=1$, and in the case of random recursive constructions, where $V=\infty$, for different $\s\in\Si_n$ the IFS are i.i.d.) Furthermore, we define the random mappings
$$
f_\s:=f_{\s_1}^1\circ f_{\s_2}^2\circ\dots\circ f_{\s_n}^n
$$
with contraction ratios $r_\s:=r_{\s_1}^1r_{\s_2}^2\ldots r_{\s_n}^n$.
Then the random compact set
\begin{\eq}
F=F(\o)
:=\bigcap_{n=1}^\infty\bigcup_{\s\in\Sn} f_\s(\overline{O})
\end{\eq}
is $\P$-a.s.\ determined and measurable with respect to $\CB(\K)$, the Borel $\s$-algebra  determined by the Hausdorff distance $d_H$ on the space $\K$ of nonempty compact subsets of $\rd$.
It is called the associated \tit{homogeneous random fractal}. $F$ is \tit{stochastically self-similar} in the following sense (recall from \eqref{eq:shifted-random-elements} that $F^{(n)}(\o)=F(\theta^n\o)$):
$$F=\bigcup_{i=1}^N f_i(F^{(1)})~~,~~ \P\, \text{- a.s.}~.$$
More generally, for all $n\in\N$,
\begin{\eq}\label{sss}
F=\bigcup_{\s\in\Sn}f_\s(F^{(n)})~~,~~ \P\, \text{- a.s.}~,
\end{\eq}
where the random compact set $F^{(n)}$ is independent of the random mappings\\
$\{f_\s,\,\s\in\Sn$\} and has the same distribution as $F$. We will also use the abbreviation
$$F_\s:=f_\s(F^{(|\s|)})~~ {\rm for}~~~~ \s\in\Si_*\, .$$
For a boundedness condition in the application of the Renewal theorem we will further use a formula similar to \eqref{sss} with respect to some Markov stop: Fix an arbitrary constant $R>\sqrt{2}|O|$ and define for all $0<r<R$ a random subset of codes by
\begin{\eq}\label{subtree}
\Si(r):=\{\s\in\St: R \,r_\s\le r<R\,r_{\s||\s|-1}\}\, ,
\end{\eq}
where, by convention, $r_{\s||\s|-1}=1$, if $|\s|=1$. It is convenient to set $\Si(r):=\Si_0$ for $r\geq R$. Then we have
\begin{\eq}\label{ssstop}
F=\bigcup_{\s\in\Si(r)}F_\s~~,~~ \P\, \text{- a.s.}~.
\end{\eq}
In order to formulate the main results we also need the following random sets of \emph{boundary codes}, i.e., codes $\s\in\Si(r)$ for which the parallel set 
$F_\s(r)$ has distance less than $r$ to the {\it boundary} of the first iterate $f(O):=\bigcup_{i=1}^Nf_i(O)$ of the basic open set $O$ under the random similarities:
\begin{\eq}\label{btree}
\Sb(r):= \big\{\s\in\Si(r): F_\s(r)\cap(f(O)^c)(r)\ne\emptyset\big\}\, ,
\end{\eq}
Our considerations below do not depend on the choice of the constant $R$ which is related to Lemma~\ref{lems:large distances}, (where the above $R$ corresponds to $R|O|$).

\tit{In the sequel many relationships between random elements are fulfilled only with probability $1$. We will not mention this, if it can be seen from the context.} Furthermore, the different meanings of $F=F(\o)$ as random set and $F(r)=F(\o,r)$ as parallel set of the random set $F$ will also be clear from the context.

The measurability properties of the random elements used in the sequel follow from their definitions together with Lemma \ref{lem:measurability} and Remark \ref{rems: large k}.

Recall now that $2\le N$ and assume that $\E N<\infty$. Let $D$ be the number determined by
\begin{\eq}\label{minkdim}
\E\sum_{i=1}^N r_i^D = 1\, .
\end{\eq}
Note that UOSC implies $D\le d$.
\begin{\eq}\label{eq:mu-def}
\mu(\cdot):= \E \sum_{i=1}^N \1_{(\cdot)}(|\ln r_i|)\,r_i^D
\end{\eq}
is an associated probability distribution for the logarithmic contraction ratios $r_i$ of the primary random IFS. The corresponding mean value is denoted by
\begin{\eq}
\eta:=\E\sum_{i=1}^N |\ln r_i|\,r_i^D.
\end{\eq}
For our main result we need a slightly stronger condition than UOSC \eqref{UOSC}, namely the
{\it Uniform Strong Open Set Condition} (USOSC). It is satisfied, if
\begin{\eq}\label{USOSC}
\mbox{UOSC holds for some $O$ such that}~~ \P(F\cap O\ne\emptyset)>0\, .
\end{\eq}
\begin{rems}
In the literature instead of $\P(F\cap O\ne\emptyset)>0$ the condition
$$\P(F\cap O\ne\emptyset)=1$$ has been used. By the following arguments one can see that these conditions are equivalent:
If $\P(F\cap O\ne\emptyset)>0$, there must be some $n_0\in \mathbb{N}$ such that the set $$S_0:=\{\omega: \exists~ \s\in\Si_{n_0}~~\mbox{with}~~f_\s(\overline{O})\subset O\}$$ has positive probability. (Otherwise $F$ would concentrate on the boundary of $O$.) Then the sets $S_k:=\theta^{kn_0}(S_0)$, $k=0,1,2,\ldots$, are independent and have all the same probability. Hence the Borel-Cantelli lemma implies that $\P\big(\bigcap_{n=0}^\infty\bigcup_{k\ge n}S_k\big)=1$, in particular, $\P(\bigcup_{k=0}^\infty S_k)=1$. This and UOSC lead to $\P(F\cap O\ne\emptyset)=1$.
\end{rems}
By Lemma~\ref{lems:large distances} the boundary $\partial F(r)$ of the parallel set $F(r)=F(\o,r)$ is a $(d-1)$-Lipschitz manifold and $\widetilde{F(r)}$ has positive reach for all $r\geq R$, where $R$ is some constant such that $R>\sqrt{2}|O|\geq\sqrt{2}|F|$. For $r<R$, we will use the following regularity condition. (Recall from \eqref{regular pairs} the definition of the set $\Reg$ of regular pairs.)
The random set $F$ is called {\it regular} if the measure of irregular pairs vanishes, i.e., if
\begin{equation}\label{regular SSRS}
\P\times\mathcal{L}\big(\{(\o,r)\in\Omega\times(0,\infty): (r,F)\notin \Reg\}\big)=0\, .
\end{equation}
In this case we also consider the set
$$\Reg_{*}:=\{(\o,r): (r,F_{\sigma})\in \Reg~~\mbox{for all}~ \sigma\in\Sigma_{*}\}\, $$
(Recall that $\St=\bigcup_{n=1}^\infty\Si_n$.)
and note that
\begin{equation}\label{Reg*}
\int\int\1_{(\Reg_{*})^c}(\o,r)\mathcal{L}(dr)\P(d\o)=0\, .
\end{equation}
\big(To see this, let $\mathcal{N}(\o):=\{r>0: (r,F)\notin\Reg\}$
and observe that
\begin{eqnarray*}
&&\int\int\1_{(\Reg_{*})^c}(\o,r)\mathcal{L}(dr)\P(d\o)
=\int\mathcal{L}\big(\bigcup_{\s\in\St}r_\s(\o)\mathcal{N}(\theta^{|\s|}(\o))\big)\P(d\o)\\
&\le& \sum_{n=1}^\infty\int \sum_{\s\in\Si_n}\mathcal{L}\big(r_\s(\o)\mathcal{N}(\theta^n(\o)\big)\P(d\o)
=\sum_{n=1}^\infty\int\sum_{\s\in\Si_n}r_\s(\o)\mathcal{L}(\mathcal{N}(\theta^n(\o))\P(d\o)\\
&=&\sum_{n=1}^\infty\int\sum_{\s\in\Si_n}r_\s(\o)\int\mathcal{L}(\mathcal{N}(\o'))\P(d\o')\P(d\o)=0\, ,
\end{eqnarray*}
since under the regularity condition \eqref{regular SSRS} the inner integral vanishes. Here we have used that the random sets $\mathcal{N}(\theta^n(\o))$ are independent of the events up to the step $n$ and have the same distribution as $\mathcal{N}(\o)$.\big)

Now we can formulate our main result. In the sequel, the occurring essential limits and suprema are always meant with respect to
Lebesgue-a.a.\ arguments.
\begin{thms}\label{maintheorem}
Let $k\in\{0,1,\ldots,d\}$ and let $F$ be a homogeneous random fractal satisfying the Uniform Strong Open Set Condition \eqref{USOSC} with basic set $O\subset\R^d$ and $\E N<\infty$. Let $R>\sqrt{2}|O|$. For $k\le d-2$ we additionally suppose the following:
\begin{itemize}
\item[{\rm (i)}] if $d\geq 4$, then $F$ is regular in the sense of \eqref{regular SSRS},
\end{itemize}
\begin{itemize}
\item[{\rm (ii)}]
for all $r_0\in(0,R)$,
$$\E\esup_{r_0\le r\le R}\limits\,\max_{\s\in\Sb(r)}C_k^{\var}\bigg(F(r),\partial (F_\s(r))\cap\partial\big(\bigcup_{\s'\in\Si(r),~ \s'\ne\s}F_{\s'}(r)\big)\bigg)<\infty\, ,$$
\item[{\rm (iii)}]
there is a constant $C>0$ such that with probability 1,
$$\E\bigg[\max_{\s\in\Sb(r)}\, r^{-k}\,C_k^{\var}\bigg(F(r),\partial (F_\s(r))\cap\partial\big(\bigcup_{\s'\in\Si(r),~ \s'\ne\s}F_{\s'}(r)\big)\bigg)\bigg|\sharp(\Sb(r))\bigg]\le C$$
for Lebesgue almost all $r\in(0,R]$.
\end{itemize}
Let $L>0$ and set for almost all $r>0$,
$$R_{k,L}(r):=\E C_k(F(r))-\E\sum_{i=1}^N\1_{(0,L r_i]}(r)\, C_k\big(F_i(r)\big)\,.$$
Then the following assertions hold:
\begin{itemize}
\item[{\rm (I)}] If the measure $\mu$ is non-lattice, then
$$C_{k,F}^{\fr}:=\elim_{\ep\rightarrow 0}\limits\ep^{D-k}\E\,C_k(F(\ep))=\frac{1}{\eta}\int_0^{L} r^{D-k-1}R_{k,L}(r)\, dr\, .$$

\item[{\rm (II)}] If the measure $\mu$ is lattice with constant $c$, then for almost all \ $s\in[0,c)$
  $$\lim_{n\rightarrow\infty}e^{(k-D)(s+nc)}\E\,C_k\big(F(e^{-(s+nc)})\big)=\frac{1}{\eta}\sum_{m=0}^\infty e^{(k-D)(s+mc)}R_{k,L}\big(e^{-(s+mc)}\big).
  $$
	
\item[\rm{(III)}] In general,
$$\overline{C}_{k,F}^{\fr}:=\lim_{\delta\rightarrow 0}\frac{1}{|\ln\delta|}\int_\delta^1\ep^{D-k}\E\, C_k(F(\ep))~\ep^{-1}d\ep = \frac{1}{\eta}\int_0^{L} r^{D-k-1}R_{k,L}(r)\, dr.$$
\end{itemize}
\end{thms}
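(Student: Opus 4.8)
The strategy is to recast the three assertions as consequences of the classical Renewal Theorem applied to the rescaled mean curvature $Z(t):=e^{(k-D)t}\,\E\,C_k(F(e^{-t}))$. Writing $h(r):=\E\,C_k(F(r))$, the definition of $R_{k,L}$ is just the identity $h(r)=R_{k,L}(r)+\E\sum_{i=1}^N\1_{(0,Lr_i]}(r)\,C_k(F_i(r))$. Since $F_i=f_i(F^{(1)})$ with $f_i$ a similarity of ratio $r_i$, the motion invariance and the homogeneity of degree $k$ of the curvature measures give $C_k(F_i(r))=r_i^k\,C_k(F^{(1)}(r/r_i))$; as $F^{(1)}$ is independent of the first-level IFS $(f_1,\dots,f_N)$ and distributed as $F$, conditioning on that IFS yields $\E\sum_i\1_{(0,Lr_i]}(r)C_k(F_i(r))=\E\sum_i\1_{(0,Lr_i]}(r)\,r_i^k\,h(r/r_i)$. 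Passing to the variable $r=e^{-t}$, using $|\ln r_i|=-\ln r_i$ together with the key identity $r_i^k\,e^{-(D-k)|\ln r_i|}=r_i^D$, and truncating at $t_0:=\ln(1/L)$, this turns into the renewal equation $W=\tilde z+W*\mu$, where $W(t):=Z(t)\1_{[t_0,\infty)}(t)$, the source is $\tilde z(t):=e^{(k-D)t}R_{k,L}(e^{-t})\1_{[t_0,\infty)}(t)$, and $\mu$ is the measure \eqref{eq:mu-def}, which is a probability measure with mean $\eta$ by \eqref{minkdim}.

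Before the Renewal Theorem can be invoked, I would check that $W$ is the (unique, locally bounded, measurable) solution, i.e., that $Z$ is measurable and bounded. Measurability of all occurring random curvatures follows from Lemma~\ref{lem:measurability} and Remark~\ref{rems: large k}. For the bound $h(r)=O(r^{k-D})$ I would use the stopped self-similarity \eqref{ssstop}, $F=\bigcup_{\s\in\Si(r)}F_\s$: for every $\s\in\Si(r)$ one has $r/r_\s\ge R>\sqrt2\,|O|\ge\sqrt2\,|F^{(|\s|)}|$, so Lemma~\ref{lems:large distances} gives $|C_k(F_\s(r))|\le C_k^{\var}(F_\s(r))=r_\s^k\,C_k^{\var}(F^{(|\s|)}(r/r_\s))\le c\,r^k$; by locality of the curvature measures, $C_k(F(r))$ differs from $\sum_{\s\in\Si(r)}C_k(F_\s(r))$ only by corrections supported on $\partial F_\s(r)\cap\partial(\bigcup_{\s'\ne\s}F_{\s'}(r))$ for $\s\in\Sb(r)$, which are controlled by conditions (ii) and (iii). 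Together with the renewal estimate $\E\,\sharp\Si(r)=O(r^{-D})$ this yields $h(r)=O(r^{k-D})$, hence $Z$ bounded.

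The crux of the proof is the direct Riemann integrability of $\tilde z$, which I expect to be the main obstacle. For small $r$ all indicators in $R_{k,L}$ equal $1$, so $R_{k,L}(r)=\E[C_k(F(r))-\sum_iC_k(F_i(r))]$, which by locality is exactly (minus) the expected curvature carried by the overlaps of the first-level pieces $F_i(r)$. Resolving these overlaps through the stopped decomposition identifies them with the boundary-code contributions $C_k^{\var}(F(r),\partial F_\s(r)\cap\partial(\bigcup_{\s'\ne\s}F_{\s'}(r)))$, $\s\in\Sb(r)$, appearing in (ii) and (iii). Condition (iii) then supplies a uniform per-piece bound of order $r^k$ and, combined with the sub-$D$ growth of $\E\,\sharp\Sb(r)$ afforded by the USOSC (which prevents $F$ from concentrating near $\partial O$), forces $\tilde z(t)=O(e^{-\gamma t})$ for some $\gamma>0$; condition (ii) controls the suprema of $\tilde z$ over compact $t$-ranges; and the required almost-everywhere continuity of $\tilde z$ comes from the weak continuity of the curvature measures at regular distances (Lemma~\ref{lem: contcurvmeas}), the regularity assumption (i) ensuring \eqref{Reg*}, and dominated convergence against the $O(r^k)$ envelope. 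The delicacy lies in the passage from first-level overlaps to the stopped boundary codes and the simultaneous verification of the magnitude, local-supremum and continuity bounds demanded by direct Riemann integrability.

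With the renewal equation in place and $\tilde z$ directly Riemann integrable, the three assertions follow from the corresponding forms of the Renewal Theorem. If $\mu$ is non-lattice, the key renewal theorem gives $W(t)\to\frac1\eta\int_0^\infty\tilde z(s)\,ds$, and the change of variables $r=e^{-s}$ turns the right-hand side into $\frac1\eta\int_0^Lr^{D-k-1}R_{k,L}(r)\,dr$; reading the convergence of $W(t)=Z(t)$ back through $\ep=e^{-t}$ and using Lemma~\ref{lem: contcurvmeas} to identify it with an essential limit gives (I). If $\mu$ is lattice with span $c$, the lattice renewal theorem yields the arithmetic-progression sum (II). Finally, the averaged statement (III) follows from the Ces\`aro version of the renewal theorem, which holds regardless of the lattice/non-lattice dichotomy and in particular shows that the resulting value is independent of the choice of $L$.
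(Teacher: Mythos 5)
Your proposal is correct and follows essentially the same route as the paper: the renewal equation for $e^{(k-D)t}\E C_k(F(e^{-t}))$ obtained from the first-level scaling identity, the exponential decay of the source term via conditions (ii)--(iii) together with the USOSC-based bound on the number of boundary codes $\sharp\Sb(r)$, almost-everywhere continuity via Lemma~\ref{lem: contcurvmeas} and the regularity assumption (i), and the non-lattice/lattice/averaged forms of the Renewal Theorem. The only (immaterial) deviations are that you truncate directly at $t_0=\ln(1/L)$ rather than proving the case $L=R$ and transferring via Lemma~\ref{lem:R2}, and that you work with expectations from the outset rather than first establishing the pathwise identity on the set $T$ of good arguments as in Lemma~\ref{ts}.
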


\begin{rems}\label{rem:partial}
In conditions (ii) and (iii) the boundary signs $\partial$ can be omitted, since $\Int F_\sigma(r)\subset\Int F(r)$ for any $\sigma\in\St$ and the curvature measures are concentrated on the boundary of the set $F(r)$. Similarly as in the deterministic case (see \cite[Example 4.10]{Wi11}) one can construct an example of a homogeneous random fractal where these conditions are not satisfied.
\end{rems}
\begin{rems}\label{rem:R}
It is a consequence of the statement that the limit expressions in (I), (II) and (III) do not depend on the choice of the constant $L$, which gives some flexibility in applications, see Section~\ref{sec:ex}. This independence can also be seen directly using the self-similarity of $F$, cf.\ Lemma~\ref{lem:R2}. The proof of Theorem~\ref{maintheorem} will be given for the choice $L=R$, where $R$ is the constant appearing in the conditions (ii) and (iii). It is easily seen that, if these two conditions are satisfied with some $R>\sqrt{2}|O|$, then they are also satisfied with any other constant $\tilde R>\sqrt{2}|O|$ instead of $R$. (Indeed, this is obvious for $\tilde R<R$, since in this case the suprema in (ii) are taken over a smaller range of values and also the expectation in (iii) needs to be bounded for a smaller range of values $r$ only. For $\tilde R>R$, this follows from Lemma~\ref{lems:large distances}, which implies that, for almost all $R<r\leq\tilde R$, almost surely
\begin{align*}
   C_k^{\var}\bigg(F(r),\partial (F_\s(r))\cap\partial\big(\bigcup_{\s'\in\Si(r),~ \s'\ne\s}F_{\s'}(r)\big)\bigg)\leq C_k^{\var}\big(F(r)\big)\leq c_k(R)\tilde R^k.
\end{align*}
Thus, if conditions (ii) and (iii) hold with $R$, they also hold with $\tilde R>R$, the latter possibly with a larger constant $C$.)
This shows that a proof of Theorem~\ref{maintheorem} for some $L>\sqrt{2}|O|$ (e.g.\ for $L=R$) implies indeed the validity of the stated formulas for any $L>\sqrt{2}|O|$.
The argument in Lemma~\ref{lem:R2} below shows that the latter restriction can be relaxed to $L>0$.
\end{rems}
\begin{rems}\label{rem:det-and-rand-rec-case}
The special case of deterministic self-similar sets satisfying OSC is included in Theorem~\ref{maintheorem}. In this case, statement and formulas reduce to the ones obtained in \cite{Wi08,Za11} (and in \cite{Ga00} for the case $k=d$). The formulas in \cite[Thm.~2.3.6]{Wi08} are stated with $L=1$ and the ones in \cite[Thm.~2.3.8]{Za11} correspond to the choice $L=R$, but Remark~\ref{rem:R} applies similarly in these situations.

Note also that the formulas for the mean fractal curvatures are structurally equal to the ones obtained for the fractal curvatures (almost surely and in the mean) for self-similar random sets in \cite{Ga00, Za11}. Indeed, if $F$ is a homogeneous random fractal as in Theorem~\ref{maintheorem} generated by some random IFS, and $K$ denotes the random self-similar set generated by the same random IFS, then the formulas for their mean fractal curvatures coincide, except that in the integrand $R_{k,L}$ the set $F$ has to be replaced by $K$.  It turns out that in certain situations, both functions coincide, see Example~\ref{ex:homSG}.
In general, this is probably not true.
\end{rems}

\section{Proofs}\label{proofs}
The proof of Theorem \ref{maintheorem} is split into several steps. We start with the main part in which the problem is reduced to an application of the classical Renewal theorem. In order to verify the assumptions of this theorem, we will show that the function arising in the renewal equation is Lebesgue-a.e.\ continuous and bounded by a directly Riemann integrable function. For the first property condition (ii) is used and for the second one condition (iii). The required estimates are shown in a sequence of lemmas. Part of the estimates can be reduced to Lemma \ref{lems:large distances} (see Lemma \ref{second summand} below). The others follow from Lemmas \ref{lems:xik1} and \ref{lem:mainlemma}, where the latter is essential. Finally, in Lemma \ref{lem:R2} it is shown that the limit formula in the assertion of the theorem is the same for all $L>0$.

\begin{proof}[Proof of Theorem \ref{maintheorem} for $L=R$ and up to the estimates \eqref{bound} and \eqref{locbound}]
For \newline
$L>0$ the function $\xi_k^L$ is $(\P\times\mathcal{L})$-almost everywhere defined by
$$\xi_k^L(r)=\xi_k^L(\o,r):=\1_{(0,L]}(r)C_k(F(r))-\sum_{i=1}^N \1_{(0,Lr_i]}(r)C_k(F_i(r)), \quad (\o,r)\in\Reg_{*}.$$

Below we will see that the expectations of the absolute values of the two summands on the right hand side are finite. Then in view of \eqref{Reg*} the function
$$R_{k,L}=\E \xi_k^L\, $$
in Theorem~\ref{maintheorem} is determined at a.a.\ arguments. If $L=R$, where $R$ is as in Theorem~\ref{maintheorem}, we will omit the subscript $R$ and write $\xi_k(r):=\xi_k^R(r)$ and $$R_k(r)=\E \xi_k(r)\, .$$ By the motion invariance and scaling property of $C_k$, we get for $(\o,r)$ as above,
$$\xi_k(r)=\1_{(0,R]}(r)C_k(F(r))- \sum_{i=1}^N \1_{(0,R]}(r/r_i)r_i^k C_k(F^{(1)}(r/r_i))\,.$$
In order to translate the problem into the language of the Renewal theorem, we substitute $r=Re^{-t}$ and define
\begin{align*}
  Z_k(t)=Z_k(\o,t)&:=\1_{[0,\infty)}(t)e^{(k-D)t} C_k(F(Re^{-t})),\\
  z_k(t)=z_k(\o,t)&:=e^{(k-D)t}\xi_k(Re^{-t}),
\end{align*}
whenever $(\o,Re^{-t})\in\Reg_{*}$.
Note that $z_k(\o,t)=0$ for $t< 0$. We infer from the above relations that for such $(\o,t)$,
$$Z_k(\o,t)=\sum_{i=1}^N r_i^D Z_k(\theta(\o),t-|\ln r_i|)+z_k(\o,t),$$
where $(\theta(\o),t-|\ln r_i(\o)|)\in\Reg_{*}$ for $i=1,\ldots, N(\o)$.

Denote
$$T:=\left\{t>0: \left(\o,Re^{-t}\right)\in\Reg_{*}~{\rm for}~\P\text{- a.a.}~\o\right\}.$$
Let $\mu^{\ast n}$ be the $n$th convolution power of the distribution $\mu=\E\sum_{i=1}^N \1_{(\cdot)}(|\ln r_i|)r_i^D$, if $n\geq 1$, $\mu^{\ast 0}$ the Dirac measure at $0$, and
$$U(t):=\sum_{n=0}^\infty\mu^{\ast n}((0,t]),~t>0.$$
Note that the summands on the right vanish for $n>t/|\ln r_{\max}|$, so that the summation is finite for each $t>0$. (In the sequel $U$-a.a.\ means a.a.\ with respect to the corresponding measure.) Below we will show the following.
\begin{lems}\label{ts}
$t\in T$ implies $t-s\in T$ for $\mu$-a.a.\ $s\leq t$ and for $U$-a.a.\ $s\leq t$.
\end{lems}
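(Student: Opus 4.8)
The plan is to reduce the assertion to a single structural identity that lets one ``peel off'' the first generation of the construction and land in an independent, identically distributed shifted copy, whereupon the weighting by $r_i^D$ in the definition \eqref{eq:mu-def} of $\mu$ matches the probabilistic weighting exactly. Throughout I write $A_t:=\{\o:(\o,Re^{-t})\in\Reg_{*}\}$ and $p(t):=\P(A_t)$, so that by definition $t\in T$ is equivalent to $p(t)=1$; set $p^c:=1-p$. Note that the merely Lebesgue-a.e.\ statement $\mathcal{L}((0,\infty)\setminus T)=0$, which follows from \eqref{Reg*} by Fubini after the change of variables $r=Re^{-t}$, is \emph{not} enough here, since $\mu$ may be singular or lattice; the self-similar structure must be used.

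First I would record how $\Reg$ behaves under the construction. For $\s=i\tau$ with $\tau\in\St^{(1)}$ one has $F_{i\tau}=f_i\big((F_\tau)^{(1)}\big)$, where $(F_\tau)^{(1)}(\o)=F_\tau(\theta\o)$ and $f_i$ is a similarity of ratio $r_i$. Since membership in $\Reg$ is covariant under similarities (positive reach scales with the ratio and the normal-bundle condition $\nor\,\cdot\,\cap\,\rho(\nor\,\cdot\,)=\emptyset$ is invariant under the rotational part), and since $Re^{-t}/r_i=Re^{-(t-|\ln r_i|)}$, one obtains
\[(Re^{-t},F_{i\tau})\in\Reg\iff (Re^{-(t-s_i)},F_\tau(\theta\o))\in\Reg,\qquad s_i:=|\ln r_i|.\]
Ranging over all $\tau\in\St^{(1)}$ yields the one implication actually needed: on the event $A_t$ we have $(\theta\o,Re^{-(t-s_i)})\in\Reg_{*}$ for every $i\le N$.

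The $\mu$-part then follows by a conditioning computation. Since $s_i,r_i,N$ are $\s(\o_1)$-measurable while $\theta\o$ is independent of $\o_1$ and distributed as $\P$, the definition of $\mu$ gives
\[\int_{(0,t]}p^c(t-s)\,\mu(ds)=\E\sum_{i=1}^N\1_{(0,t]}(s_i)\,p^c(t-s_i)\,r_i^D=\E\sum_{i=1}^N\1_{(0,t]}(s_i)\,\1_{(\Reg_{*})^c}(\theta\o,Re^{-(t-s_i)})\,r_i^D.\]
Assuming $t\in T$, i.e.\ $\P(A_t)=1$, the previous step shows the integrand vanishes on $A_t$, hence almost surely; it is dominated by $\sum_i r_i^D$, of expectation $1$, so the integral is $0$. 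As $p^c\ge 0$, this forces $p^c(t-s)=0$ for $\mu$-a.a.\ $s\le t$, which is exactly $t-s\in T$ for $\mu$-a.a.\ $s\le t$. The boundary value $s=t$ is harmless because $(\o,R)\in\Reg_{*}$ almost surely by Lemma~\ref{lems:large distances} (as $|F_\s|\le|O|<R/\sqrt2$ for all $\s$), so $p^c(0)=0$.

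For $U=\sum_{n\ge 0}\mu^{\ast n}$ I would prove $\int\1_{(0,t)}(s)\,\1\{t-s\notin T\}\,\mu^{\ast n}(ds)=0$ for every $n\ge 0$ and $t\in T$ by induction on $n$; the case $n=0$ is trivial and $n=1$ is the step just established. Writing $\mu^{\ast(n+1)}=\mu^{\ast n}\ast\mu$ and applying Fubini, for each $v$ with $t-v\in T$ the inner $\mu$-integral vanishes by the $\mu$-case applied to $t-v$, while the set of $v\in(0,t)$ with $t-v\notin T$ is $\mu^{\ast n}$-null by the induction hypothesis; hence the whole double integral is $0$. Summing over $n$ gives the $U$-statement. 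I expect the genuine content to lie in the scaling/shift equivalence of the second paragraph together with the independence exploited in the third, which together replace the insufficient co-nullity of $T$; the induction for $U$ and the boundary bookkeeping are then routine.
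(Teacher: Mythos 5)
Your proof is correct and follows essentially the same route as the paper's: you peel off the first generation via the scaling covariance of $\Reg$ and the product structure of the probability space, integrate the (complementary) probability against $\mu$ using $\E\sum_{i=1}^N r_i^D=1$, and then iterate (your induction on $n$ is exactly the paper's ``iterated application'') to pass from $\mu$ to $U$. The only differences are cosmetic — you work with $p^c(t-s)=0$ where the paper works with the probability equalling $1$ — plus some extra bookkeeping (the boundary value $s=t$, domination by $\sum_i r_i^D$) that the paper leaves implicit.
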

Then we infer for $t\in T$ from the above equality for $Z_k(\o,t)$ that
\begin{align*}
\E|Z_k(t)|\leq&\int\sum_{i=1}^{N(\o)} r_i(\o)^D\left|Z_k\left(\theta(\o),t-|\ln r_i(\o)|\right)\right|\P(d\o)+\E|z_k(t)|\\
=&\int \E|Z_k(t-s)|\mu(ds)+\E|z_k(t)|\,.
\end{align*}
In view of Lemma \ref{ts}
 we obtain from iterated application of this inequality that
 $$\E|Z_k(t)|\leq\int_0^t\E|z_k(t-s)|dU(s), \quad  t\in T.$$

 Below we will show that there exist some constants $c_k>0$ and $\delta>0$ such that for all $u\in T$ we have
\begin{\eq}\label{bound}
\E|z_k(u)|\le c_k\1_{[0,\infty)}(u)e^{-\delta u}.
\end{\eq}
(In the proof for $k\le d-2$ we will use condition (iii) of Theorem~\ref{maintheorem}.) Furthermore, since in our case $U(t)\le t/|\ln r_{\max}|$ for all $t>0$, we infer from \eqref{bound} and the definition of $Z_k$ that for some constants $d_k$ and $d'_k$,
$$\E|Z_k(t)|\le \int_0^t \E|z_k(t-s)| dU(s)<d_k\,,~t\in T,$$
and consequently,
\begin{\eq}\label{expectation bound}
\E|C_k(F(r))|\leq d'_k\, r^{k-D}|\ln(r/R)|~,~~\mbox{for a.a.}~ 0<r\leq R.
\end{\eq}
This shows in particular the finiteness of the expectations mentioned at the beginning of the proof.

Moreover, we can repeat to above arguments omitting the absolute value signs and replacing the corresponding inequalities by  equalities in order to obtain the renewal equation in the sense of Feller \cite{Fe71}. We get for all $t\in T$
$$\E Z_k(t)=\int_0^t \E Z_k(t-s)\mu(ds)+\E z_k(t)$$
and
\begin{align}
   \label{eq:Z_k-repres} \E Z_k(t)=\int_0^t \E z_k(t-s)\, dU(s).
\end{align}

Below we will also show that for all $0<r_0\le R$,
\begin{\eq}\label{locbound}
\E\esup_{r>r_0}\limits|\xi_k(r)|<\infty,
\end{\eq}
where $\esup$ means the supremum over all arguments where the function is determined.
(In the proof of \eqref{locbound} for $k\le d-2$ we will use condition (ii) of the theorem.)

We now define two auxiliary functions on $(0,\infty)$ by
\begin{align*}
\overline{z}_k(t)&:=\begin{cases}
   \E z_k(t), &{\rm if}~t\in T,\\
   \limsup\limits_{\substack{t'\to t\\t'\in T}}\E z_k(t'), &{\rm if}~t\in (0,\infty)\setminus T,
\end{cases}\\
\overline{Z}_k(t)&:=\int_0^t\overline{z}_k(t-s)dU(s),~t>0.
\end{align*}
Then in view of Lemma \ref{ts} and \eqref{eq:Z_k-repres},
\begin{align}\label{Zbar}
\overline{Z}_k(t)=\E Z_k(t) \quad \text{ for } t\in T.
\end{align}
Assumption (i) and Lemma~\ref{lem: contcurvmeas} imply that the random function $\xi_k(\o,\cdot)$ is continuous in the second argument at all arguments $r$, where $(\o,r)\in\Reg_{*}$. (For $k=d,d-1$ we do not need (i) for this conclusion.)
The dominated convergence theorem (justified by \eqref{locbound}) yields that at each $t\in T$ the function $\overline{z}_k$, which agrees with $e^{(k-D)t}\E \xi_k(Re^{-t})$ at such $t$, is continuous, i.e., $\overline{z}_k$ is continuous Lebesgue-a.e.. Moreover, in view of \eqref{bound}, $\overline{z}_k$ is bounded by a directly Riemann integrable function. Thus, according to  Asmussen \cite[Prop. 4.1, p. 118]{As87}, $\overline{z}_k$ is directly Riemann integrable, too. Therefore the classical Renewal theorem in Feller \cite[p.\ 363]{Fe71} can be applied, which yields that
$$\lim_{t\rightarrow\infty}\overline{Z}_k(t)=\frac{1}{\eta}\int_0^\infty\overline{z}_k(t)\,dt.$$
The right hand side agrees with
$$\frac{1}{\eta}\int_0^\infty\E z_k(t)\,dt=\frac{1}{\eta}\int_0^\infty e^{(k-D)t}R_k(Re^{-t})\, dt,$$
since $\overline{z}_k(t)=\E z_k(t)$ for Lebesgue-a.a.\ $t$.
Multiplying $\j^{D-k}$ in this equation and substituting $r=R e^{-t}$ under the integral, assertion (I) follows in view of
\eqref{Zbar}.

Since $\E Z_k$ is bounded on finite intervals, in the non-lattice case the corresponding average limit in (III) is a consequence.

In the lattice case, the Renewal theorem provides the limit along arithmetic progressions with respect to the lattice constant, here only for those sequences along which the function is determined. This shows assertion (II). This also implies the average convergence (III). For more details we refer to the arguments of Gatzouras at the end of the proof of \cite[Theorem~2.3]{Ga00} in the classical case.
\end{proof}
\begin{proof}[Proof of Lemma \ref{ts}]
For $t\in T$ we have
\begin{align*}
1&=\P\left(\left\{\o: (Re^{-t}, F_\s(\o))\in\Reg~\mbox{for all}~\s\in\St(\o)\right\}\right)\\
&\le\P\left(\left\{\o: (Re^{-t},F_{i\tau}(\o))\in\Reg \text{ for } i=1,\ldots,N(\o)\,,~\tau\in\St(\theta(\o))\right\}\right).
\end{align*}
Using that $(Re^{-t},F_{i\tau}(\o))\in\Reg$ if and only if $(Re^{-(t-|\ln r_i(\o)|)},F_{\tau}(\theta(\o)))\in\Reg$ and the product structure of the basis probability space, we infer
\begin{align*}
1=\int\P\Big(\Big\{\o' :   \big(R&e^{-(t-|\ln r_i(\o)|)}, F_{\tau}(\o')\big)\in\Reg \text{ for }\\
&\qquad\quad i=1,\ldots,N(\o),\tau\in\St(\o')\Big\}\Big)\P(d\o)\,.
\end{align*}
 Hence, we get for $\P$-a.a. $\o$,
$$\P\left(\left\{\o': \left(Re^{-(t-|\ln r_i(\o)|)}, F_{\tau}(\o')\right)\in\Reg \text{ for } \tau\in\St(\o')\right\}\right)=1\,,$$
for $i=1,\ldots,N(\o)$,
and therefore
$$\int\sum_{i=1}^N r_i(\o)^D\P\left(\left\{\o': \left(Re^{-(t-|\ln r_i(\o)|)}, F_{\tau}(\o')\right)\in\Reg \text{ for } \tau\in\St(\o')\right\}\right)\P(d\o)=1,$$
since $\int\sum_{i=1}^N r_i(\o)^D\P(d\o)=1$.
By the definition of the measure $\mu$, this means that
$$\int\P\left(\left\{\o': \left(Re^{-(t-s)}, F_{\tau}(\o')\right)\in\Reg \text{ for } \tau\in\St(\o')\right\}\right)\mu(ds)=1\,.$$
Thus, for $\mu$-a.a. $s\le t$,
$$\P\left(\left\{\o': \left(Re^{-(t-s)}, F_{\tau}(\o')\right)\in\Reg \text{ for } \tau\in\St(\o')\right\}\right)=1,$$
i.e., for $t\in T$ and $\mu$-a.a. $s\le t$ we get $t-s\in T$. Iterated application of this result yields that $t\in T$ implies $t-s\in T$ for $U$-a.a. $s\le t$.
\end{proof}
In order to complete the proof of Theorem~\ref{maintheorem}, it remains to verify \eqref{bound} and \eqref{locbound}.
By the definition of $z_k$, the first one is equivalent to showing that there are constants $c_k'$ and $\delta>0$ such that
\begin{\eq} \label{eq:bound-1}
\E\big|\xi_k(r)\big|\le c_k'\, r^{k-D+\delta}~~\mbox{for a.a.}~r>0\, .
\end{\eq}
 (In particular, if this inequality holds, then \eqref{bound} is satisfied with the same $\delta$ and  $c_k=\j^{k-D+\delta}c_k'$.) Observe that, by Lemma~\ref{lems:large distances}, \eqref{locbound} and \eqref{eq:bound-1} are satisfied for all $r_0>R$ and $r>R$, respectively. In order to prepare the estimates for a.a. $r\leq R$ note first that for a.a.\ $r\in(0,R]$ we get
\begin{align} \label{eq:bound-2}
  \notag
	|\xi_k(r)|&=\bigg| C_k(F(r))
-\sum_{i=1}^N(1-\1_{(\j r_i,\j]}(r))C_k(F_i(r))\bigg|\\
\notag &= \bigg|C_k(F(r))
-\sum_{i=1}^N C_k(F_i(r))+\sum_{i=1}^N \1_{(Rr_i,R]}(r)C_k(F_i(r))\bigg|\\
\notag &\leq \bigg|C_k(F(r))
-\sum_{i=1}^N C_k(F_i(r))\bigg|
+\bigg|\sum_{i=1}^N\1_{(Rr_i,R]}(r)C_k(F_i(r))\bigg|\\
\notag & =:\xi_{k1}(r)+\xi_{k2}(r)\, .
\end{align}
Therefore, it suffices to provide the required bounds for the two summands in the last line instead of $|\xi_k(r)|$. For the second summand the estimates \eqref{locbound} and \eqref{eq:bound-1} follow from the next statement, since we supposed that $\E N<\infty$.

\begin{lems}\label{second summand}
 There exists some constant $b_k$ such that for all $r\in(0,\j]$,
 $$\xi_{k2}(r)\leq b_k Nr^k ,~ a.s.\, .$$
 \end{lems}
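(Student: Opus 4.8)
The plan is to bound each nonvanishing summand $C_k(F_i(r))$ separately by combining the scaling behaviour of the curvature measures with the large-distance estimate of Lemma~\ref{lems:large distances}. The key observation is that on the support of the indicator $\1_{(Rr_i,R]}(r)$ one has $r>Rr_i$, equivalently $r/r_i>R$, so the rescaled parallel radius lands precisely in the regime where Lemma~\ref{lems:large distances} delivers a bound of the correct homogeneity.

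First I would recall that $F_i=f_i(F^{(1)})$, where $f_i$ is a similarity with contraction ratio $r_i$ and $F^{(1)}$ has the same distribution as $F$; in particular $F^{(1)}\subset\overline{O}$, whence $|F^{(1)}|\le|O|$ almost surely. Since for a similarity $g$ with ratio $\lambda$ one has $g(A)(r)=g(A(r/\lambda))$, the motion invariance together with the homogeneity of degree $k$ of $C_k$ yields the scaling identity
$$C_k(F_i(r))=r_i^k\,C_k\big(F^{(1)}(r/r_i)\big).$$
Next, whenever $\1_{(Rr_i,R]}(r)=1$ we have $r/r_i>R$. As $R>\sqrt{2}|O|$, the constant $R/|O|$ exceeds $\sqrt{2}$, and $r/r_i>R\ge(R/|O|)\,|F^{(1)}|$; thus Lemma~\ref{lems:large distances}, applied with its constant equal to $R/|O|$ and with $K=F^{(1)}$, gives
$$\big|C_k\big(F^{(1)}(r/r_i)\big)\big|\le C_k^{\var}\big(F^{(1)}(r/r_i),\R^d\big)\le c_k(R/|O|)\,(r/r_i)^k.$$
Inserting this into the scaling identity, the factors $r_i^k$ cancel and each nonvanishing summand is bounded by $c_k(R/|O|)\,r^k$.

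Finally, summing over $i$ and using $\1_{(Rr_i,R]}\le1$ gives
$$\xi_{k2}(r)\le\sum_{i=1}^N\1_{(Rr_i,R]}(r)\,\big|C_k(F_i(r))\big|\le c_k(R/|O|)\,N\,r^k,$$
so the claim holds with $b_k:=c_k(R/|O|)$. I do not expect a genuine obstacle here: the argument is a direct application of the scaling properties of $C_k$ and of Lemma~\ref{lems:large distances}, and the curvatures are well defined throughout, since the condition $r/r_i>R\ge(R/|O|)|F^{(1)}|$ already forces $\widetilde{F^{(1)}(r/r_i)}$ to have positive reach by that same lemma. The only point requiring care is to verify that the support $r>Rr_i$ of the indicator is exactly what guarantees $r/r_i\ge(R/|O|)|F^{(1)}|$, and hence the large-distance bound, uniformly in $i$ and $r$.
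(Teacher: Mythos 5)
Your proposal is correct and follows essentially the same route as the paper: the same scaling identity $C_k(F_i(r))=r_i^k\,C_k(F^{(1)}(r/r_i))$, the same observation that the indicator forces $r/r_i>R\ge (R/|O|)\,|F^{(1)}|$ with $R/|O|>\sqrt{2}$, and the same application of Lemma~\ref{lems:large distances} to get a bound of order $(r/r_i)^k$ whose $r_i^k$ factor cancels. Your explicit tracking of the constant $R/|O|$ in that lemma is consistent with the paper's remark that its $R$ corresponds to $R|O|$ in the lemma's normalization.
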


\begin{proof}
   Recalling that $F_i(r)=(f_i(F^{(1)}))(r)=f_i(F^{(1)}(r/r_i))$, we have
   \begin{align*}
    \xi_{k2}(r)= \left|\sum_{i=1}^N\1_{(\j r_i,\j]}(r) C_k(F_i(r))\right|
   &\leq \left|\sum_{i=1}^N \1_{(\j r_i,\j]}(r) C_k^{\var}(f_i(F^{(1)}(r/r_i)))\right|\\
   &=\sum_{i=1}^N \1_{(\j r_i,\j]}(r)r_i^k C_k^{\var}(F^{(1)}(r/r_i)).
   \end{align*}
By the choice of $R$, we have for any $r\in (\j r_i,\j]$ that
$r/r_i>R>\sqrt{2}|O|\geq \sqrt{2}|F^{(1)}|$.
 Hence, we can apply Lemma \ref{lems:large distances} and infer that there is a constant $b_k=b_k(\j)$ such that almost surely
$$C_k^{\var}(F^{(1)}(r/r_i)) \leq  b_k (r/r_i)^k$$
for all $r>\j r_i$ and all $i$. Plugging this into the above estimates we get for all $r\le R$,
$$\left|\sum_{i=1}^N\1_{(\j r_i,\j]}(r) C_k(F_i(r))\right|\leq N b_kr^k\, ,$$
i.e., the assertion.
\end{proof}
It remains to prove the required bounds for the first summand $\xi_{k1}$ in the above estimates.
\begin{lems}\label{lems:xik1}
There exist constants $c_k'>0$ and $\delta>0$ such that
\begin{\eq}\label{auxbound-2}
\E\xi_{k1}(r)=\E\big|C_k(F(r))-\sum_{i=1}^N C_k(F_i(r))\big|\le c_k'\, r^{k-D+\delta}\,,~\mbox{for a.a.}~r\in(0,R],
\end{\eq}
and
\begin{\eq}\label{auxbound-3}
\E\esup_{r_0<r\le R}\limits \xi_{k1}(r)<\infty,\quad 0<r_0<R .
\end{\eq}
\end{lems}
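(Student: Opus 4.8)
The plan is to bound $\xi_{k1}(r)=|C_k(F(r))-\sum_{i=1}^N C_k(F_i(r))|$ by the curvature concentrated on the regions where the pieces of $F$ overlap. Since $F(r)=\bigcup_{i=1}^N F_i(r)$ and the curvature measures are locally determined, on the open set $\R^d\setminus\bigcup_{j\ne i}F_j(r)$ the measures $C_k(F(r),\cdot)$ and $C_k(F_i(r),\cdot)$ coincide; hence the signed measure $C_k(F(r),\cdot)-\sum_i C_k(F_i(r),\cdot)$ is concentrated on $\bigcup_{i\ne j}F_i(r)\cap F_j(r)$, and $\xi_{k1}(r)$ is bounded by the total variation of the curvatures on this overlap set. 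Refining the coarse splitting $F=\bigcup_i F_i$ into the Markov-stop decomposition $F=\bigcup_{\s\in\Si(r)}F_\s$ of \eqref{ssstop}, and using that the $f_i(O)$ are pairwise disjoint by UOSC, two pieces $F_\s(r),F_{\s'}(r)$ with $\s\ne\s'$ can meet, for small $r$, only in a neighbourhood of $\partial f(O)$, i.e.\ only for $\s\in\Sb(r)$. Carrying out this localization -- the essential step, furnished by Lemma \ref{lem:mainlemma} -- yields a bound of the form
$$\xi_{k1}(r)\le \sharp(\Sb(r))\cdot\max_{\s\in\Sb(r)}C_k^{\var}\Big(F(r),\partial F_\s(r)\cap\partial\big(\textstyle\bigcup_{\s'\in\Si(r),\,\s'\ne\s}F_{\s'}(r)\big)\Big),$$
precisely in terms of the overlap curvatures appearing in conditions (ii) and (iii).

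To prove \eqref{auxbound-2} I would take expectations and condition on $\sharp(\Sb(r))$. By condition (iii) the conditional expectation of the maximal overlap term is at most $Cr^k$, whence $\E\xi_{k1}(r)\le C\,r^k\,\E\,\sharp(\Sb(r))$. Every $\s\in\Si(r)$ satisfies $r_\s>(r/R)r_{\min}$, so $\sharp(\Sb(r))\le((r/R)r_{\min})^{-D}\sum_{\s\in\Sb(r)}r_\s^D$, and by the renewal identity $\E\sum_{\s\in\Si(r)}r_\s^D=1$ the full count is of order $r^{-D}$. The desired gain of $r^\delta$ therefore reduces to establishing
$$\E\sum_{\s\in\Sb(r)}r_\s^D\lesssim r^{\delta}\qquad\text{for some }\delta>0,$$
i.e.\ that the $D$-weighted mass of the boundary codes is asymptotically negligible. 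This is where the Uniform Strong Open Set Condition \eqref{USOSC} is decisive: the almost sure nonemptiness of $F\cap O$ guarantees, at each renewal, a fixed positive probability that a descendant piece lands strictly inside $O$ and thus leaves the boundary layer, so that the probability of a code persisting as a boundary code decays geometrically in its generation; converting generations into the scale $r$ produces the polynomial rate $r^\delta$. Combining this with the previous estimate gives $\E\xi_{k1}(r)\lesssim r^{k-D+\delta}$, which is \eqref{auxbound-2}.

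For the local bound \eqref{auxbound-3} no decay is needed. For $r>r_0$ the defining inequality $R\,r_{\s||\s|-1}>r$ together with $r_{\s||\s|-1}\le r_{\max}^{|\s|-1}$ bounds the length $|\s|$ of every $\s\in\Si(r)$ by a deterministic constant $n_0(r_0)$, so at scales $r>r_0$ only boundary codes of generation below $n_0$ occur, $\Sb(r)\subset\bigcup_{n<n_0}\Si_n$. The supremum $\esup_{r_0<r\le R}\xi_{k1}(r)$ is then dominated by the overlap curvatures of the boundedly many such codes; their expected supremum is controlled by condition (ii) (applied, via the self-similarity, to each individual piece), and since $\E N<\infty$ keeps the number of codes of bounded generation integrable, one obtains $\E\esup_{r_0<r\le R}\xi_{k1}(r)<\infty$.

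The main obstacle is plainly the negligibility estimate $\E\sum_{\s\in\Sb(r)}r_\s^D\lesssim r^{\delta}$ underlying \eqref{auxbound-2}: it is the point at which the qualitative Uniform Strong Open Set Condition must be upgraded to a quantitative geometric decay rate, and it is also what makes the localization of the curvature difference to the boundary overlaps (Lemma \ref{lem:mainlemma}) the essential ingredient of the whole argument. Throughout, the measurability and weak-continuity statements of Lemma \ref{lem:measurability} and Lemma \ref{lem: contcurvmeas} ensure that all the random curvatures involved are well defined and that the passage to expectations is legitimate.
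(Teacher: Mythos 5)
Your proposal follows essentially the same route as the paper: localize the curvature difference to the overlap regions by local determination, pass to the Markov--stop decomposition $F=\bigcup_{\s\in\Si(r)}F_\s$ so that only the boundary codes $\Sb(r)$ contribute, control the overlap curvatures by conditions (ii) and (iii), and reduce \eqref{auxbound-2} to the counting estimate $\sup_{0<r\le R}\E\big[r^{D-\delta}\sharp(\Sb(r))\big]<\infty$, which is exactly the paper's \eqref{bnumber} and is proved there (inside Lemma~\ref{lem:mainlemma}) from USOSC by the same geometric-decay mechanism you describe. Two points need repair, though. First, your displayed intermediate bound on $\xi_{k1}(r)$ is not literally valid: it omits the variations $C_k^{\var}(F_\s(r))$ of the individual pieces (the paper's terms $S_1(r,\s)$, each $\le c_k(R)\,r^k$ by Lemma~\ref{lems:large distances} since $r\ge R r_\s$) as well as the contributions of the subtracted measures $C_k(F_i(r),\cdot)$ on the overlap set (the paper's $\xi_{k4}$ and $\xi_{k5}$, handled by self-similarity at scale $r/r_i$ and again by Lemma~\ref{lems:large distances}); both are of the same order $r^k$ per boundary code, so the argument survives once they are reinstated, but the inequality as written is wrong. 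Second, your proof invokes conditions (ii) and (iii) for every $k$, whereas Theorem~\ref{maintheorem} assumes them only for $k\le d-2$; the cases $k\in\{d-1,d\}$ therefore need the separate elementary argument the paper gives (the bound $C_d(K(r))\le\const(2r+|K|)^d$ together with the Kneser-type inequality $C_{d-1}(K(r))\le \const r^{-1}C_d(K(r))$). A smaller remark: for \eqref{auxbound-3} the count $\sharp(\Si(r))$ for $r>r_0$ is bounded \emph{deterministically} by volume comparison, see \eqref{finite number}; this is what allows the count to be pulled out of the expected essential supremum, whereas mere integrability of the number of codes via $\E N<\infty$ would not suffice without an additional independence argument.
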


\begin{proof}
For $k\in\{d-1,d\}$ the estimate \eqref{auxbound-3} is a simple consequence of the facts that for any compact $K$ and $r>0$ we have
$C_d(K(r))\le\const(2r+|K|)^d$ and $C_{d-1}(K(r))\le\const r^{-1}C_d(K(r))$. The latter follows from  the Kneser property of the volume function, i.e., $\frac{\partial}{\partial r}\L(K(r))\le\frac{d}{r}\L(K(r))$ for almost all $r$ (see e.g.\cite[Lemma 4.6 and its proof]{RSS09}), together with \cite[Corollary 2.6]{RW10}.

For $k\le d-2$ we decompose the total $k$th curvatures by means of the corresponding curvature measures:
$$
C_k(F(r))  = C_k(F(r),A_r)+C_k\big(F(r), (A_r)^c\big) ,
$$
where
\begin{equation}
A_r:=\bigcup_{j\ne k} f_j(\co)(r)\cap f_k(\co)(r)\, .
\end{equation}
Similarly,
$$C_k(F_i(r))= C_k(F_i(r),A_r)+C_k\big(F_i(r), (A_r)^c\big)\, ,\quad i=1,\ldots,N\,. $$
The local definiteness of the curvature measure $C_k$ implies
$$C_k(F_i(r),(A_r)^c)=C_k(F_i(r),B^i)=C_k(F(r),B^i)$$
and $F(r)\cap(A_r)^c$ is the disjoint union of the sets $B^i:=F_i(r)\setminus A_r$,\, $i=1,\ldots,N$. Hence,
$$
C_k(F(r),(A_r)^c)-\sum_{i=1}^N C_k(F_i(r),(A_r)^c)=0\, .$$
Substituting this in the above expression for $\xi_{k1}$, we infer that
$$\xi_{k1}(r)=\big| C_k(F(r),A_r)-\sum_{i=1}^N C_k(F_i(r),A_r)\big|$$
and so, by the scaling property of $C_k$,
\begin{eqnarray*}
\xi_{k1}(r) & = &\bigg| C_k(F(r),A_r)-\sum_{i=1}^N r^k_i\, C_k\big(F^{(1)}\big(\frac{r}{r_i}\big),f_i^{-1}(A_r)\big)\bigg|\\
& = & \big|\xi_{k3}(r)-\xi_{k4}(r)-\xi_{k5}(r)\big|\leq \left|\xi_{k3}(r)\right|+\left|\xi_{k4}(r)\right|+\left|\xi_{k5}(r)\right|
\end{eqnarray*}
where
\begin{eqnarray*}
\xi_{k3}(r)& := & C_k(F(r),A_r),\\
\xi_{k4}(r)& := & \sum_{i=1}^N r_i^k\, \1_{(0,\j]}\big(\frac{r}{r_i}\big)\, C_k\bigg(F^{(1)}\big(\frac{r}{r_i}\big),f_i^{-1}(A_r)\bigg),\\
\xi_{k5}(r)& := & \sum_{i=1}^N r_i^k\, \1_{(\j,\infty)}\big(\frac{r}{r_i}\big)\, C_k\bigg(F^{(1)}\big(\frac{r}{r_i}\big),f_i^{-1}(A_r)\bigg)\, .
\end{eqnarray*}
Therefore, instead of proving the estimates \eqref{auxbound-2} and \eqref{auxbound-3} for $\xi_{k1}$, it suffices to prove corresponding estimates for $|\xi_{k3}|$, $|\xi_{k4}|$ and $|\xi_{k5}|$ separately.

The arguments for $|\xi_{k5}|$ are the same as for $\xi_{k2}$  taking into account that for any Borel set $B$, $|C_k(F^{(1)}(r),B)|\le C_k^{\var}(F^{(1)}(r),\R^d)$ and applying Lemma \ref{lems:large distances}.

For estimating $\xi_{k3}$ and $\xi_{k4}$ we will use the set inclusions
$$A_r\subset (f(O))^c(r)\, ,\quad f_i^{-1}(A_r)\subset O^c\big(\frac{r}{r_i})\, ,\quad {\rm and}~~ O^c(r)\subset f(O)^c(r)\, .$$
(Recall that $f(O)=\bigcup_{i=1}^Nf_i(O)$ and $O$ is the open set from UOSC.) Then for $|\xi_{k3}|$ the estimates \eqref{auxbound-2} and \eqref{auxbound-3}  follow from \eqref{auxbound-6} and \eqref{auxbound-5}, respectively, in Lemma~\ref{lem:mainlemma} below.

Furthermore, for Lebesgue-a.a.\ $r\in(r_0,R]$, we obtain from the above set inclusions
\begin{eqnarray*}
|\xi_{k4}(r)|
&\le&\sum_{i=1}^N \1_{(0,R]}(\frac{r}{r_i})r_i^kC_k^{\var}\big(F^{(1)}(\frac{r}{r_i}),O^c(\frac{r}{r_i})\big)\\
&\le& N\esup_{r_0<r\le R}\limits C_k^{\var}\big(F^{(1)}(r),O^c(r)\big)\, .
\end{eqnarray*}
Since the random set $F^{(1)}$ is independent of the events in the first step, in particular of $N$, and has the same distribution as $F$ we infer
$$\E\esup_{r_0<r\le R}\limits |\xi_{k4}(r)|\le \E N\, \E \esup_{r_0<r\le R}\limits C_k^{\var}\big(F(r),O^c(r)\big)\, .$$
As $O^c(r)\subset f(O)^c(r)$, the estimate \eqref{auxbound-5} in Lemma~\ref{lem:mainlemma} below yields \eqref{auxbound-3} for $\xi_{k4}$.

Similarly we get
\begin{align*}
\esup_{r_0<r\le R}\limits&\E\left[\left(\frac{|\xi_{k4}(r)|}{r^{k-D+\delta}}\right)\right]\\
&\le \esup_{r_0<r\le R}\limits\E\left[\sum_{i=1}^N r_i^{D-\delta}\1_{(0,R]}(\frac{r}{r_i})
\left( \frac{r_i^{k-D+\delta}}{r^{k-D+\delta}}~ C_k^{\var}\bigg(F^{(1)}\big(\frac{r}{r_i}\big), O^c\big(\frac{r}{r_i}\big)\bigg)\right)\right]\\
&=\esup_{r_0<r\le R}\limits\E\left[\sum_{i=1}^N r_i^{D-\delta}\1_{(0,R]}(\frac{r}{r_i})
\left( \frac{r_i^{k-D+\delta}}{r^{k-D+\delta}}~\E C_k^{\var}\bigg(F^{(1)}\big(\frac{r}{r_i}\big), O^c\big(\frac{r}{r_i}\big)\bigg)\right)\right]\\
&\le\E\left(\sum_{i=1}^N r_i^{D-\delta}\right)
\esup_{0<r\le R}\limits\left(\frac{\E C_k^{\var}\big(F^{(1)}(r), O^c(r)\big)}{r^{k-D+\delta}}\right)\\
&\le \E N~ \esup_{0<r\le R}\limits\left(\frac{\E C_k^{\var}\big(F(r), f(O)^c(r)\big)}{r^{k-D+\delta}}\right)
<\infty
\end{align*}
for some $0<\delta<D$ according to \eqref{auxbound-6} in Lemma \ref{lem:mainlemma} below.
(Via conditional expectation the inner expectations are chosen with respect to the random set $F^{(1)}$. Recall that the latter is independent of the contraction ratios $r_1,\ldots,r_N$ and has the same distribution as $F$.)
This yields the remaining estimate \eqref{auxbound-2} for $\xi_{k4}$.
Thus, Lemma~\ref{lem:mainlemma} completes the proof of Theorem~\ref{maintheorem}.
\end{proof}

\begin{lems}\label{lem:mainlemma} Under the conditions of Theorem~\ref{maintheorem} we have
\begin{\eq}\label{auxbound-5}
\E\esup _{r_0<r<R}\limits C_k^{\var}\big(F(r),f(O)^c(r)\big)<\infty
\end{\eq}
for all $0<r_0<R$, and
\begin{\eq}\label{auxbound-6}
\esup _{0<r<R}\limits\left(\frac{\E C_k^{{\var}}\big(F(r),f(O)^c(r)\big)}{r^{k-D+\delta}}\right)<\infty
\end{\eq}
for some $0<\delta<D$.
\end{lems}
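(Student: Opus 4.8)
The plan is to control the curvature of $F(r)$ in the $r$-collar of $\partial f(O)$ cell by cell over the Markov stop $\Si(r)$, isolating the contribution of the boundary codes $\Sb(r)$, and then to show that these codes are asymptotically rare. I treat the essential range $k\le d-2$, in which conditions (i)--(iii) are part of the hypotheses; for $k\in\{d-1,d\}$ both bounds are available without (ii), (iii) from the volume and surface-area (Kneser) estimates used in Lemma~\ref{lems:xik1} together with the Minkowski-content asymptotics of \cite{Za20}.

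\emph{Decomposition.} Writing $F=\bigcup_{\s\in\Si(r)}F_\s$ as in \eqref{ssstop}, I would split $\partial F(r)\cap f(O)^c(r)$, for each $\s$, into its \emph{private} part $\partial F_\s(r)\setminus\bigcup_{\s'\ne\s}F_{\s'}(r)$ and its \emph{shared} part $\partial F_\s(r)\cap\partial(\bigcup_{\s'\in\Si(r),\,\s'\ne\s}F_{\s'}(r))$. Every point of $\partial F(r)$ lies in one of these, and it meets $f(O)^c(r)$ only for $\s\in\Sb(r)$, so only such $\s$ contribute. On the private part local determination gives $C_k(F(r),\cdot)=C_k(F_\s(r),\cdot)$, and since $f_\s(O)\subset f(O)$ by UOSC we have $f(O)^c(r)\subset f_\s(O)^c(r)=f_\s\big(O^c(r/r_\s)\big)$, whence the scaling property yields $C_k^{\var}(F_\s(r),f(O)^c(r))\le r_\s^k\,C_k^{\var}\big(F^{(|\s|)}(r/r_\s),O^c(r/r_\s)\big)$; as $r/r_\s\ge R>\sqrt2|O|\ge\sqrt2|F^{(|\s|)}|$ for $\s\in\Si(r)$, Lemma~\ref{lems:large distances} bounds this by $b_k r^k$. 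Since the variation measure is subadditive over this finite cover, I obtain the master estimate
\begin{equation}\label{eq:plan-master}
C_k^{\var}\big(F(r),f(O)^c(r)\big)\le\sharp(\Sb(r))\Big[b_k r^k+\max_{\s\in\Sb(r)}C_k^{\var}\big(F(r),\partial F_\s(r)\cap\partial({\textstyle\bigcup_{\s'\in\Si(r),\,\s'\ne\s}F_{\s'}(r)})\big)\Big],
\end{equation}
in which the bracketed maximum is exactly the quantity bounded by conditions (ii) and (iii).

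\emph{The fixed-scale bound \eqref{auxbound-5}.} For fixed $r_0>0$ and $r\in(r_0,R)$ the sets $f_\s(O)$, $\s\in\Si(r)$, are pairwise disjoint, each contains a ball of radius comparable to $r_\s\ge r_{\min}r/R\ge r_{\min}r_0/R$, and the boundary codes cluster within a distance of order $R$ of the fixed compact set $\partial f(O)\subset\co$. A volume/packing argument then bounds $\sharp(\Sb(r))$ by a \emph{deterministic} constant $P_0=P_0(r_0)$ independent of $\o$. Inserting this into \eqref{eq:plan-master}, passing to the essential supremum over $r\in(r_0,R)$ and taking expectations, \eqref{auxbound-5} follows at once from condition (ii).

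\emph{Reduction of \eqref{auxbound-6} and the main obstacle.} Taking expectations in \eqref{eq:plan-master} and conditioning on $\sharp(\Sb(r))$, condition (iii) turns the shared term into $\le C\,r^k\,\E\,\sharp(\Sb(r))$, so that $\E\,C_k^{\var}(F(r),f(O)^c(r))\le(b_k+C)\,r^k\,\E\,\sharp(\Sb(r))$ for a.a.\ $r\in(0,R)$; hence \eqref{auxbound-6} is equivalent to
\begin{equation}\label{eq:plan-red}
\E\,\sharp(\Sb(r))\le c\,r^{\delta-D}\quad\text{for some }\delta\in(0,D).
\end{equation}
Proving \eqref{eq:plan-red} is the heart of the matter. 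Splitting a code $\s=i\tau$ by its first symbol and using $f(O)^c\subset f_i(O)^c$, a boundary code forces the rescaled sub-code $\tau$ to meet the $(r/r_i)$-collar of $\partial O$, whence $\E\,\sharp(\Sb(r))\le\E\sum_{i=1}^N B(r/r_i)$, where $B(\rho):=\E\,\sharp\{\tau\in\Si(\rho):F_\tau(\rho)\cap O^c(\rho)\ne\emptyset\}$ counts cells meeting the $\rho$-collar of $\partial O$. The same splitting gives the self-similar inequality $B(\rho)\le\E\sum_{j\in J(\rho)}B(\rho/r_j)$, where $J(\rho)$ is the random set of first-level cells $f_j(\co)$ lying within $2\rho$ of $\partial O$; with $b(\rho):=\rho^{D-\delta}B(\rho)$ this reads $b(\rho)\le\E\sum_{j\in J(\rho)}r_j^{D-\delta}\,b(\rho/r_j)$. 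Since $B(\rho)\le1$ for $\rho\ge R$, a renewal-type induction on scale shows $b$ bounded as soon as the per-step boundary weight satisfies $\sup_{\rho}\E\sum_{j\in J(\rho)}r_j^{D-\delta}<1$. At $\delta=0$ this weight is $\le\E\sum_{j=1}^N r_j^D=1$, and it is \emph{strictly} below $1$ precisely because USOSC \eqref{USOSC} forces, with positive probability, a code with $f_\s(\co)\subset O$ (cf.\ the Remark after \eqref{USOSC}), such interior cells being excluded from $J$. I expect the principal difficulty to be making this spectral gap \emph{uniform} in $\rho$: near $\rho=R$ the collar engulfs all cells and $J(\rho)$ may be the full index set, so one must iterate the recursion over blocks of $n_0$ construction steps (with $n_0$ supplied by the USOSC Borel--Cantelli argument) to obtain a genuine contraction, and then choose $\delta\in(0,D)$ small by continuity of the weight in the exponent. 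This yields $B(\rho)\le c\,\rho^{\delta-D}$, hence \eqref{eq:plan-red} and \eqref{auxbound-6}; by contrast, the decomposition and the fixed-scale bound are comparatively routine.
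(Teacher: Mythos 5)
Your decomposition over $\Si(r)$ into private and shared parts, the use of Lemma~\ref{lems:large distances} for the private contribution, the deterministic packing bound on $\sharp(\Sb(r))$ for \eqref{auxbound-5}, and the reduction of \eqref{auxbound-6} via condition (iii) to the counting estimate $\sup_{0<r\le R}\E\big[r^{D-\delta}\sharp(\Sb(r))\big]<\infty$ all coincide with the paper's argument and are correct. The gap lies in your treatment of that counting estimate. Iterating your one-step recursion $B(\rho)\le\E\sum_{j\in J(\rho)}B(\rho/r_j)$ over $n_0$ construction steps does \emph{not} reproduce the exclusion supplied by USOSC: after the first step the recursion only records the distance of a cell to the boundary of its \emph{immediate parent} cell, not to $\partial O$, so a level-$n_0$ code $\tau$ survives all $n_0$ rounds as soon as each $f_{\tau|m}(\co)$ lies within $2\rho$ of $\partial f_{\tau|m-1}(O)$. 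For the standard Sierpi\'nski gasket every child triangle contains a vertex of its parent and hence touches the parent's boundary, so \emph{every} code survives at every level and for every $\rho>0$, and the iterated weight is identically $\sum_\tau r_\tau^D=1$ --- even though genuinely interior codes (e.g.\ a level-$3$ cell with $f_\tau(\co)\subset O$) do exist and are exactly what USOSC guarantees. The one-step recursion discards the information needed for the gap, and no amount of iteration recovers it; likewise your claim that the one-step weight at $\delta=0$ is strictly below $1$ because of the USOSC interior code is unfounded, since that code lives at level $n_0$, not level $1$.

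The repair is to set up the block recursion \emph{directly} relative to $\partial O$, which is what the paper does: fix $\r$ and $\alpha$ as in \eqref{rhoalpha}, pass to the rescaled radius $r^*=2R(\alpha r_{\min})^{-1}r$ so that any code passing through a word of $\Si(\r,\alpha)$ (cells at distance $>\alpha$ from $\partial O$) is provably disjoint from the collar $f(O)^c(r)$ and hence excluded from the count, and run the recursion over the Markov stop $\Si(\r)\setminus\Si(\r,\alpha)$. Note also that your demand for a \emph{strict} uniform contraction $\sup_\rho\E\sum_{j\in J(\rho)}r_j^{D-\delta}<1$ is neither achievable near the top scale (where the collar contains all cells) nor necessary: the paper chooses $\delta$ by \eqref{delta} so that the excluded weight equals exactly $1$, derives the sup-invariance $\psi(r)\le\esup_{r'\ge r/\r}\psi(r')$ for $r<\r$, and combines it with the boundedness of $\psi$ on intervals bounded away from $0$ (a consequence of the volume packing bound) to conclude \eqref{bnumber}. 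Everything preceding this step in your proposal is sound and matches the paper.
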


\begin{proof}
We start similarly as in the above  proof choosing the subtree  $\Si(r)$ from \eqref{subtree} instead of $\Si_1$ in the decomposition of the curvature measures.
First note that by a simple volume comparing argument the Uniform Open Set Condition \eqref{UOSC} implies that for all $r>r_0>0$,
\begin{\eq}\label{finite number}
\sharp\big(\Si(r)\big)\le \const (r_0)^{-d}\, ,
\end{\eq}
where $\sharp$ denotes the number of elements of a finite set.

Next recall that $F(r)=\bigcup_{\s\in\Si(r)}F_\s(r)$
for any $r>0$. Since curvature measures are locally defined, this implies
\begin{eqnarray}\label{auxestimate}
\nonumber C_k^{\var}\big(F(r), f(O)^c(r)\big)&=&C_k^{\var}\bigg(F(r), \big(\bigcup_{\s\in\Si(r)}F_\s(r)\big)\cap f(O)^c(r)\bigg)\\
&\le&\sum_{\s\in\Si_b(r)}C_k^{\var}(F(r), F_\s(r))\, ,
\end{eqnarray}
where the boundary code set $\Si_b(r)$ was defined in \eqref{btree}. Note that $\Int F_\sigma(r)\subset\INt F(r)$ for any $\sigma\in\Sigma(r)$, thus $C_k^{\var}(F(r),\Int F_\sigma(r))=0$ for $k\le d-1$ (since the curvature measures are concentrated on the boundary of $F(r)$). Then for $k\in\{d-1,d\}$ we can use the (in)equality \begin{equation}\label{auxFs}
C_k^{\var}\big(F(r),F_\s(r)\big)\le C_k\big(F_\s(r)\big)\, ,~~\s\in\Si_b(r)\, .
\end{equation}
Similarly we obtain for any $\s\in\Si_b(r)$ and $k\le d-2$,
\begin{align*}
&C_k^{\var}(F(r),F_\s(r))\\
&=C_k^{\var}\bigg(F(r),F_\s(r)\setminus\bigcup_{\s'\in\Si(r),\, \s'\ne\s}\hspace{-3mm}F_{\s'}(r)\bigg)
+C_k^{\var}\bigg(F(r),F_\s(r)\cap\bigcup_{\s'\in\Si(r),\, \s'\ne\s}\hspace{-3mm}F_{\s'}(r)\bigg)\\
&\le C_k^{\var}(F_\s(r))+C_k^{\var}\bigg(F(r),\partial F_\s(r)\cap\partial\big(\bigcup_{\s'\in\Si(r),\, \s'\ne\s}\hspace{-3mm}F_{\s'}(r)\big)\bigg)\\
&=r^k_{\s}\,  C_k^{\var}\bigg(F^{(|\s|)}\big(\frac{r}{r_{\s}}\big)\bigg)+ C_k^{\var}\bigg(F(r),\partial F_\s(r)\cap\partial\big(\bigcup_{\s'\in\Si(r),\, \s'\ne\s}\hspace{-3mm}F_{\s'}(r)\big)\bigg)\\
&=: S_1(r,\s)+S_2(r,\s)\, .
\end{align*}
Moreover, for $\s\in\Si_b(r)$ we have $\frac{r}{r_\s}\ge R$.
Hence, the first summand $S_1(r,\s)$ on the right hand side is bounded by a constant in view of Lemma \ref{lems:large distances}, since $|O|\ge|F^{(|\s|)}|$. (This holds also for $k\in\{d-1,d\}$.)
Therefore, these estimates together with \eqref{auxestimate} and \eqref{finite number} lead to
\begin{align*}
\E\esup_{r_0<r\le R}\limits& C_k^{\var}\big(F(r), f(\co)^c(r)\big)\\
&\le \E\esup_{r_0<r\le R}\limits\sum_{\s\in\Sb(r)}S_1(r,\s)
+\E\esup_{r_0<r\le R}\limits\sum_{\s\in\Sb(r)}S_2(r,\s)\\
&\le\const+\const\E\esup_{r_0<r\le R}\limits\max_{\s\in\Sb(r)}S_2(r,\s)\, .
\end{align*}
In the last estimate we have used that $\sharp(\Sb(r))$ is bounded by a constant (depending on $r_0$) according to \eqref{finite number}. Since the last summand is finite by assumption (ii) in our theorem, we obtain the first assertion \eqref{auxbound-5}.

To prove \eqref{auxbound-6}, we argue similarly that
for $0<\delta<D$,
\begin{align*}
&r^{D-\delta-k}\E C_k^{\var}\big(F(r), f(O)^c(r)\big)\\
&\quad\le\E\bigg( r^{D-\delta}\sum_{\s\in\Sb(r)}r^{-k}S_1(r,\s)\bigg)
+\E\bigg( r^{D-\delta}\sum_{\s\in\Sb(r)}r^{-k}S_2(r,\s)\bigg)\\
&\quad\le\E\big( r^{D-\delta}\sharp(\Sb(r))\max_{\s\in\Sb(r)}r^{-k}S_1(r,\s)\big)
+\E\big( r^{D-\delta}\sharp(\Sb(r))\max_{\s\in\Sb(r)}r^{-k}S_2(r,\s)\big)\, .
\end{align*}
Since $\frac{r}{r_\s}\ge R$ for $\s\in\Sb(r)$ and any $0<r\le R$, by Lemma \ref{lems:large distances}, the maximum in the first summand is uniformly bounded by some constant $c=c(R)$ independent of $r$.
Therefore, for all $r\in(0,R]$, the last sum does not exceed
\begin{align*}
c\,&\E\big[ r^{D-\delta}\sharp(\Sb(r))\big]
+\E\big[ r^{D-\delta}\sharp(\Sb(r))\max_{\s\in\Sb(r)}r^{-k}S_2(r,\s)\big]\\
&=c\,\E\big[ r^{D-\delta}\sharp(\Sb(r))\big]
+\E\bigg[ r^{D-\delta}\sharp(\Sb(r))\E\big[\max_{\s\in\Sb(r)}r^{-k}S_2(r,\s)\big)\big|\sharp(\Sb(r))\big]\bigg]\\
&\le c\,\E\big[r^{D-\delta}\sharp(\Sb(r))\big]
+\E\bigg[ r^{D-\delta}\sharp(\Sb(r))\esup_{0<r\le R}\limits \E\big[\max_{\s\in\Sb(r)}r^{-k}S_2(r,\s)\big)\big|\sharp(\Sb(r))\big]\bigg]\\
&\le (c+C)\,\E\big[ r^{D-\delta}\sharp(\Sb(r))\big]\, ,
\end{align*}
since, by condition (iii) in Theorem~\ref{maintheorem}, the supremum of the conditional expectations in the second summand is bounded by $C$. Therefore, it remains to show that for some $0<\delta<D$,
\begin{equation}\label{bnumber}
\sup_{0<r\le R}\limits\E\big[r^{D-\delta}\sharp(\Sb(r))\big]<\infty\, .
\end{equation}
This was already proved in \cite{Za20}. For convenience of the reader we replicate the arguments here.
To this aim we will use USOSC, i.e., UOSC with $O$ such that $\P(F\cap O\ne\emptyset)>0$, which implies that there exist some constants $\alpha>0$ and $0<\r<1$ such that
\begin{\eq}\label{rhoalpha}
\P(\Si(\r,\alpha)\ne\emptyset)>0~~{\rm for}~~
\Si(\r,\alpha):=\{\tau\in\Si(\r): d(x,\partial O)>\alpha\,,\,x\in f_\tau(\overline{O})\}\, .
\end{\eq}
Since $\Si(\r)$ is a Markov stop, one infers from $\E\sum_{i=1}^N r_i^D=1$ that\\ $\E\sum_{\s\in\Si(\r)}r_{\s}^D=1$
(see e.g. \cite[Proposition 1]{Za20}).
Then let $\delta$ be determined by
\begin{\eq}\label{delta}
\E\bigg(\sum_{\tau\in\Si(\r)\setminus\Si(\r,\alpha)} r_\tau^{D-\delta}\bigg)=1\, .
\end{\eq}
We next choose for all $r>0$,
\begin{\eq}\label{es}
r^*:=2R(\alpha r_{\min})^{-1}r \,.
\end{\eq}
Then we get for $i=1,\ldots,N$ and $i\s\in\Si(r^*)$ with $\s=\tau\s'$ for some $\tau\in \Si^{(1)}(\r,\alpha)$ that
$$f_{i\s}(F^{(|i\s|)})(r)\cap f(O)^c(r)=\emptyset\, .$$
(To see this note that for any $x\in f_{i\s}(F^{|i\s|})(r)$ there exists a $y\in f_{i\s}(F^{(|i\s|)})$ such that $|x-y|\le r$. Furthermore, $y\in f_{i\s}(F^{(|i\s|)})\subset f_i(F^{(1)})\subset f_i(\overline{O})\subset f(\overline{O})$, and $d(y,\partial f(O))\ge d(y,\partial f_{i\s}(O))>r_{i\s}\alpha>R^{-1}r^* r_{\min}\alpha=2r$. Consequently, $d(x,\partial f(O)^c)\ge d(y,\partial f(O)^c)-|x-y|>2r-r=r$, i.e., $x\notin (f(O)^c)(r)$.)

From this we obtain
$$\sharp(\Sb(r))\le\sum_{i=1}^N \sharp(\{w\in\Si(r^*): w=i\s,\, \s\in\Xi^{(1)}(r^*/r_i)\})\, ,$$
where the random sets $\Xi(r)$, $r>0$, are defined as
$$\Xi(r):=\Si(r)\setminus\{\s\in \Si(r): \s=\tau\s'~~\mbox{for some}~~\tau\in\Si(\r,\alpha)\}\, .$$
In these notations we get
\begin{eqnarray*}
r^{D-\delta}\E\sharp(\Sb(r))&\le&r^{D-\delta}\E\sum_{i=1}^N\sharp(\Xi^{(1)}(r^*/r_i))\\
&=&(\alpha r_{\min}/2)^{D-d}\E\sum_{i=1}^N r_i^{D-\delta}(r^*/r_i)^{D-\delta}\sharp(\Xi^{(1)}(r^*/r_i))\\
&=& (\alpha r_{\min}/2)^{D-d} \E\sum_{i=1}^N r_i^{D-\delta}(r^*/r_i)^{D-\delta}\E\sharp(\Xi^{(1)}(r^*/r_i))\\
&=& \const\E\sum_{i=1}^N r_i^{D-\delta}\psi(r^*/r_i)\,,
\end{eqnarray*}
where we have used that $\Xi^{(1)}$ is independent of the events in the first step and has the same distribution as $\Xi$ and then the notation $\psi(r):=r^{D-\delta}\E\sharp(\Xi(r))$. Now it suffices to show that the function $\psi$ is bounded.

Similarly as above, using \eqref{rhoalpha} and the definition of $\Xi(r)$ we infer for sufficiently large $M$ and $r<\rho$,
\begin{eqnarray*}
\psi(r)&=&\E\sum_{\tau\in\Si(\r)\setminus\Si(\r,\alpha)}r_\tau^{D-\delta} (r/r_\tau)^{D-\delta}\sharp(\Xi^{(|\tau|)}(r/r_\tau)\\
&=&\sum_{n=1}^M\E\sum_{\stackrel{\tau\in\Si(\r)\setminus\Si(\r,\alpha)}{|\tau|=n}}r_\tau^{D-\delta}(r/r_\tau)^{D-\delta}\sharp(\Xi^{(n)}(r/r_\tau)\\
&=&\sum_{n=1}^M\E\sum_{\stackrel{\tau\in\Si(\r)\setminus\Si(\r,\alpha)}{|\tau|=n}} r_\tau^{D-\delta} \psi(r/r_\tau)
=\E\sum_{\tau\in\Si(\r)\setminus\Si(\r,\alpha)} r_\tau^{D-\delta}\psi(r/r_\tau)\\
&\le&\E\sum_{\tau\in\Si(\r)\setminus\Si(\r,\alpha)}r_\tau^{D-\delta}\esup_{r'\ge r/\r}\limits\psi(r')=\esup_{r'\ge r/\r}\limits\psi(r')\, ,
\end{eqnarray*}
where we have used that the random sets $\Xi^{(n)}(r)$ are independent of the behavior of the system up to the step $n$ via conditional expectation, that they have the same distribution as $\Xi(r)$, and then \eqref{delta}. Hence, $\psi(r)\le\esup_{r'\ge r/\r}\limits\psi(r')$ for any $r<\r$ which implies
$$\sup_{r\ge\r^{k+1}}\psi(r)\le \sup_{r\ge\r^k}\psi(r)~~\mbox{for all}~k\, .$$
Since the function $\psi$ is bounded on any finite interval away from zero it is bounded on $(0,R)$. This completes the proof of \eqref{bnumber}.
\end{proof}

We have proved Theorem~\ref{maintheorem} for the special case $L=R$ (and it was argued in Remark~\ref{rem:R} that this implies, it holds for any $L>\sqrt{2}|O|$.) In order to complete the proof, we need to show that the assertions hold also for $L\in(0,\sqrt{2}|O|]$. Recall that $R_k=R_{k,R}$.
\begin{lems}\label{lem:R2}
Under the conditions of Theorem \ref{maintheorem} for any $0<L<R$,
\begin{align*}
   \int_0^L r^{D-k-1}R_{k,L}(r) dr
  &=\int_0^{R} r^{D-k-1}R_k(r)dr.
\end{align*}
\end{lems}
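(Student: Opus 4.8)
The plan is to prove the sharper statement that for every sufficiently small $\ep>0$,
$$\int_\ep^L r^{D-k-1}R_{k,L}(r)\,dr=\int_\ep^R r^{D-k-1}R_k(r)\,dr,$$
and then to let $\ep\to0$. Both improper integrals converge absolutely: for $r\le L r_{\min}$ one has $\1_{(0,Lr_i]}(r)=1$ for every $i$, so that there $R_{k,L}(r)=\E\big[C_k(F(r))-\sum_{i=1}^N C_k(F_i(r))\big]$ and hence $|R_{k,L}(r)|\le\E\xi_{k1}(r)$; the $L$-independent bound \eqref{auxbound-2} of Lemma~\ref{lems:xik1} then gives $r^{D-k-1}|R_{k,L}(r)|\le c_k' r^{\delta-1}$, which is integrable near $0$ (and likewise for $R_k$, i.e.\ $L=R$). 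On the remaining compact interval $[L r_{\min},L]$ the integrand is bounded by \eqref{expectation bound}. Thus it suffices to establish the displayed identity for small $\ep$.

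Fix $\ep\in(0,L r_{\min})$ and write $g(s):=s^{D-k-1}\E C_k(F(s))$. Starting from the definition of $R_{k,L}$ and interchanging expectation/sum and integration by Fubini (justified below),
$$\int_\ep^L r^{D-k-1}R_{k,L}(r)\,dr=\int_\ep^L g(r)\,dr-\E\sum_{i=1}^N\int_\ep^{Lr_i}r^{D-k-1}C_k(F_i(r))\,dr.$$
In the last integral I would use $F_i(r)=f_i(F^{(1)}(r/r_i))$ together with the homogeneity of degree $k$ of $C_k$ to write $C_k(F_i(r))=r_i^k C_k(F^{(1)}(r/r_i))$, and then substitute $s=r/r_i$. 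Since $F^{(1)}$ is independent of $(r_1,\dots,r_N)$ and has the same distribution as $F$, conditioning on the first IFS turns $C_k(F^{(1)}(s))$ into $\E C_k(F(s))$, so the subtracted term becomes $\E\sum_{i=1}^N r_i^D\int_{\ep/r_i}^L g(s)\,ds$.

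The key step is the telescoping $\int_{\ep/r_i}^L g=\int_\ep^L g-\int_\ep^{\ep/r_i}g$, valid because $\ep\le \ep/r_i\le L$ for $\ep<Lr_{\min}$. As $\int_\ep^L g$ is deterministic and $\E\sum_{i=1}^N r_i^D=1$ by \eqref{minkdim}, the contribution $\E\sum_i r_i^D\int_\ep^L g=\int_\ep^L g$ cancels exactly against the first term, leaving
$$\int_\ep^L r^{D-k-1}R_{k,L}(r)\,dr=\E\sum_{i=1}^N r_i^D\int_\ep^{\ep/r_i}g(s)\,ds.$$
The right-hand side does not involve $L$ at all. Running the identical computation with $L$ replaced by $R$ produces the same right-hand side for every $\ep<Rr_{\min}$; hence the two integrals coincide for all $\ep<Lr_{\min}$, and letting $\ep\to0$ yields the lemma.

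The main obstacle is that one cannot simply split $R_{k,L}$ into its two summands and integrate each separately: by \eqref{expectation bound} the function $g$ behaves like $r^{-1}|\ln r|$ near $0$, so each of $\int_0^L r^{D-k-1}\E C_k(F(r))\,dr$ and its scaled counterpart diverges, and a naive formal manipulation collapses to the meaningless ``$0=0$''. The cut-off at level $\ep$ followed by the telescoping identity is precisely what isolates and cancels these divergent boundary contributions while retaining the finite, manifestly $L$-independent remainder $\E\sum_i r_i^D\int_\ep^{\ep/r_i}g$. The remaining point to verify is the Fubini interchange, which holds because
$$\E\sum_{i=1}^N r_i^D\int_{\ep/r_i}^L s^{D-k-1}\,\E|C_k(F(s))|\,ds\le\int_\ep^L s^{D-k-1}\,\E|C_k(F(s))|\,ds<\infty$$
for $\ep>0$, again by \eqref{expectation bound}.
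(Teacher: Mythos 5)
Your proof is correct, but it is organized differently from the paper's. The paper starts from the pointwise identity $R_{k,L}(r)=R_k(r)+\E\sum_{i=1}^N\1_{(Lr_i,Rr_i]}(r)\,C_k(F_i(r))$ and decomposes $\int_0^L r^{D-k-1}R_{k,L}(r)\,dr=I_1-I_2+I_3$ with $I_1=\int_0^R r^{D-k-1}R_k(r)\,dr$, $I_2=\int_L^R r^{D-k-1}\E C_k(F(r))\,dr$ and $I_3=\int_0^\infty r^{D-k-1}\E\sum_i\1_{(Lr_i,Rr_i]}(r)\,C_k(F_i(r))\,dr$; the whole content is then the single computation $I_3=I_2$, carried out by exactly the scaling--substitution--independence argument you also use. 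Subtracting $R_k$ first means the part of $\E C_k(F(r))$ that diverges near $0$ never enters, so the paper needs only the absolute convergence of $I_1$ (already known from the case $L=R$) together with \eqref{expectation bound}. You instead truncate at $\ep$, evaluate the truncated integral in closed form as the manifestly $L$-free quantity $\E\sum_i r_i^D\int_\ep^{\ep/r_i}s^{D-k-1}\E C_k(F(s))\,ds$, and pass to the limit; this forces you to establish separately the absolute convergence of $\int_0^L r^{D-k-1}R_{k,L}$ near $0$, which you do correctly via \eqref{auxbound-2} (you could equally have observed that $R_{k,L}=R_k$ on $(0,Lr_{\min}]$ and quoted the known convergence of $I_1$). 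Your route is slightly longer but has the merit of producing an explicit $L$-independent expression for the truncated integral and of making visible exactly how the divergent boundary contributions telescope away; the paper's route is shorter because the cancellation is built into the choice of decomposition. The auxiliary points you flag (Fubini, the range condition $\ep<Lr_{\min}$ needed for the telescoping, dominated convergence as $\ep\to0$) are all correctly justified.
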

\begin{proof} Observe that
\begin{eqnarray*} \label{eq:L-R}
  \int_0^L r^{D-k-1}R_{k,L}(r) dr
  =\int_0^R r^{D-k-1}R_k(r)dr-\int_L^R r^{D-k-1}\E C_k(F(r)) dr\\
  + \int_0^\infty r^{D-k-1} \E\sum_{i=1}^N \1_{(Lr_i,Rr_i]}(r) C_k(F_i(r)) dr =:I_1-I_2+I_3\, ,
\notag \\	
\end{eqnarray*}
provided that all three integrals on the right side converge absolutely. For the first integral $I_1$  this is included in the proof of Theorem \ref{maintheorem} for $L=R$, and for $I_2$ it follows from the estimate \eqref{expectation bound}. Replacing in $I_3$ the curvature measure $C_k$ by $|C_k|$ and
interchanging expectation and integration we obtain
\begin{align*}
\int_0^\infty r^{D-k-1}& \E\sum_{i=1}^N \1_{(Lr_i,Rr_i]}(r) |C_k(F_i(r))| dr\\
&= \E\sum_{i=1}^N \int_0^\infty r^{D-k-1} \1_{(Lr_i,Rr_i]}(r)|C_k(F_i(r))| dr\\
&=\E\sum_{i=1}^N \int_0^\infty r^{D-k-1} \1_{(L,R]}(r/r_i) r_i^k |C_k(F(r/r_i))| dr\\
&=\E\sum_{i=1}^N r_i^{D} \int_L^R \bar r^{D-k-1} |C_k(F^{(1)}(\bar r))| d\bar r\\
&=\E\sum_{i=1}^N r_i^D \E \int_L^R \bar r^{D-k-1}|C_k(F^{(1)}(\bar r))| d\bar r\\
&=\int_L^R \bar r^{D-k-1} \E |C_k(F(\bar r))| d\bar r,
\end{align*}
where we have substituted $\bar r=r/r_i$ in the second step and used that $F_i=f_i(F^{(1)})$. For the third step notice that $F^{(1)}$ is independent of the first step of the construction. Hence the expectation can be written  as a product of two expectations, where the first one $\E\sum_{i=1}^N r_i^{D}$ equals 1, by the definition of $D$. In the second one we interchanged again  expectation and integration. The last integral is finite because of \eqref{expectation bound}. Hence, we have shown the existence of the integral $I_3$. Now we can repeat the last transformations omitting the absolute value signs in order to get  $I_3=I_2$. Together with the above decomposition this proves the assertion.
\end{proof}

\section{Sufficient conditions and examples} \label{sec:ex}

We discuss some examples to illustrate our main result and compare it with the known results in the random recursive case.
In order to simplify the verification of the conditions (ii) and (iii) in Theorem~\ref{maintheorem}, we discuss first some simpler sufficient conditions, which may equally be used in the random recursive case. Recall that $O_\s=f_\s(O)$ for any word $\s\in\Sigma_{*}$. As a first step, the following observation clarifies that not too many of the parallel sets $O_\s(r)$,  $\s\in\Sigma(r)$ intersect. Recall also that we assume here a uniform lower bound $r_{\min}$ for the contraction ratios, cf.\ the first lines of Section~\ref{sec:def_main_res}. For a similar estimate in the deterministic case see e.g.~\cite[Lemma~5.3.1]{Wi08}.

\begin{lems} \label{lem:Gamma-bound}
  Let $F$ be a homogeneous random fractal or a random self-similar set satisfying UOSC~\eqref{UOSC} for some open set $O$. Then there is a constant $\Gamma>0$ such that $\P$-a.s.\ for all $r>0$ and all $\s\in\Sigma(r)$,
  \begin{align*}
    \sharp\{\s'\in\Sigma(r): O_\sigma(r)\cap O_{\sigma'}(r)\neq\emptyset\}\leq\Gamma\,.
  \end{align*}
\end{lems}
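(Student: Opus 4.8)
The plan is to reduce the statement to a standard volume–packing argument. The three facts I want to combine are: along the cut‑set $\Sigma(r)$ the open sets $O_{\sigma'}$ are pairwise disjoint; each of them has diameter and volume comparable to $r$; and those meeting a fixed $O_\sigma(r)$ are all confined to a ball of radius $O(r)$ centred in $O_\sigma$. Summing volumes over a disjoint family inside a fixed ball then makes the factor $r^d$ cancel, leaving a bound independent of $r$, $\sigma$ and $\omega$.

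First I would record the size estimates coming directly from \eqref{subtree}. For $\sigma'\in\Sigma(r)$ with $r<R$, the defining inequalities $Rr_{\sigma'}\le r< Rr_{\sigma'||\sigma'|-1}$ together with $r_{\sigma'}\ge r_{\min}r_{\sigma'||\sigma'|-1}$ give $r_{\min}r/R< r_{\sigma'}\le r/R$. Since $R>\sqrt2\,|O|>|O|$, this yields the diameter bound $|O_{\sigma'}|=r_{\sigma'}|O|\le (r/R)|O|<r$ and the volume bound $\L(O_{\sigma'})=r_{\sigma'}^d\,\L(O)> (r_{\min}r/R)^d\,\L(O)$. (For $r\ge R$ one has $\Sigma(r)=\Sigma_0$, a singleton, and the claim is trivial.) Next I would note that $\Sigma(r)$ is an antichain: if $\sigma$ were a proper prefix of $\sigma'$ then $r_\sigma\ge r_{\sigma'||\sigma'|-1}$, forcing $r<Rr_{\sigma'||\sigma'|-1}\le Rr_\sigma\le r$, a contradiction. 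For two distinct words with longest common prefix $\tau$, the images $O_\sigma$ and $O_{\sigma'}$ lie respectively in $f_\tau(f^{\,|\tau|+1}_i(O))$ and $f_\tau(f^{\,|\tau|+1}_j(O))$ with $i\ne j$, which are disjoint by UOSC \eqref{UOSC} and the injectivity of $f_\tau$. Hence $\{O_{\sigma'}:\sigma'\in\Sigma(r)\}$ is $\P$-a.s.\ a disjoint family.

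Then comes the localisation and the count. If $z\in O_\sigma(r)\cap O_{\sigma'}(r)$, fix $x_0\in O_\sigma$; using $d(z,O_\sigma)\le r$, $d(z,O_{\sigma'})\le r$ and the diameter bound, every $w\in O_{\sigma'}$ satisfies $|w-x_0|\le |O_{\sigma'}|+2r+|O_\sigma|<4r$, so $O_{\sigma'}\subset B(x_0,4r)$. Writing $\mathcal I:=\{\sigma'\in\Sigma(r):O_\sigma(r)\cap O_{\sigma'}(r)\ne\emptyset\}$, the disjoint sets $O_{\sigma'}$, $\sigma'\in\mathcal I$, all lie in $B(x_0,4r)$, so with $\kappa_d:=\L(B(0,1))$ a sum of volumes gives $\sharp\mathcal I\cdot (r_{\min}r/R)^d\,\L(O)\le \L(B(x_0,4r))=\kappa_d(4r)^d$, whence $\sharp\mathcal I\le \kappa_d 4^d R^d/(r_{\min}^d\,\L(O))=:\Gamma$, a constant independent of $r$ and of $\sigma$.

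The argument is essentially bookkeeping, and the same reasoning underlies \eqref{finite number}; the one point deserving care is that all three ingredients—uniform two‑sided size control, disjointness along the cut‑set, and confinement to a ball of radius $O(r)$—are needed together precisely so that the $r$-dependence cancels. I expect the only genuine subtlety to be the disjointness step, namely verifying cleanly that $\Sigma(r)$ is an antichain and invoking UOSC at the right construction level; the remaining estimates are uniform in $\omega$ since UOSC holds $\P$-a.s., which is what makes the conclusion hold almost surely for all $r$ at once.
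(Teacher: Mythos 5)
Your proof is correct and follows essentially the same volume-packing argument as the paper: pairwise disjointness of the $O_{\sigma'}$ from UOSC, a uniform lower volume bound $r_{\sigma'}^d\L(O)>(r_{\min}r/R)^d\L(O)$, confinement of all intersecting pieces to a ball of radius $4r$, and division of volumes to get the same constant $\Gamma=\kappa_d4^dR^d/(r_{\min}^d\L(O))$. The only difference is that you spell out the antichain/disjointness step and the trivial case $r\ge R$ explicitly, which the paper leaves implicit.
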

\begin{proof}
We provide a proof here for the homogeneous model. The one for the random recursive model is literally the same, when $\P$ is replaced by the corresponding measure, see, e.g., \cite[eq.~(9)]{Za11}.  Let $O$ be the open set in UOSC. The condition implies that $\P$-a.s.\ for any $r>0$ the family $\{O_\s: \s\in\Sigma(r)\}$ consists of pairwise disjoint sets.
   The definitions of $R>\sqrt{2}|O|$ and $\Sigma(r)$ imply that for any $\s\in\Sigma(r)$
   \begin{align} \label{eq:diamO_leq_r}
     |O_\s|=r_\s|O|\leq \frac 1{\sqrt{2}} Rr_\s\leq \frac r{\sqrt{2}}.
   \end{align}
    Now fix $r>0$ and let $\s,\s'\in\Sigma(r)$ such that $O_\s(r)\cap O_{\s'}(r)\neq \emptyset$. Then, 
    by \eqref{eq:diamO_leq_r},
    \begin{align*}
       O_{\s'}\subset O_\s(2r+|O_{\s'}|)\subset B\left(f_\s(x), (2+\sqrt{2})r\right)\subset B\left(f_\s(x), 4r\right),
    \end{align*}
    where $x$ is an arbitrary point in $O$, and thus $f_\s(x)\in O_\s$. Here $B(y,s)$ denotes the closed ball with center $y$ and radius $s$.
    Recalling that the volume of a ball of radius $4r$ is $\kappa_d (4r)^d$ (where $\kappa_d$ is the volume of the unit ball in $\R^d$) and that the volume of each of the (pairwise disjoint) sets $O_{\sigma'}$, $\s'\in\Sigma(r)$ is bounded from below by
    \begin{align*}
      C_d(O_{\s'})=r_{\sigma'}^d C_d(O)> C_d(O) R^{-d} r_{\min}^d r^d,
    \end{align*}
    we conclude that not too many of the sets $O_{\s'}$ can be contained in the ball $B\left(f_\s(x), 4r\right)$. Hence we obtain that $\P$-almost surely
    \begin{align*}
      \sharp\left\{\s'\in\Sigma(r): O_\sigma(r)\cap O_{\sigma'}(r)\neq\emptyset\right\}
      &\leq \sharp\left\{\s'\in\Sigma(r): O_{\s'}\subseteq B(f_\s(x), 4r)\right\}\\
      &\leq \frac {\kappa_d 4^d R^d}{r_{\min}^d C_d(O)}=:\Gamma,
    \end{align*}
    where the constant $\Gamma$ is independent of $r>0$ and $\s$.
\end{proof}

Observe that, by UOSC, a.s.\ $F\subset\overline{O}$, which implies
   \begin{align}\label{eq:Fr_subset_Or}
      F(r)\subset O(r)\quad  \P\text{-a.s. for any } r>0
   \end{align}
   and hence, $F_\sigma(r)\subset O_\sigma(r)$ for any $\sigma\in\Sigma_*$. Therefore, the assertion of Lemma~\ref{lem:Gamma-bound} does equally hold with the sets $O_\sigma(r)\cap O_{\sigma'}(r)$ replaced by $F_\sigma(r)\cap F_{\sigma'}(r)$.

   Now we formulate the announced conditions that imply (ii) and (iii) in Theorem~\ref{maintheorem}. They are almost sure bounds in contrast to the bounds on expectations in (ii) and (iii). In the deterministic case, a condition very similar to the one in (b) is known to be equivalent to the curvature bounds corresponding to (ii) and (iii), cf.~\cite[Thm.~4.7]{Wi11}.

\begin{props}\label{prop:easier-cond}
Let $k\in\{0,1,\ldots,d\}$ and let $F$ be a homogeneous random fractal or a random self-similar set satisfying USOSC and the regularity condition {\rm(i)} in Theorem~\ref{maintheorem}.
Suppose that one of the following equivalent conditions {\rm (a)--(d)} holds:

\noindent {\rm(a)} there is a constant $c>0$ such that $\P$-a.s.\ for almost all $r>0$ and all $\s\in\Si(r)$,
\begin{align*}
   C_k^{\var}\bigg(F(r),F_\s(r)\cap\bigcup_{\tau\in\Si(r)\setminus\{\s\}}F_{\tau}(r)\bigg)\leq c r^k;
  \end{align*}
{\rm(b)} there is $c'>0$ such that $\P$-a.s.\ for a.a.\ $r>0$ and all $\s,\s'\in\Si(r)$ with $\s\neq\s'$,
\begin{align*}
  C_k^{\var}(F(r), F_{\s}(r)\cap F_{\s'}(r))\leq c' r^k;
\end{align*}
{\rm(c)}
there is $c''>0$ such that $\P$-a.s.\ for a.a.\ $r>0$ and all $\s\in\Si(r)$,
\begin{align*}
  C_k^{\var}(F(r), F_{\s}(r))\leq c'' r^k;
\end{align*}
{\rm(d)} there is $c'''>0$ such that $\P$-a.s.\ for a.a.\ $r>0$ and all $\s,\s'\in\Si(r)$ with $\s\neq\s'$,
\begin{align*}
  C_k^{\var}(F(r), O_{\s}(r)\cap O_{\s'}(r))\leq c''' r^k.
\end{align*}
Then the conditions {\rm (ii)} and {\rm (iii)} in Theorem~\ref{maintheorem} are satisfied.
\end{props}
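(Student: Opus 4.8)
The plan is to first prove the equivalence of (a)--(d) and then read off (ii) and (iii) directly from (a). I work on the full-measure set of $\o$ where $F$ is regular in the sense of (i) (so that all curvature measures $C_k(F_\s(r),\cdot)$, $\s\in\St$, are defined for a.a.\ $r$), where UOSC and the decomposition $F(r)=\bigcup_{\s\in\Si(r)}F_\s(r)$ hold, and where the counting bound of Lemma~\ref{lem:Gamma-bound} is valid; all manipulations below use only the monotonicity and countable subadditivity of the positive measures $C_k^{\var}(F(r),\cdot)$ in the set argument. The argument is identical for the homogeneous and the random self-similar models, since the cited results and the scaling relation $C_k^{\var}(F_\s(r))=r_\s^k C_k^{\var}(F^{(|\s|)}(r/r_\s))$ hold in both. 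I record two facts. First, by Lemma~\ref{lem:Gamma-bound} (in the form stated for the sets $F_\s(r)$ after its proof) at most $\Gamma$ of the sets $F_\tau(r)$, $\tau\in\Si(r)$, meet a given $F_\s(r)$, and likewise at most $\Gamma$ meet a given $O_\s(r)$. Second, exactly as for the term $S_1$ in the proof of Lemma~\ref{lem:mainlemma}, the non-overlap contribution of a single piece is controlled: since $F(r)$ coincides with $F_\s(r)$ on the open set $U:=\R^d\setminus\bigcup_{\s'\ne\s}F_{\s'}(r)$, local determinateness of $C_k$ gives
$$C_k^{\var}\Big(F(r),F_\s(r)\setminus\bigcup_{\s'\ne\s}F_{\s'}(r)\Big)\le C_k^{\var}(F_\s(r))=r_\s^k\,C_k^{\var}\big(F^{(|\s|)}(r/r_\s)\big)\le\const\,r^k,$$
the last inequality because $r/r_\s\ge R>\sqrt2|O|\ge\sqrt2|F^{(|\s|)}|$ and Lemma~\ref{lems:large distances}.

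For the equivalences I would run the cycle (a)$\Rightarrow$(c)$\Rightarrow$(b)$\Rightarrow$(a), and separately (c)$\Rightarrow$(d)$\Rightarrow$(b). For (a)$\Rightarrow$(c) I split, for each $\s\in\Si(r)$,
$$C_k^{\var}(F(r),F_\s(r))=C_k^{\var}\Big(F(r),F_\s(r)\setminus\bigcup_{\s'\ne\s}F_{\s'}(r)\Big)+C_k^{\var}\Big(F(r),F_\s(r)\cap\bigcup_{\s'\ne\s}F_{\s'}(r)\Big),$$
bounding the first summand by $\const\,r^k$ via the recorded non-overlap estimate and the second by $c\,r^k$ via (a). The implications (c)$\Rightarrow$(b) and (d)$\Rightarrow$(b) are pure monotonicity, using $F_\s(r)\cap F_{\s'}(r)\subset F_\s(r)$ and $F_\s(r)\cap F_{\s'}(r)\subset O_\s(r)\cap O_{\s'}(r)$ (the latter since $F_\tau(r)\subset O_\tau(r)$). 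For (b)$\Rightarrow$(a) I rewrite $F_\s(r)\cap\bigcup_{\s'\ne\s}F_{\s'}(r)=\bigcup_{\s'\ne\s}(F_\s(r)\cap F_{\s'}(r))$, apply subadditivity, and note that only the at most $\Gamma$ indices $\s'$ with $F_\s(r)\cap F_{\s'}(r)\ne\emptyset$ contribute, each term being $\le c'r^k$. For (c)$\Rightarrow$(d) I use that $C_k^{\var}(F(r),\cdot)$ is carried by $\bigcup_{\tau\in\Si(r)}F_\tau(r)$ (via $\partial F(r)\subset\bigcup_\tau F_\tau(r)$ for $k\le d-1$, and trivially for $k=d$), so that
$$C_k^{\var}\big(F(r),O_\s(r)\cap O_{\s'}(r)\big)\le\sum_{\tau}C_k^{\var}\big(F(r),F_\tau(r)\big)\le\Gamma\,c''\,r^k,$$
where the sum is effectively over the at most $\Gamma$ indices $\tau$ with $O_\tau(r)\cap O_\s(r)\ne\emptyset$ and each summand is $\le c''r^k$ by (c).

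It remains to derive (ii) and (iii), and here (a) does all the work. By Remark~\ref{rem:partial} the boundary symbols in (ii) and (iii) may be dropped, so the curvature term occurring there is exactly
$$C_k^{\var}\Big(F(r),F_\s(r)\cap\bigcup_{\s'\in\Si(r),\,\s'\ne\s}F_{\s'}(r)\Big),$$
which (a) bounds by $c\,r^k$ uniformly over $\s\in\Sb(r)\subset\Si(r)$ and a.a.\ $r$. Consequently $\max_{\s\in\Sb(r)}C_k^{\var}(F(r),\dots)\le c\,r^k\le c\,R^k$ for $r\le R$, so the essential supremum over $r\in[r_0,R]$ is dominated by the deterministic constant $c\,R^k$, and (ii) follows upon taking expectations. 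Likewise $r^{-k}\max_{\s\in\Sb(r)}C_k^{\var}(F(r),\dots)\le c$ a.s.\ for a.a.\ $r$, so the conditional expectation in (iii) is $\le c$ and (iii) holds with $C=c$.

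The main obstacle I expect is the equivalence (a)$\Leftrightarrow$(c), and specifically the non-overlap estimate: one must pass from $F(r)$ to the single piece $F_\s(r)$ by local determinateness on the open set $U$, then rescale and invoke Lemma~\ref{lems:large distances} to absorb the self-curvature into $\const\,r^k$. Threading (d) into the cycle is the second delicate point, requiring the counting bound of Lemma~\ref{lem:Gamma-bound} together with the fact that $C_k^{\var}(F(r),\cdot)$ is concentrated on $\bigcup_{\tau}F_\tau(r)$; once these are secured, the passage to (ii) and (iii) is immediate.
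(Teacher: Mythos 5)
Your proposal is correct and follows essentially the same route as the paper: the equivalences are established with the same three ingredients (the counting bound of Lemma~\ref{lem:Gamma-bound} for (b)$\Rightarrow$(a) and (c)$\Rightarrow$(d), local determinateness plus the rescaling $C_k^{\var}(F_\s(r))=r_\s^kC_k^{\var}(F^{(|\s|)}(r/r_\s))$ and Lemma~\ref{lems:large distances} for the non-overlap part in (a)$\Rightarrow$(c), and set monotonicity for the remaining implications), and conditions (ii) and (iii) are then read off from (a) exactly as in the paper. The only differences are cosmetic: you arrange the implications in a slightly different cycle, and in (c)$\Rightarrow$(d) you sum directly over the at most $\Gamma$ pieces $F_\tau(r)$ meeting $O_\s(r)\cap O_{\s'}(r)$ rather than first splitting off $F_\s(r)$ as the paper does.
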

\begin{proof}
Again we provide a proof for the homogeneous model, the one for the recursive case being similar. First we show the equivalence of the four conditions {\rm (a)--(d)}. The implications {\rm (a)\,$\Rightarrow$\,(b)}, {\rm (c)\,$\Rightarrow$\,(b)} and {\rm (d)\,$\Rightarrow$\,(b)} are obvious from corresponding set inclusions.

{\rm (b)\,$\Rightarrow$\,(a)}: Suppose (b) holds. Then $\P$-a.s.\ for a.a.\ $r>0$ and any $\s\in\Si(r)$,
  \begin{align*}
     C_k^{\var}\bigg(\,& F(r), F_\s(r)\cap \bigcup_{\tau\in\Si(r)\setminus\{\s\}} F_{\tau}(r)\bigg)
     = C_k^{\var}\bigg(F(r), \bigcup_{\tau\in\Si(r)\setminus\{\s\}}F_\s(r)\cap F_{\tau}(r)\bigg)\\
     &\leq \sum_{\tau\in\Si(r)\setminus\{\s\}} C_k^{\var}\big(F(r),F_\s(r)\cap F_{\tau}(r)\big)\leq \Gamma\cdot c' r^k,
  \end{align*}
  since, by Lemma~\ref{lem:Gamma-bound}, the number of nonzero summands in the last sum is bounded by some constant $\Gamma$ independent of $r$ or $\s$. Moreover, by {\rm(b)}, each of these summands is bounded by $c' r^k$.
 Hence, (b) implies (a) (with constant $c=\Gamma c'$), and therefore both conditions are equivalent.

 {\rm (a)\,$\Rightarrow$\,(c)}: We have $\P$-a.s., for a.a.\ $r>0$ and $\s\in\Sigma(r)$,
 \begin{align*}
 C_k^{\var}&\left(F(r), F_\s(r)\right)\leq C_k^{\var}\left(F(r), F_\s(r)\cap U_{r,\s}\right)+C_k^{\var}\left(F(r), F_\s(r)\setminus U_{r,\s}\right),
  \end{align*}
 where $U_{r,\s}:=\bigcup_{\tau\in\Si(r)\setminus\{\s\}}F_{\tau}(r)$. By {\rm(a)}, the first summand on the right is bounded by $c r^k$. For the second summand we infer that
 \begin{align*}
 C_k^{\var}\left(F(r), F_\s(r)\setminus U_{r,\s}\right)\leq C_k^{\var}\left(F(r), \R^d\setminus U_{r,\s}\right)=C_k^{\var}\left(F_\s(r),\R^d\setminus U_{r,\s}\right),
  \end{align*}
 by the local definiteness, and therefore, by Lemma~\ref{lems:large distances},
 \begin{align*}
 C_k^{\var}\left(F(r), F_\s(r)\setminus U_{r,\s}\right)\leq C_k^{\var}\left(F_\s(r)\right)\leq c_k(R) r^k,
  \end{align*}
  since, by definition of $\Sigma(r)$, $r\geq Rr_\sigma>\sqrt{2}|O|r_\sigma\geq \sqrt{2}|F_\sigma|$. Hence (c) holds (with $c''=c+c_k(R)$), showing the equivalence of (a) and (c).

  {\rm (c)\,$\Rightarrow$\,(d)}: We have $\P$-a.s., for a.a.\ $r>0$ and all $\s,\s'\in\Sigma(r)$ with $\s\neq\s'$, (with $U_{r,\s}$ defined as above 
  and noting that $F(r)=F_\s(r)\cup U_{r,\s}$)
 \begin{align*}
 C_k^{\var}&\left(F(r), O_\s(r)\cap O_{\s'}(r)\right)=C_k^{\var}\left(F(r), O_\s(r)\cap O_{\s'}(r)\cap \left(F_\s(r)\cup U_{r,\s}\right)\right)\\
 &\leq C_k^{\var}\left(F(r), O_\s(r)\cap O_{\s'}(r)\cap F_\s(r)\right)+C_k^{\var}\left(F(r), O_\s(r)\cap O_{\s'}(r)\cap U_{r,\s}\right)\\
 &\leq C_k^{\var}\left(F(r), F_\s(r)\right)+C_k^{\var}\left(F(r), O_\s(r)\cap U_{r,\s}\right)
  \end{align*}
  Due to (c), the first summand is bounded by $c'' r^k$, while for the second summand we get
  \begin{align*}
     C_k^{\var}\left(F(r), O_\s(r)\cap U_{\s,r}\right)&\leq C_k^{\var}\left(F(r), \bigcup_{\tau\in\Si(r)\setminus\{\s\}, O_{\tau}(r)\cap O_\s(r)\neq\emptyset} F_\tau(r)\right)\\
     &\leq \sum_{\tau\in\Si(r)\setminus\{\s\}, O_{\tau}(r)\cap O_\s(r)\neq\emptyset}C_k^{\var}\left(F(r), F_\tau(r)\right).
  \end{align*}
  Now, again by (c), each summand is bounded by $c'' r^k$ and, by Lemma~\ref{lem:Gamma-bound}, the number of summands is bounded by some constant $\Gamma$. Hence (d) holds (with $c'''=(1+\Gamma)c''$).

 It suffices now to show that condition (a) implies (ii) and (iii) in Theorem~\ref{maintheorem}.
  Applying condition (a) to the expectation in condition (ii), we conclude that
  for any $r_0\in(0,R)$, this expectation is bounded from above by
\begin{align*}
\E\esup_{r_0\le r\le R}\limits\,\max_{\s\in\Sb(r)} c r^k\leq c R^k<\infty.
\end{align*}
  Similarly, the conditional expectation in condition (iii) is bounded from above by
  \begin{align*}
\E\bigg[\max_{\s\in\Sb(r)}\, r^{-k}\,c r^k\bigg|\sharp\Sb(r)\bigg]\leq c\Gamma
\end{align*}
for a.a.\ $r\in(0,R)$. Hence conditions (ii) and (iii) are satisfied as claimed.
\end{proof}

\begin{figure}
  \includegraphics[width=0.45\textwidth]{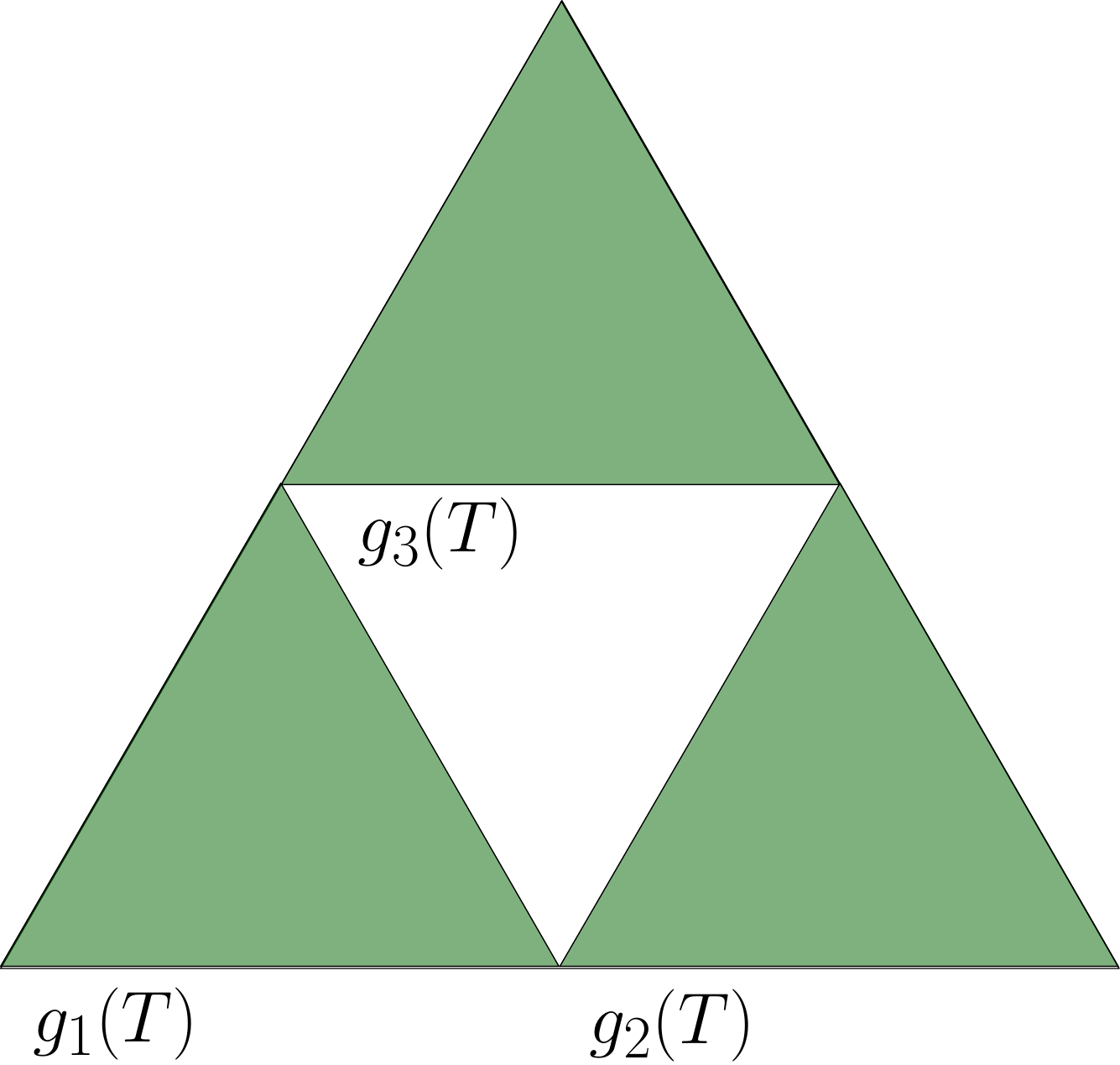}\hfill
  \includegraphics[width=0.45\textwidth]{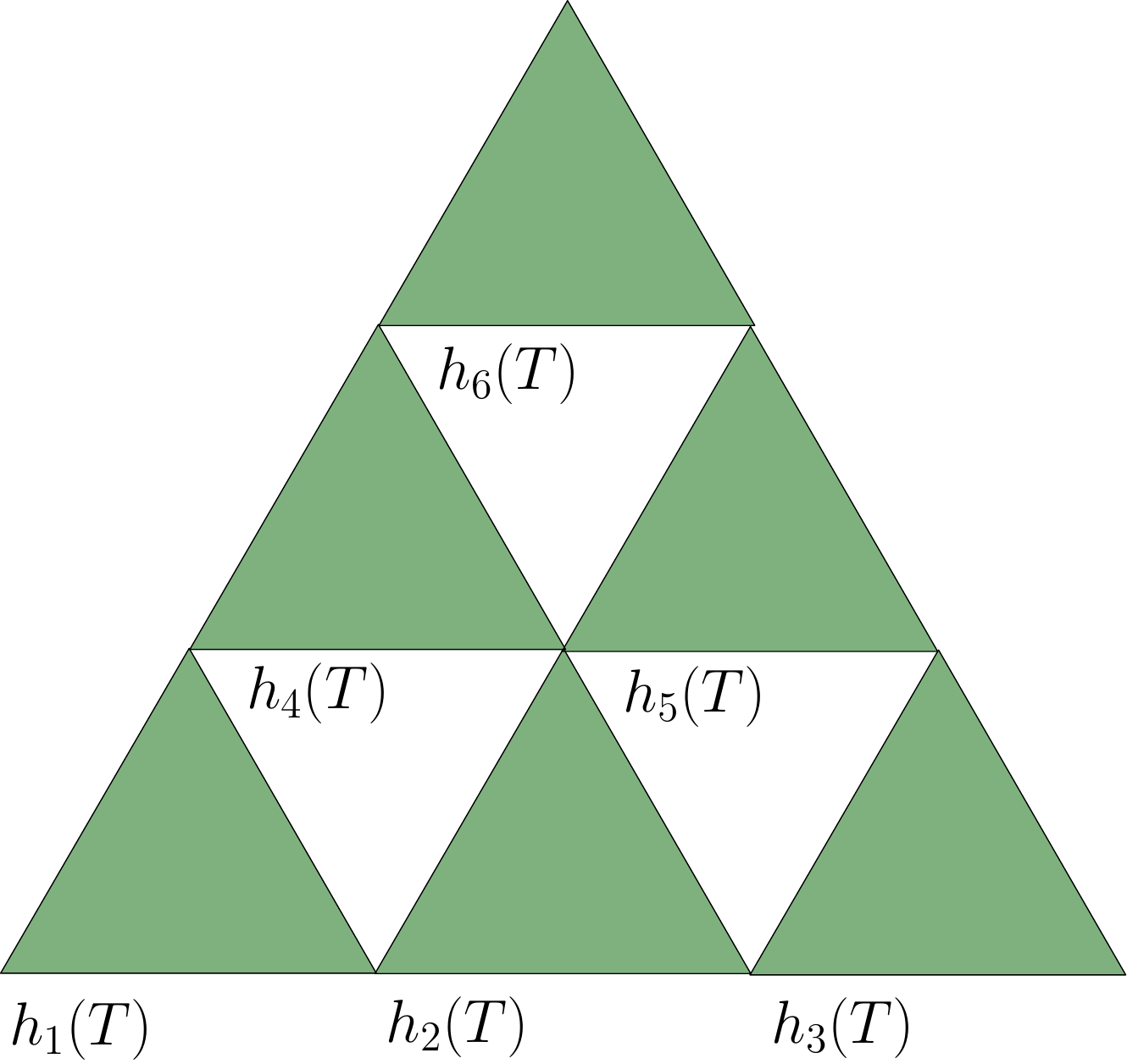}
  \caption{\label{fig:homSG} Illustration of the first construction step of the two IFS $G$ and $H$ in Example~\ref{ex:homSG} used to generate the homogeneous random Sierpi\'nski gasket.}
\end{figure}
\begin{ex} \label{ex:homSG}
  (Homogeneous random Sierpi\'nski gasket)
  Let $G=(g_1,g_2,g_3)$ be the IFS of the standard Sierpi\'nski gasket in $\R^2$, i.e.\ $g_i(x)=\frac 12 x+ t_i$, $x\in \R^2$, $i=1,2,3$, where $t_1=(0,0)$, $t_2=(\frac 12, 0)$ and $t_3=(\frac 14, \frac{\sqrt{3}}4)$, and let $H=(h_1,\ldots,h_6)$ be the IFS of the modified Sierpi\'nski gasket given by the six mappings $h_i(x)=\frac 13 x + s_i$, $x\in \R^2$, $i=1,\ldots,6$, where $s_1=(0,0)$, $s_2=(\frac 13,0)$, $s_3=(\frac 23,0)$, $s_4=(\frac 16,\frac{\sqrt{3}}6)$, $s_5=(\frac 12,\frac{\sqrt{3}}6)$ and $s_6=(\frac 13,\frac{\sqrt{3}}3)$, see also Figure~\ref{fig:homSG}.
  Then, for any $p\in[0,1]$, we consider the distribution $\P_0$ on the space $\Omega_0$ of the primary random IFS with
 \begin{align}\label{eq:ex_RIFS}
    \P_0(\{G\})=p, \qquad \P_0(\{H\})=1-p\, .
  \end{align}
 Let $F$ be the corresponding homogeneous random fractal generated as in Section \ref{sec:def_main_res}. Keep in mind that $\P_0$ and $F$ depend on the parameter $p$, although we suppress this dependence in the notation. The two deterministic self-similar sets generated by $G$ and $H$ are contained in this one-parameter family as marginal cases corresponding to $p=1$ and $p=0$, respectively.

First note that $F$ satisfies UOSC for the open set $O=\INt(T)$, where $T:=\conv\{(0,0),(1,0),(\frac 12, \frac{\sqrt{3}}2)\}$, since both $G$ and $H$ satisfy OSC for $O$.
Furthermore, $3<N\le 6$ with probability 1.
  We claim that the regularity condition (i) in Theorem~\ref{maintheorem} is satisfied. In fact, $\P$-a.s.\ all parallel sets of $F$ are polyconvex. (To see this, note that $\partial T\subset F\subset T$ and that for any $r\geq |T|=|F|$, $F(r)=T(r)$. Hence $F(r)$ is convex for these $r$. Now let $r>0$ be arbitrary and choose $n\in\N$ such that for all $\s\in\Si_n$, $|F_\s|\leq r$. (This is possible, since in each step sets are contracted at least by the factor $\frac 12$.) Then $F_\s(r)=T_\s(r)$ for all $\sigma\in\Sigma_n$ and, since $F(r)=\bigcup_{\s\in\Sigma_n} F_\s(r)$, we have found a representation of $F(r)$ by a finite number of convex sets. Hence $F(r)$ is polyconvex for each $r>0$.)

  In order to verify conditions (ii) and (iii), it suffices to check the assumptions of Proposition~\ref{prop:easier-cond}. First we need to specify the constant $R$. It is convenient to choose $R=\frac 32$. Since $|O|=1$, it clearly satisfies $R>\sqrt{2}|O|$. Let $r>0$ and recall the definition of the family $\Si(r)$. In the present example, the sets $F_\s$, $\s\in\Si(r)$, are all congruent copies of each other. In particular, they have the same diameter $r_\sigma$ and their parallel sets $F_\sigma(r)=T_\sigma(r)$ are convex, since $r\geq Rr_\sigma>\sqrt{2}|F_\sigma|$.

  Recall that $F(r)=\bigcup_{\tau\in\Si(r)} F_\tau(r)$ is a representation of $F(r)$ by convex sets. By Lemma~\ref{lem:Gamma-bound}, there is a constant $\Gamma$ such that locally within a fixed set $F_\s(r)$, $F(r)$ can be represented by at most $\Gamma$ of these sets. Let $\Si_\s(r):=\{\tau\in\Si(r):F_\tau(r)\cap F_\s(r)\neq\emptyset\}$. By the above considerations, $\sharp\Si_\s(r)\leq \Gamma$ and, since curvature measures are locally defined, we get
  \begin{align*}
     C_k^{\var}(F(r), F_{\s}(r)\cap F_{\s'}(r))
     &=C_k^{\var}\bigg(\bigcup_{\tau\in\Si_\s(r)} F_\tau(r), F_{\s}(r)\cap F_{\s'}(r)\bigg)\\
     &\leq C_k^{\var}\bigg(\bigcup_{\tau\in\Si_\s(r)} F_\tau(r)\bigg)\\
     &\leq 2^\Gamma \max_{\tau\in \Si_\s(r)} C_k(F_\tau(r))\\
     &\leq 2^\Gamma \max_{\tau\in \Si_\s(r)} r_\tau^k C_k(F^{(|\tau|)}(r^{-1}_\tau r))\\
     &\leq 2^\Gamma R^{-k} r^k C_k(B(0,1+6R))=:c r^k,
  \end{align*}
where in the third step we used \cite[Lemma~3.1.4]{Wi08} and in the last line the monotonicity of total curvatures for convex sets together with the fact that $|F(r_\tau^{-1}r)|\le|F|+2r_\tau^{-1}\le 1+2Rr_{\min}^{-1}\le 1+6R$.
Hence, by Proposition~\ref{prop:easier-cond}, conditions (ii) and (iii) of Theorem~\ref{maintheorem} are satisfied and we can apply this theorem.

Observe that the scaling exponent $D$ is given by the equation
\begin{align} \label{eq:D-ex}
  3p \cdot 2^{-D}+6(1-p) \cdot 3^{-D}=1,
\end{align}
and that the distribution of the logarithmic contraction ratios is
\begin{align*}
  \mu=\frac{3p}{2^D}\1_{(\cdot)}(\ln(2))+\frac{6(1-p)}{3^D}\1_{(\cdot)}(\ln(3)).
\end{align*}
Hence $\mu$ is non-lattice for $p\in(0,1)$ (and lattice for $p\in\{0,1\}$). Its mean value is given by
\begin{align} \label{eq:eta-ex}
  \eta=\frac{3p}{2^D}\ln(2)+\frac{6(1-p)}{3^D}\ln(3).
\end{align}
In order to compute the limits of the expected rescaled total curvatures, as provided by Theorem~\ref{maintheorem}, it remains, to determine
the functions $R_{k,L}$ for $k=0,1,2$.

For the computations it is convenient to set $L:=\frac{\sqrt{3}}6$ (which is the inradius of an equilateral triangle with sidelength 1).  We split the interval $(0,L)$ into three pieces. For
$r\in[L/2,L)$, all indicators in $R_{k,L}$ are zero almost surely, since $r_i\leq \frac 12$ a.s.\ Hence $R_{k,L}(r)=\E C_k(F(r))=C_k(T(r))$ in this case, since $r$ is large enough such that $F(r)$ has no holes. For $r\in[\frac L3,\frac L2]$, there are two possible situations. If we condition on the event that in the first step of the construction the IFS $G$ is chosen, then there will be a hole in $F(r)$. 
Moreover, the indicators in $R_{k,L}$ are 1, $N=3$, $r_i=\frac 12$ and the sets $F_i(r)=(g_iT)(r)$ are convex.
Since $F(r)=\bigcup_{i=1}^3 F_i(r)$ in this case, the inclusion-exclusion principle implies
\begin{align*}
  \E\big[\xi_k^L(r) \big| (f_1,\ldots,f_N)=G \big]&=\E \bigg[C_k(F(r))-\sum_{i=1}^3 C_k(F_i(r))\bigg|(f_1,\ldots,f_N)=G\bigg]\\
&= - 3 C_k((g_1T)(r)\cap (g_2T)(r)).
\end{align*}
Otherwise, i.e. if $H$ is chosen, we have $\E\big[\xi_k^L(r) \big|(f_1,\ldots,f_N)= H \big]=C_k(T(r))$.
This yields
\begin{align*}
R_{k,L}(r)=(1-p) C_k(T(r)) -p\cdot 3 C_k((g_1T)(r)\cap (g_2T)(r)), \qquad r\in[\frac L3,\frac L2].
\end{align*}
Finally, for $r\in(0,L/3)$, all indicators in $R_{k,L}$ are 1 a.s.\ and recalling that $F=\bigcup_{i=1}^N F_i$, the inclusion-exclusion principle implies that
\begin{align*}
  R_{k,L}(r)=\E \sum_{I\subset [N],\sharp I\geq 2} (-1)^{\sharp I-1} C_k\bigg(\bigcap_{i\in I} F_i(r)\bigg), \qquad r\in(0,\frac L3),
\end{align*}
where the summation is over all subsets $I$ of $[N]:=\{1,\ldots,N\}$ with at least two elements. Conditioning on either $G$ or $H$ being chosen in the first step of the construction and taking into account the symmetries of the resulting intersections, it is easily seen that
\begin{align*}
  -R_{k,L}(r)=p&\cdot 3\ C_k((g_1T)(r)\cap (g_2T)(r)))+(1-p)\cdot\\
  & \left(9 C_k((h_1T)(r)\cap (h_2T)(r))-C_k((h_2T)(r)\cap (h_4T)(r)\cap (h_5T)(r))\right).
\end{align*}
for any $r\in(0,L/3)$. Here we have used that under the condition $(f_1,\ldots,f_N)=G$, $F_1(r)\cap F_2(r)=(g_1T)(r)\cap(g_2T)(r)$ holds for any $r\in(0,L/3)$, and analogous relations under the condition $(f_1,\ldots,f_N)=H$.

Note that all the sets occurring above in the expressions for $R_{k,L}$ are nonempty and convex.
For $k=0$, this gives
\begin{align*}
  R_{0,L}(r)&=\begin{cases}
    5p-8, & r\in(0,L/3),\\
 1-4p,& r\in[L/3,L/2),\\
1,& r\in[L/2,L).
  \end{cases}
\end{align*}
Hence, for $p\in(0,1)$, Theorem~\ref{maintheorem} (I) yields
\begin{align*}
  C_{0,F}^{\fr}=\lim_{\ep\searrow 0} \ep^D \E C_0(F(\ep))=\frac{L^D}{D\cdot\eta}\left(1-(1-p) 3^{2-D} - p 2^{2-D}\right),
\end{align*}
where $L=\frac{\sqrt{3}}6$, $D$ is given by \eqref{eq:D-ex} and $\eta$ by \eqref{eq:eta-ex}. For $p\in\{0,1\}$, part (III) of Theorem~\ref{maintheorem} yields the same value for $\overline{C}_{0,F}^{\fr}$ (while the limit $ C_{0,F}^{\fr}$ does not exist in these two cases). For $p=1$, this recovers the value $-\frac{L^D}{3 \ln 3}$ obtained in \cite[Ex.~2.4.1]{Wi08}.

For $k=1$, recall that $C_1$ is half the boundary length. Therefore, we have
for any $r>0$, $C_1(T(r))=\frac 32 + \pi r$,
\begin{align*}
  C_1((g_1T)(r)\cap (g_2T)(r))=C_1((h_1T)(r)\cap (h_2T)(r))=(\frac 23 \pi+\sqrt{3})r
\end{align*}
and $C_k((h_2T)(r)\cap (h_4T)(r)\cap (h_5T)(r))=C_1(B(0,r))=\pi r$,
which yields
\begin{align*}
  R_{1,L}(r)&=\begin{cases}
    \left(3p(\pi+2\sqrt{3})-(9\sqrt{3}+5\pi)\right)\cdot r=:c_p\cdot r, & r\in(0,L/3),\\
 \frac 32(1-p)+ \underbrace{(\pi-3p(\pi+\sqrt{3})}_{=:\tilde c_p}\cdot r,
 & r\in[L/3,L/2),\\
\frac 32 +\pi\cdot r,& r\in[L/2,L).
  \end{cases}
\end{align*}
Plugging this into the formula (I) in Theorem~\ref{maintheorem}, we obtain
\begin{align*}
  C_{1,F}^{\fr}&=\lim_{\ep\searrow 0} \ep^{D-1} \E C_1(F(\ep))=\frac 1\eta \int_0^L r^{D-2} R_{1,L}(r) dr\\
  &=\frac{L^D}{D\eta}\left(3^{-D}(c_p-\tilde{c}_p) + 2^{-D}(\tilde c_p-\pi)+\pi\right)\\& \quad +\frac{3 L^{D-1}}{2(D-1)\eta}\left(1- (1-p)3^{1-D} -p2^{1-D}\right).
\end{align*}
 For $k=2$, we will derive below after Example~\ref{ex:recursiveSG} that
 $C_{2,F}^{\fr}=\frac 2{2-D} C_{1,F}^{\fr}$.
  \end{ex}

\begin{ex} \label{ex:recursiveSG}
  For $p\in(0,1)$, let $K=K_p$ be the random self-similar set generated by the same IFS-distribution $\P_0=\P_{0,p}$ (given by \eqref{eq:ex_RIFS}) as that for the homogeneous random fractal $F=F_p$ in Example~\ref{ex:homSG}. (Note that for $p\in\{0,1\}$ we get the same deterministic sets as above in Example~\ref{ex:homSG}.) By Remark~\ref{rem:det-and-rand-rec-case}, the mean fractal curvatures $C_{k,K}^{\fr}$ of $K$ are given by the same formulas as those of $F$. One just has to replace in the integrand the function $R_{k,L}$ for $F$ by the corresponding one for $K$. It is not difficult to see that 
  for all $k$ the functions $R_{k,L}$ for $K$ and $F$ coincide. Indeed, going through all the considerations in Example~\ref{ex:homSG}, it is clear that they apply equally to $K$. Hence, by \cite[Theorem~2.3.8]{Za11}, we get the same values for the mean fractal curvatures of $K$ as for $F$, i.e.\
  \begin{align*}
    C_{k,K}^{\fr}=C_{k,F}^{\fr}
  \end{align*}
  for $k=0,1,2$ and any $p\in(0,1)$. Moreover, by the same theorem, the almost sure limits $C_k(K):=\elim_{\ep\searrow 0} \ep^{D-k} C_k(K(\ep))$ exist and coincide with $C_{k,K}^{\fr}$, for any $p\in(0,1)$.
  \end{ex}

  Recall from \cite[Theorem 2.4]{RW13} that, for any bounded set $A\subset\R^d$ and any constants $s\in[0,d]$ and $M\in\R$, the limit $\lim_{r\searrow 0} \frac{C_d(A(r))}{r^{d-s}}$ exists and equals $M$ if and only if the limit $\lim_{r\searrow 0} \frac{ 2 C_{d-1}(A(r))}{(d-s)r^{d-1-s}}$ exists and equals $M$. Therefore, for any random self-similar set $K$ satisfying the assumptions of \cite[Theorem~2.3.8]{Za11}, the (almost sure) existence of the limits $C_k(K)$ for $k=d, d-1$ implies that almost surely
   \begin{align*}
     C_{d-1}(K)=\frac{d-D}2 \lim_{r\searrow 0} \frac{ 2 C_{d-1}(K(r))}{(d-D)r^{d-1-D}}=\frac{d-D}2 \lim_{r\searrow 0} \frac{C_d(K(r))}{r^{d-D}}=\frac{d-D}2 C_d(K)
   \end{align*}
   and thus also in the mean $
    C_{d-1,K}^{\fr}=\frac{d-D}2 C_{d,K}^{\fr}.
  $ 
  For the self-similar random set $K=K_p$ in Example~\ref{ex:recursiveSG} we obtain
     \begin{align*}
    C_{1,K}^{\fr}=\frac{2-D}2 C_{2,K}^{\fr},
  \end{align*}
   for any $p\in(0,1)$. Note that the observed coincidence of the mean fractal curvatures of $K$ with those of the homogeneous random fractal $F$ in Example~\ref{ex:homSG} implies now the same relation for $F$, i.e., we get
   similarly
   \begin{align*}
    C_{1,F}^{\fr}=\frac{2-D}2 C_{2,F}^{\fr},
  \end{align*}
  as claimed above.
  Note that in contrast the limit $\lim_{\ep\searrow 0} \ep^{D-k} C_k(F(\ep))$ (the corresponding $D$-dimensional Minkowski content of $F$) vanishes almost surely. Indeed, the a.s.\ Minkowski (and Hausdorff) dimension $D_H$ of $F$ is known to be given by the equation $\E\ln\bigg(\sum_{i=1}^N r_i^{D_H}\bigg)=0$ and thus strictly smaller than $D$, see e.g.~\cite{Za11}. It is an interesting open question whether the relation $C_{d-1,F}^{\fr}=\frac{d-D}2 C_{d,F}^{\fr}$ holds for any homogeneous random fractal $F$ with $D<d$.



\section{Appendix}

Given a nonempty compact set $K\subset\R^d$, we denote by
$$d_K:\, x\mapsto d(x,K):=\inf_{a\in K}|x-a|,\quad x\in\R^d,$$
the distance function to $K$. Note that $d_K$ is a $1$-Lipschitz function.
Denote by
$$\Sigma_K(x):=\{a\in K:\, |x-a|=d_K(x)\}$$
the set of all closest points of $K$ from $x$. By \cite[Lemma~4.2]{Fu85}, the Clarke subgradient $\partial d_K(x)$ of $d_K$ at $x$ equals
$$\partial d_K(x)=\conv(x-\Sigma_K(x)).$$
Recall that $x$ is called a \emph{regular point} of $d_K$ if $0\not\in\partial d_K(x)$. The above identity thus implies
\begin{equation} \label{reg_point}
x\not\in K\text{ is a regular point of }d_K\text{ if and only if }x\not\in\conv\Sigma_K(x).
\end{equation}
Moreover, Fu \cite{Fu85} showed that if $r>0$ is a \emph{regular value} of $d_K$ (i.e., all points $x$ with $d_K(x)=r$ are regular), then $\reach\widetilde{K(r)}>0$.

Given a nonempty set $A\subset\R^d$, we will use the notation $A^o:=\{y\in\R^d:\, y\cdot a\leq 0 \text{ for all } a\in A\}$ for the \emph{polar cone} of $A$. Note that the polar cone of the polar cone agrees with the generated convex cone:
$$A^{oo}=\left\{\sum_{i=1}^nt_ia_i:\, t_i\geq 0,\, a_i\in A,\, i=1,\dots,n,\, n\in\N\right\}.$$
If $\reach(\widetilde{K(r)},x)>0$, then the tangent and normal cones to $\widetilde{K(r)}$ at $x$ fulfill
\begin{align}
\Tan(\widetilde{K(r)},x)&=(\Sigma_K(x)-x)^o,  \label{Tan}\\
\Nor(\widetilde{K(r)},x)&=(\Sigma_K(x)-x)^{oo}.  \label{Nor}
\end{align}
Indeed, it is easy to see that if $a\in\Sigma_K(x)$, then $\INt B(a,|x-a|)\cap\widetilde{K(r)}=\emptyset$ and hence $a-x\in\Nor(\widetilde{K(r)},x)$, by \cite[Lemma~4.5]{RZ19}. Moreover, if $u\in \INt (\Sigma_K(x)-x)^o$, then the distance to $K$ increases (locally) in direction $u$ from $x$ implying that there is some $\ep>0$, such that the whole segment $[x,x+\ep u]$ is contained in $\widetilde{K(r)}$. Hence, $u\in\Tan(\widetilde{K(r)},x)$.

The above observations imply the characterization \eqref{rp2} of regular pairs $\Reg$ (see \eqref{regular pairs}): If $r>0$ is a regular value of $d_K$ then $\reach\widetilde{K(r)}>0$ and $\Nor(\widetilde{K(r)},x)\cap\rho(\Nor(\widetilde{K(r)},x))$ is trivial by \eqref{Nor}, hence $(r,K)\in\Reg$. For the reverse inclusion, if $(r,K)\in\Reg$ for some $r>0$, then $\reach\widetilde{K(r)}>0$ and if $x$ is a point with $d_K(x)=r$ then $\Nor(\widetilde{K(r)},x)\cap\rho(\Nor(\widetilde{K(r)},x))$ is trivial by the definition of $\Reg$ and, hence, due to \eqref{Nor}, $(\Sigma_K(x)-x)^{oo}$ contains no line through the origin, which implies by \eqref{reg_point} that $x\not\in\conv\Sigma_K(x)$, hence, $x$ is a regular point of $d_K$. Consequently, $r$ is a regular value of $d_K$.

In the rest of this section we provide a proof of Lemma~\ref{lem: contcurvmeas}. Due to \eqref{rp2},
it can be reformulated as follows.
\begin{thms}   \label{P_cont}
Let $K\subset\R^d$ be nonempty and compact and let $r_0>0$ be a regular value of $d_K$. Then, for any $k=0,1,\dots,d$, the weak convergence
$$\lim_{r\to r_0} C_k(K(r),\cdot)=C_k(K(r_0),\cdot)$$
takes place.
\end{thms}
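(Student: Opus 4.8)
The plan is to pass from the curvature measures to the associated unit normal cycles and to exploit that the sets $\tk$ are outer parallel sets of one another. By the reflection relation \eqref{reflect} it suffices to establish the weak convergence $C_k(\tk,\cdot)\to C_k(\widetilde{K(r_0)},\cdot)$. Since each of these measures is obtained by integrating the fixed smooth Lipschitz–Killing form $\vphi_k$ against the normal cycle $N_{\tk}$, and a continuous test function $g$ on $\rd$ enters only through the bounded continuous weight $g\circ\pi_0$, the whole argument amounts to evaluating the single finite-mass current $N_{\widetilde{K(r_0)}}$ (supported on the compact set $\nor\widetilde{K(r_0)}$, which lies over the compact boundary $\partial K(r_0)$) against forms of the type $(g\circ\pi_0)\vphi_k$ after transporting everything onto that one normal bundle. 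The structural input is the elementary identity $\widetilde{K(r_0-t)}=\widetilde{K(r_0)}(t)$ for $t\ge0$: the sets $\tk$ with $r<r_0$ are precisely the outer parallel sets of $A:=\widetilde{K(r_0)}$, which has positive reach since $r_0$ is a regular value (so $\reach A>0$ by \cite{Fu85}, cf.\ \eqref{rp2}).

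For $0\le t<\reach A$ the normal cycle of the outer parallel set $A(t)$ is the push-forward of $N_A$ under the smooth map $F_t(x,n):=(x+tn,n)$ (a standard fact for sets of positive reach, the local Steiner formula in current form). Hence for continuous $g$ one has $\int g\,dC_k(\widetilde{K(r_0-t)},\cdot)=N_A((g\circ(\pi_0+t\pi_1))\vphi_k)$, and as $t\downarrow0$ we have $F_t\to\id$ in $C^1$ with $g(x+tn)\to g(x)$ uniformly on $\nor A$, whence the right-hand side converges to $N_A((g\circ\pi_0)\vphi_k)=\int g\,dC_k(A,\cdot)$. This gives the left-continuity $\lim_{r\uparrow r_0}C_k(K(r),\cdot)=C_k(K(r_0),\cdot)$.

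For $r=r_0+s\downarrow r_0$ the sets $A_s:=\widetilde{K(r_0+s)}$ are instead erosions of $A$, so the push-forward formula cannot be applied to $A$ directly, and the remedy is to reverse the roles. From the identity $A=A_s(s)$ and the outer-parallel formula applied to $A_s$ one gets $N_A=(F_s)_*N_{A_s}$; since $F_s$ is a diffeomorphism of $\nor A_s$ onto $\nor A$ with inverse $G_s(x,n):=(x-sn,n)$, composition yields $N_{A_s}=(G_s)_*N_A$. Writing the test integral as $\int g\,dC_k(A_s,\cdot)=N_A(G_s^*((g\circ\pi_0)\vphi_k))$ and using $G_s\to\id$ in $C^1$ (so that $g(x-sn)\to g(x)$ and $G_s^*\vphi_k\to\vphi_k$ uniformly on $\nor A$) produces the convergence to $\int g\,dC_k(A,\cdot)$, i.e.\ right-continuity. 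The only hypothesis this argument needs is that the outer-parallel formula is legitimate for $A_s$, namely $s<\reach A_s=\reach\widetilde{K(r_0+s)}$ for all small $s>0$.

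The main obstacle is exactly this uniform lower bound on the reach as $r\downarrow r_0$, since reach is neither upper nor lower semicontinuous under Hausdorff convergence of the $\tk$. Regularity of $r_0$ is what supplies it: by \eqref{reg_point}, $0\notin\partial d_K(x)=\conv(x-\Sigma_K(x))$ for every $x$ with $d_K(x)=r_0$, so by compactness of the level set $\dist(0,\partial d_K(x))\ge\mu$ there for some $\mu>0$, and by upper semicontinuity of the Clarke subgradient this persists (with constant $\mu/2$) on a full tube $\{|d_K-r_0|<\eta\}$. I would then show that such a uniform gradient bound forces $\reach\widetilde{K(r)}\ge c(\mu,\eta)>0$ for all $r$ near $r_0$: were it to fail, one could extract along some $r_n\to r_0$ points $z_n$ with two distinct nearest points in $\widetilde{K(r_n)}$ at vanishing distance, and passing to the limit, using the descriptions \eqref{Tan} and \eqref{Nor} of the tangent and normal cones of $\widetilde{K(r)}$ together with $\dist(0,\partial d_K)\ge\mu/2$, would produce a level-$r_0$ point $x$ with $0\in\conv(x-\Sigma_K(x))$, contradicting \eqref{reg_point}. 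With this uniform reach bound in hand, the two one-sided arguments combine to give the full two-sided weak convergence asserted in Theorem~\ref{P_cont}, which also completes the proof of Lemma~\ref{lem: contcurvmeas}.
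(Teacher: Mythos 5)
There is a genuine gap, and it sits at the foundation of your argument: the ``elementary identity'' $\widetilde{K(r_0-t)}=\widetilde{K(r_0)}(t)$ is false, even when every value near $r_0$ is regular, and so is its counterpart $\widetilde{K(r_0)}=\widetilde{K(r_0+s)}(s)$ used for the right-hand limit. The inclusion $\widetilde{K(r_0)}(t)\subset\{d_K\ge r_0-t\}$ does hold because $d_K$ is $1$-Lipschitz, but the reverse inclusion would require $d_K$ to \emph{increase at unit speed} from every point of the shell $\{r_0-t<d_K\le r_0\}$; at a point $x$ with several nearest points the best achievable rate of ascent is only $\dist(0,\partial d_K(x))<1$, so the super-level sets recede strictly faster than the parallel-set flow. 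Concretely, take $K=\{(-1,0),(1,0)\}\subset\R^2$; the only critical value of $d_K$ is $1$, so every $r_0=\sqrt{1+\ep^2}$ with $\ep>0$ is regular (as is a whole neighbourhood of values). The set $\widetilde{K(r_0)}$ is the closed complement of the union of the two open balls $B((\pm1,0),r_0)$, with a corner at $(0,\ep)$. The point $x_t=(0,\ep_t)$ with $\ep_t=\sqrt{(r_0-t)^2-1}$ lies in $\widetilde{K(r_0-t)}$, yet its nearest point in $\widetilde{K(r_0)}$ is the corner $(0,\ep)$, at distance $\ep-\ep_t\approx r_0t/\ep$, which exceeds $t$ as soon as $\ep<r_0$. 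Hence $\widetilde{K(r_0-t)}\ne\widetilde{K(r_0)}(t)$, the normal cycle $N_{\widetilde{K(r_0-t)}}$ is \emph{not} the push-forward of $N_{\widetilde{K(r_0)}}$ under $(x,n)\mapsto(x+tn,n)$, and both one-sided convergence arguments collapse at their first step. (There is no easy repair via $K(r)=K(s)(r-s)$ either: that identity is true, but $K(s)$ itself does not have positive reach, so no push-forward formula is available on that side.)

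What does survive is exactly your last paragraph, and that is essentially the route the paper takes: regularity of $r_0$ gives a positive lower bound for $J_K(x)=\dist(x,\conv\Sigma_K(x))$ (equivalently, for $\dist(0,\partial d_K(x))$ up to normalisation) on the level set $\{d_K=r_0\}$; this propagates to nearby level sets by lower semicontinuity of $J_K$ (Lemma~\ref{semicont}); and the quantitative implication you only sketch by contradiction --- a uniform gradient bound forces a uniform reach bound --- is made precise in Lemma~\ref{L1}, which shows $\reach\widetilde{K(r)}\ge\inf\{J_K(x):x\in\partial K(r)\}$ via a rolling-ball estimate on the tangent cones. Once $\liminf_{r\to r_0}\reach\widetilde{K(r)}>0$ is established, the conclusion does not come from any explicit normal-cycle transport but from Federer's continuity theorem \cite[\S 5.9]{Fe59}: curvature measures are weakly continuous along Hausdorff-convergent sequences of sets with a uniform positive lower bound on the reach, and $\widetilde{K(r)}\to\widetilde{K(r_0)}$ in the Hausdorff metric. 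You would need to replace your push-forward machinery by an appeal to such a continuity theorem; with that substitution, and with Lemma~\ref{L1} supplied in place of the sketched compactness argument, your outline becomes the paper's proof.
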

We start with several auxiliary results.
Given $A\subset\R^d$ nonempty and $r>0$, we denote
$$A_{<r}:=\{y\in\R^d:\, d_A(y)< r\}.$$

\begin{lems}
Let $r>0$, $x\in\R^d$ and a compact set $\Sigma\subset \partial B(x,r)$ be given. Then
$$\INt B(s,|s-x|)\subset\Sigma_{<r},\quad s\in\conv\Sigma.$$
\end{lems}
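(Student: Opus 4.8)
The plan is to fix an arbitrary point $y\in\INt B(s,|s-x|)$, that is, a point with $|y-s|<|s-x|$, and to prove that $d_\Sigma(y)<r$, which is exactly the assertion $y\in\Sigma_{<r}$. Since $\Sigma$ is compact, the distance $d_\Sigma(y)=\inf_{\sigma\in\Sigma}|y-\sigma|$ is attained, so it suffices to exhibit one point $\sigma\in\Sigma$ with $|y-\sigma|<r$. I would argue this by contradiction and therefore assume that $|y-\sigma|\ge r$ for every $\sigma\in\Sigma$.

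The crucial observation is that the map
$$g(z):=|y-z|^2-|x-z|^2=|y|^2-|x|^2-2(y-x)\cdot z$$
is \emph{affine} in $z$, because the quadratic term $|z|^2$ cancels. This is really the only idea needed; everything else is routine.

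Next I would exploit that the hypothesis $\Sigma\subset\partial B(x,r)$ gives $|x-\sigma|=r$ for all $\sigma\in\Sigma$. Combined with the contradiction assumption $|y-\sigma|\ge r$, this yields
$$g(\sigma)=|y-\sigma|^2-r^2\ge 0\quad\text{for all }\sigma\in\Sigma.$$
Writing $s=\sum_i t_i\sigma_i$ as a finite convex combination of points $\sigma_i\in\Sigma$ (in the sense of the definition of $\conv$ used in this section) and invoking the affineness of $g$, I obtain $g(s)=\sum_i t_i\,g(\sigma_i)\ge 0$, i.e.\ $|y-s|^2\ge|x-s|^2$. This contradicts the choice $|y-s|<|s-x|$, and the proof is complete.

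There is essentially no hard step here: the entire content lies in recognizing that the difference of the two squared distance functions is affine, so that a pointwise inequality on the generating set $\Sigma$ automatically propagates to its convex hull. The only point requiring a word of care is the reduction at the start, namely that by compactness of $\Sigma$ the negation of $y\in\Sigma_{<r}$ is precisely the statement $|y-\sigma|\ge r$ for all $\sigma\in\Sigma$, which is what makes the affine comparison applicable.
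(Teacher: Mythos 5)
Your proof is correct and is essentially the paper's argument in algebraic form: the paper takes the perpendicular bisector $H$ of $x$ and $y$ and shows $\Sigma$ lies in the closed half-space $H_+$ containing $x$, which is exactly your set $\{z:\, g(z)\ge 0\}$, with the affineness of $g$ playing the role of the convexity of $H_+$. Both routes end in the same contradiction $|y-s|\ge |x-s|$ against $y\in\INt B(s,|s-x|)$, so there is nothing to add (and the compactness remark at the start is not even needed, since $\inf_{\sigma}|y-\sigma|\ge r$ is equivalent to $|y-\sigma|\ge r$ for all $\sigma$ regardless of attainment).
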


\begin{proof}
Take any points $s\in\conv\Sigma$ and $y\in\INt B(s,|s-x|)$ and assume, for the contrary, that $d(y,\Sigma)\geq r$. Let $H$ be the hyperplane of symmetry of $y$ and $x$ and $H_+$ the closed half space with boundary $H$ and containing $x$. Then
$$\Sigma\subset\partial B(x,r)\cap\widetilde{B(y,r)}\subset H_+.$$
On the other hand, $H$ separates $x$ and $s$, hence, $s\not\in H_+$, which contradicts the assumption $s\in\conv\Sigma$.
\end{proof}

\begin{lems}  \label{L1}
If $K\subset\R^d$ is nonempty and compact and $r>0$ then
$$\reach\widetilde{K(r)}\geq \inf\{J_K(x):\, x\in\partial K(r)\},$$
where
$$J_K: x\mapsto \dist(x,\conv\Sigma_K(x)),\quad x\in\R^d\setminus K.$$
\end{lems}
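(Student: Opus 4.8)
The plan is to derive the reach bound from a rolling-ball criterion. Put $\rho:=\inf\{J_K(x):x\in\partial K(r)\}$; since $\reach\ge 0$ always, we may assume $\rho>0$, and then every $x\in\partial K(r)$ satisfies $J_K(x)\ge\rho>0$, so by \eqref{reg_point} every such $x$ is a regular point with $x\notin\conv\Sigma_K(x)$. Call a unit vector $u$ a \emph{proximal normal} of $X:=\widetilde{K(r)}$ at $x\in X$ if $x$ is a nearest point of $x+\tau u$ in $X$ for some $\tau>0$. I would first record the elementary criterion: \emph{if for every $x\in\partial K(r)$ and every proximal normal $u$ at $x$ the open ball $\INt B(x+\rho u,\rho)$ is disjoint from $X$, then every $z$ with $\dist(z,X)<\rho$ has a unique nearest point in $X$, i.e.\ $\reach X\ge\rho$.} Its proof is a short computation: if $z$ had two nearest points $x_1\ne x_2$ at distance $t<\rho$, then $u_i:=(z-x_i)/t$ are proximal normals, and $x_1\in X$ together with $\INt B(x_2+\rho u_2,\rho)\cap X=\emptyset$ forces $|t\,u_1+(\rho-t)u_2|\ge\rho$; expanding and dividing by $2t(\rho-t)>0$ yields $\langle u_1,u_2\rangle\ge 1$, hence $u_1=u_2$ and $x_1=x_2$, a contradiction.

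Next I would produce the tangent balls from the preceding lemma. For $x\in\partial K(r)$ we have $d_K(x)=r$, so $\Sigma_K(x)\subset\partial B(x,r)$ is compact and the preceding lemma applies with $\Sigma=\Sigma_K(x)$: for every $s\in\conv\Sigma_K(x)$,
\[
\INt B(s,|s-x|)\subset\{y:\ d(y,\Sigma_K(x))<r\}\subset\{d_K<r\},
\]
hence $\INt B(s,|s-x|)$ is disjoint from $X=\overline{\{d_K>r\}}$. Every $s\in\conv\Sigma_K(x)$ satisfies $|s-x|\ge\dist(x,\conv\Sigma_K(x))=J_K(x)\ge\rho$, and for $u:=(s-x)/|s-x|$ the balls are nested, $\INt B(x+\rho u,\rho)\subset\INt B(s,|s-x|)$ (both are tangent to the hyperplane through $x$ orthogonal to $u$, on the same side, with $\rho\le|s-x|$). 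Thus the criterion is verified for every direction $u$ pointing from $x$ into $\conv\Sigma_K(x)$, that is, for every $u$ in the closed convex cone $C_x$ generated by $\Sigma_K(x)-x$.

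The crux, and the step I expect to be the main obstacle, is to show that \emph{every} proximal normal $u$ at $x$ lies in $C_x$; this must be done without invoking \eqref{Nor}, which presupposes positive reach and would be circular here. I would argue by contradiction using the one-sided directional derivative of the distance function, whose sign is governed by $\partial d_K(x)=\conv(x-\Sigma_K(x))$, namely $d_K'(x;w)>0$ iff $\langle a-x,w\rangle<0$ for all $a\in\Sigma_K(x)$. Suppose $u\notin C_x$ and set $u_\perp:=u-\Pi_{C_x}u\ne0$; by Moreau's decomposition $\langle u,u_\perp\rangle=|u_\perp|^2>0$ and $\langle a-x,u_\perp\rangle\le0$ for all $a$. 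Letting $s^*$ be the point of $\conv\Sigma_K(x)$ nearest to $x$ and $v^*:=-(s^*-x)/J_K(x)$, the projection inequality $\langle a-x,s^*-x\rangle\ge J_K(x)^2$ gives $\langle a-x,v^*\rangle\le -J_K(x)<0$ for all $a\in\Sigma_K(x)$. Then for small $t>0$ the direction $w:=u_\perp+t\,v^*$ satisfies $\langle u,w\rangle>0$ and $\langle a-x,w\rangle<0$ for all $a$, so $d_K'(x;w)>0$. Fixing $\tau>0$ with $x$ a nearest point of $y:=x+\tau u$, the point $x'=x+\delta w$ has $d_K(x')>r$ (hence $x'\in X$) and $|y-x'|^2=\tau^2-2\tau\delta\langle u,w\rangle+\delta^2|w|^2<\tau^2$ for all sufficiently small $\delta>0$, contradicting that $x$ is a nearest point of $y$. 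Therefore every proximal normal lies in $C_x$, the criterion of the first paragraph applies, and $\reach\widetilde{K(r)}\ge\rho$, as claimed.
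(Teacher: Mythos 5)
Your proof is correct, but it takes a genuinely different route from the paper's. The paper reduces the claim to the quadratic tangent-cone estimate $d_{\Tan(\widetilde{K(r)},x)}(y-x)\leq |y-x|^2/(2\eta)$ for all $x,y\in\widetilde{K(r)}$ and invokes \cite[Proposition~4.14]{RZ19}; it identifies $\Tan$ and $\Nor$ with the polar cones $(\Sigma_K(x)-x)^o$ and $(\Sigma_K(x)-x)^{oo}$ via \eqref{Tan}--\eqref{Nor}, splits $y-x$ by Moreau's decomposition, and obtains the estimate from the inequality $|y-x-\eta\bar v|\geq\eta$, which comes from exactly the same ball inclusion $\INt B(x+\eta\bar v,\eta)\subset\INt B(s,|s-x|)\subset K_{<r}$ that you use. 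You instead prove a self-contained rolling-ball criterion (two nearest points at distance $t<\rho$ force $\langle u_1,u_2\rangle\geq 1$ --- the computation is right), and then close the loop by showing directly that every proximal normal lies in the cone $C_x$ generated by $\Sigma_K(x)-x$, again via Moreau's decomposition together with the monotonicity of $d_K$ in directions $w$ satisfying $\langle a-x,w\rangle<0$ for all $a\in\Sigma_K(x)$. That last fact is the same unproved-but-standard ingredient the paper uses to justify \eqref{Tan}, so you are on equal footing there. What your version buys is self-containedness: you avoid both the external reach criterion of \cite[Proposition~4.14]{RZ19} and the identities \eqref{Tan}--\eqref{Nor}, which the paper states under a positive-reach hypothesis and whose use in the paper's proof is therefore slightly delicate; your explicit treatment of the ``crux'' removes any appearance of circularity. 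The price is length: the paper's argument is a half-page computation once the cited proposition is granted. One small point worth making explicit in a final write-up: the nearest points $x_1,x_2$ of a point $z\notin\widetilde{K(r)}$ lie in $\partial\widetilde{K(r)}\subset\partial(K(r)^c)=\partial K(r)$, so the infimum over $\partial K(r)$ in the statement does control all points where proximal normals occur.
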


\begin{proof}
Denote $\eta:=\inf\{J_K(x):\, x\in\partial K(r)\}$. If $\eta=0$ there is nothing to prove. Thus, assume that $\eta>0$.
We will show that
\begin{equation} \label{dTan}
d_{\Tan(\widetilde{K(r)},x)}(y-x)\leq\frac{|y-x|^2}{2\eta},\quad x,y\in\widetilde{K(r)}.
\end{equation}
This will imply the assertion, see \cite[Proposition~4.14]{RZ19}.

If $x\in\INt\widetilde{K(r)}$ then $\Tan(\widetilde{K(r)},x)=\R^d$ and \eqref{dTan} obviously holds. Hence, assume that $x\in\partial\widetilde{K(r)}$, which implies that $d_K(x)=r$.

Denote $T:=(\Sigma_A(x)-x)^o$, $N:=T^o$, and recall that $T=\Tan(\widetilde{K(r)},x)$ and $N=\Nor(\widetilde{K(r)},x)$ (see \eqref{Tan}, \eqref{Nor}).
Denote further $u:=p_T(y-x)$ and $v:=p_N(y-x)$ ($p_T$, $p_N$ denote the orthogonal projection to $T$, $N$, respectively), and note that $u\cdot v=0$ since $T,N$ are dual convex cones. We will show that
$$|(y-x)-u|\leq \frac{|y-x|^2}{2\eta}$$
whenever $y\in\widetilde{K(r)}$, which will prove \eqref{dTan}. If $v=0$ then $y-x=u$ and we are done. If $v\neq 0$ denote $\bar{v}:=\frac{v}{|v|}$ and note then, using elementary planar geometry,
$$|(y-x)-u|=(y-x)\cdot\bar{v}.$$
Hence, we have to show that
\begin{equation} \label{E_fin}
(y-x)\cdot\bar{v}\leq \frac{|y-x|^2}{2\eta}.
\end{equation}
Let $s\in\conv\Sigma_K(x)$ be such that $s-x=t\bar{v}$ for some $t>0$. Applying Lemma~\ref{L1} we obtain
$$\INt B(x+\eta\bar{v},\eta)\subset\INt B(s,|s-x|)\subset K_{<r}=\R^d\setminus\widetilde{K(r)},$$
which means that
$$\eta^2\leq |y-x-\eta\bar{v}|^2=|y-x|^2+\eta^2-2\eta(y-x)\cdot\bar{v},$$
and this proves \eqref{E_fin}.
\end{proof}

\begin{lems} \label{semicont}
The function $x\mapsto J_K(x)$ is lower semicontinuous on $\R^d$, i.e.,
$$\liminf_{y\to x}J_K(y)\geq J_K(x).$$
\end{lems}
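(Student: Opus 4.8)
The plan is to prove the inequality $\liminf_{y\to x}J_K(y)\geq J_K(x)$ at every $x\in\R^d$ by a compactness argument, the key point being that although the set-valued map $x\mapsto\conv\Sigma_K(x)$ is not continuous, the nearest-point correspondence $x\mapsto\Sigma_K(x)$ is \emph{closed}: if $y_n\to x$, $a_n\in\Sigma_K(y_n)$ and $a_n\to a$, then $a\in\Sigma_K(x)$. This follows at once from the defining inequality $|y_n-a_n|\leq|y_n-b|$, valid for all $b\in K$, which upon letting $n\to\infty$ (and using $a\in K$ since $K$ is closed) yields $|x-a|\leq|x-b|$ for all $b\in K$.

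For $x\in K$ the statement is trivial, since $\Sigma_K(x)=\{x\}$ gives $J_K(x)=0$ while $J_K\geq 0$ everywhere. So I would fix $x\in\R^d\setminus K$ and choose a sequence $y_n\to x$ realizing the lower limit, i.e.\ $J_K(y_n)\to L:=\liminf_{y\to x}J_K(y)$; since $\R^d\setminus K$ is open, $y_n\notin K$ for large $n$, so $J_K(y_n)$ is defined. As $\Sigma_K(y_n)$ is compact, its convex hull $\conv\Sigma_K(y_n)$ is compact and convex, so there is a nearest point $s_n\in\conv\Sigma_K(y_n)$ with $|y_n-s_n|=\dist(y_n,\conv\Sigma_K(y_n))=J_K(y_n)$.

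By Carathéodory's theorem I can write each $s_n$ as a convex combination of at most $d+1$ points of $\Sigma_K(y_n)$:
$$s_n=\sum_{i=1}^{d+1}\lambda_i^n a_i^n,\qquad a_i^n\in\Sigma_K(y_n),\ \lambda_i^n\geq 0,\ \sum_{i=1}^{d+1}\lambda_i^n=1.$$
All the $a_i^n$ lie in the compact set $K$ and all the weights $\lambda_i^n$ lie in $[0,1]$, so after passing to a subsequence (which does not alter the value of the convergent sequence $J_K(y_n)\to L$) I may assume $a_i^n\to a_i$ and $\lambda_i^n\to\lambda_i$ for each $i$. By the closedness of the nearest-point correspondence noted above, $a_i\in\Sigma_K(x)$ for every $i$, while the limit weights still satisfy $\lambda_i\geq 0$ and $\sum_i\lambda_i=1$; hence $s_n\to s:=\sum_{i=1}^{d+1}\lambda_i a_i\in\conv\Sigma_K(x)$. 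Passing to the limit finally gives
$$L=\lim_{n\to\infty}|y_n-s_n|=|x-s|\geq\dist(x,\conv\Sigma_K(x))=J_K(x),$$
which is the asserted lower semicontinuity.

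The one genuinely delicate point is the one isolated above: one cannot pass to the limit in $\conv\Sigma_K(y_n)$ directly, because the convex hull of the nearest-point set is only \emph{upper} semicontinuous as a set-valued map. The role of Carathéodory's theorem is precisely to reduce each convex combination to a fixed finite number of terms, so that compactness of $K$ and of the weight simplex, together with the closedness of $\Sigma_K$, can be combined to produce a limit point lying in $\conv\Sigma_K(x)$. Everything else is routine.
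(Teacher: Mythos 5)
Your proof is correct and follows essentially the same route as the paper: both arguments rest on the closed-graph (upper semicontinuity) property of the nearest-point map $x\mapsto\Sigma_K(x)$, which you establish exactly as the paper does. The only difference is in one step: where the paper passes from $\Sigma_K$ to its convex hull by citing continuity of the convex-hull operation in the Hausdorff metric, you do this by hand via Carath\'eodory's theorem and a subsequence extraction, which is a sound and somewhat more self-contained substitute.
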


\begin{proof}
First we show that the set-valued function $x\mapsto\Sigma_K(x)$ is upper semicontinuous (w.r.t.\ Hausdorff metric), i.e., that $\limsup_{y\to x}\Sigma_K(y)\subset\Sigma_K(x)$. (This was proved in \cite[Lemma~5.1]{RSM09}. We repeat the short argument here for the convenience of the reader.)
Let $x_n\to x$, $a_n\in\Sigma_K(x_n)$, $a_n\to a$. We will show that $a\in\Sigma_K(x)$. If not, there would be another point $b\in K$ with $|b-x| < |a-x|$. Let $n$ be sufficiently large that
$$\max\{|a_n-a|, |x_n-x|\} < \ep:=\frac 13 (|a-x|-|b-x|).$$
Then, by the triangle inequality,
\begin{align*}
|x_n-b| &\leq |x_n-x| + |b-x| < \ep + |a-x|-3\ep\\
&< |a-x|-|a-a_n|-\ep \leq |a_n-x|-\ep\\
&< |a_n-x|-|x-x_n| \leq |a_n-x_n|,
\end{align*}
which means that $a_n$ is not the closest point of $K$ to $x_n$, a contradiction.

Since the convex hull is a continuous operation on compact sets, also
$$\limsup_{y\to x}\conv(\Sigma_K(y)-y)\subset\conv(\Sigma_K(x)-x).$$
This means that for any $\ep>0$ there exists $\delta>0$ such that

$$\conv(\Sigma_K(y)-y)\subset(\conv(\Sigma_K(x)-x))(\ep),\quad |y-x|<\delta,$$
which clearly implies that $J_K(y)\geq J_K(x)-\ep$ whenever $|y-x|<\delta$.
\end{proof}

\begin{proof}[Proof of Theorem~\ref{P_cont}]
For $k=d$ see Remark \ref{rems: large k}.

Assume now that $k\leq d-1$. Since the set of regular values of $d_K$ is open, there exists an $\ep>0$ such that for any $r\in(r_0-\ep,r_0+\ep)$, $r$ is a regular value of $d_K$, $\reach\widetilde{K(r)}>0$ and \eqref{reflect} holds. Thus, in order to prove the statement, it is enough to show that
$$\lim_{r\to r_0} C_k(\widetilde{K(r)},\cdot)=C_k(\widetilde{K(r_0)},\cdot).$$
Since clearly $K(r)\to K(r_0)$ in the Hausdorff distance as $r\to r_0$, it will be enough to show that
\begin{equation} \label{liminf}
\liminf_{r\to r_0}\reach\widetilde{K(r)}>0,
\end{equation}
and apply \cite[\S5.9]{Fe59}. The function $J_K$ is positive on $\partial K(r_0)$ and, using Lemma~\ref{semicont}, we obtain that for any $x\in\partial K(r_0)$ there exists a $\delta(x)>0$ such that $J_K(y)>J_K(x)/2$ whenever $y\in U(x,\delta_x)$. By the compactness of $\partial K(r_0)$, we easily find an $\eta>0$ and an open set $U\subset\partial K(r_0)$ such that $J_K>\eta$ on $U$. Since $\partial K(r)\subset U$ for $r$ sufficiently close to $r_0$, we conclude that
$$\liminf_{r\to r_0}\,\inf\{J_K(x):\, x\in\partial K(r)\}\geq\eta,$$
and the proof is completed by applying Lemma~\ref{L1}.
\end{proof}
At the end we show the measurability property used in the proof of Lemma \ref{lem:measurability}.
By a \emph{random signed measure} we understand a mapping $\mu$ from a probability space into the space of locally finite signed Borel measures such that $\mu(B)$ is a random variable for any bounded Borel set $B$. Recall that the space $\K$ of nonempty compact sets was provided with the Borel $\s$-algebra determined by the Hausdorff distance.
\begin{lems}  \label{L_prod_meas}
Let $\mu=\mu^\omega$ be a random signed measure. Then the mapping
$$\Phi: (\omega,K)\mapsto\mu^\omega(K),\quad (\omega,K)\in\Omega\times\K$$
is jointly measurable.
\end{lems}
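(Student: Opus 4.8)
The plan is to approximate the compact set $K$ from outside by finite unions of dyadic cubes meeting it, to establish joint measurability at each such discretisation level, and then to pass to the limit using continuity from above of $\mu^\omega$. The point of the discretisation is that the measure of such a union splits into a countable sum of products, each being a function of $\omega$ alone times a function of $K$ alone.

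For $m\in\N$ let $\mathcal D_m$ be the partition of $\rd$ into the half-open dyadic cubes $Q=\prod_{i=1}^d[a_i,a_i+2^{-m})$ whose corners $a_i$ are integer multiples of $2^{-m}$, and put
\[
 K^{(m)}:=\bigcup\{Q\in\mathcal D_m:\ Q\cap K\neq\emptyset\}.
\]
Since the cubes in $\mathcal D_m$ are pairwise disjoint and a compact set meets only finitely many of them,
\[
 \mu^\omega(K^{(m)})=\sum_{Q\in\mathcal D_m}\mu^\omega(Q)\,\1\{Q\cap K\neq\emptyset\}.
\]
Here each factor $\omega\mapsto\mu^\omega(Q)$ is a random variable, because $Q$ is a bounded Borel set and $\mu$ is a random signed measure; viewed on $\Omega\times\K$ it is measurable as it factors through the projection onto $\Omega$. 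The factor $K\mapsto\1\{Q\cap K\neq\emptyset\}$ is Borel on $(\K,d_H)$: writing $Q=\bigcup_n Q_n$ with the closed cubes $Q_n:=\prod_{i=1}^d[a_i,a_i+2^{-m}-1/n]$, compactness of $K$ yields
\[
 \{K\in\K:\ K\cap Q\neq\emptyset\}=\bigcup_{n}\{K\in\K:\ K\cap Q_n\neq\emptyset\},
\]
and each set on the right is closed for the Hausdorff metric (if $K_j\to K$ and $x_j\in K_j\cap Q_n$, then a convergent subsequence of $(x_j)$ has its limit in $K\cap Q_n$), so the hitting set of $Q$ is an $F_\sigma$ and hence Borel. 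Thus every summand is a product of two jointly measurable functions and is therefore jointly measurable. To control the countable summation I localise: on the measurable set $\{K\subset B(0,j)\}$ only the finitely many cubes $Q\subset B(0,j+\sqrt d)$ can contribute, so there $\mu^\omega(K^{(m)})$ reduces to a finite sum of jointly measurable functions; since $\K=\bigcup_{j\in\N}\{K\subset B(0,j)\}$ is a measurable cover, $\mu^\omega(K^{(m)})$ is jointly measurable on all of $\Omega\times\K$.

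It remains to let $m\to\infty$. The cubes meeting $K$ at level $m+1$ subdivide those at level $m$, whence $K^{(m+1)}\subset K^{(m)}$, and $\bigcap_m K^{(m)}=K$, because any $x\notin K$ lies in a cube disjoint from $K$ as soon as $2^{-m}\sqrt d<d(x,K)$. Hence $K^{(m)}\downarrow K$. Since $K^{(1)}$ is bounded and $\mu^\omega$ is locally finite, $|\mu^\omega|(K^{(1)})<\infty$, so applying continuity from above to the positive and negative parts of $\mu^\omega$ separately gives $\mu^\omega(K^{(m)})\to\mu^\omega(K)$ for every $(\omega,K)$. Therefore $\Phi(\omega,K)=\lim_{m\to\infty}\mu^\omega(K^{(m)})$ is a pointwise limit of jointly measurable functions and is jointly measurable. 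I expect the discretised step to be the main obstacle: one must recognise the splitting of $\mu^\omega(K^{(m)})$ into $\omega$-factors times $K$-factors, verify Borel measurability of the hitting functionals, and keep the countable sum under control by localising to bounded $K$; the passage to the limit is then routine, relying only on the finiteness of the variation of $\mu^\omega$ on bounded sets.
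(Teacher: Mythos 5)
Your proof is correct, but it takes a genuinely different route from the paper. The paper first establishes that $(\omega,f)\mapsto\int f\,d\mu^\omega$ is jointly measurable on $\Omega\times\C_c$ as a Carath\'eodory function (measurable in $\omega$, continuous in $f$, citing Aliprantis--Border), then composes with the continuous map $K\mapsto f_{K,n}:=(1-nd_K)^+$ (which is $n$-Lipschitz with respect to $d_H$) and lets $n\to\infty$, using $f_{K,n}\downarrow\1_K$ and dominated convergence for $|\mu^\omega|$. You instead discretise: $\mu^\omega(K^{(m)})$ splits over the dyadic partition into a countable sum of products of an $\omega$-measurable factor $\mu^\omega(Q)$ and a $K$-measurable hitting indicator, and you conclude by continuity from above along $K^{(m)}\downarrow K$. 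Your verification of the Borel measurability of the hitting functionals (exhausting the half-open cube by closed cubes and checking closedness of their hitting sets under $d_H$-convergence) is sound, and the localisation to $\{K\subset B(0,j)\}$ handles the countable sum (though this is not strictly needed, since for each fixed $K$ only finitely many summands are nonzero and a pointwise-convergent countable sum of measurable functions is measurable). What each approach buys: the paper's argument is shorter once the Carath\'eodory lemma is available and immediately yields joint measurability of $(\omega,K)\mapsto\int g\,f_{K,n}\,d\mu^\omega$ for other integrands; yours is more elementary and self-contained, requiring only the definition of a random signed measure and basic properties of the Hausdorff metric, at the cost of more combinatorial bookkeeping. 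Both arguments finish with the same device, exhibiting $\Phi$ as a pointwise limit of jointly measurable functions.
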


\begin{proof}
Note that the mapping
$$\Psi: (\omega,f)\mapsto\int f\, d\mu^\omega,\quad (\omega,f)\in\Omega\times\C_c$$ is jointly measurable ($\C_c$ is the space of continuous functions with compact support with supremum metric). This follows from the fact that $\omega\mapsto\int f\, d\mu^\omega$ is measurable and $f\mapsto\int f\, d\mu^\omega$ is continuous, hence $\Psi$ is a Caratheodory function, which is always jointly measurable, see \cite[Lemma~4.51]{AB06}.

Further, given $K\in\K$ and $n\in\N$, consider the function
$$f_{K,n}: x\mapsto (1-nd_K(x))^+,\quad x\in\R^d.$$
Since for any $K,K'\in\K$ we have $|f_{K,n}(x)-f_{K',n}(x)|\leq nd_H(K,K')$, the mapping $\varphi_n: K\mapsto f_{K,n}$ is continuous from $\K$ to $\C_c$. Thus, the mapping $\tilde{\varphi}_n: (\omega,K)\mapsto(\omega,f_{K,n})$, as well as the composition $\Phi_n:=\Psi\circ\tilde{\varphi}_n$, are measurable on the product space $\Omega\times\K$. Since $\Phi_n(\omega,K)=\int f_{K,n}\, d\mu^\omega\to \mu^\omega(K)=\Phi(\omega, K)$, $n\to\infty$, for any $(\omega, K)$, the limit function $\Phi$ is jointly measurable as well.
\end{proof}


\end{document}